\documentclass[10pt,a4paper]{article}

\usepackage[hidelinks]{hyperref}
\usepackage{lineno}

\usepackage{amssymb, bm}
\usepackage{amsmath}
\usepackage{amsthm}
\usepackage{mathrsfs}
\usepackage{siunitx}
\usepackage[margin=2cm]{geometry}
\usepackage{subfig}
\usepackage{float}
\usepackage{xcolor}
\usepackage{subfloat}

\usepackage{mathtools}
\usepackage{bbm}
\usepackage{multirow}
\usepackage{stmaryrd}

\usepackage{algorithm}     
\usepackage{algpseudocode} 

\usepackage[utf8]{inputenc} 
\usepackage[T1]{fontenc}

\usepackage{tikz}
\usetikzlibrary{decorations.markings}
\usetikzlibrary{shapes.geometric}
\usetikzlibrary{shapes.arrows}
\usetikzlibrary{fit,calc}
\usetikzlibrary{quotes,angles}
\usetikzlibrary{arrows.meta}
\usepackage{animate}  
\usepackage{graphicx}
\usepackage{pgf}

\usepackage{pgfplots}
\usepackage[normalem]{ulem}

\bibliographystyle{elsarticle-num}

\modulolinenumbers[5]

\newtheorem{remark}{Remark}
\newtheorem{lemma}{Lemma}

\newcommand{\vertiii}[1]{{\left\vert\kern-0.25ex\left\vert\kern-0.25ex\left\vert #1 
   \right\vert\kern-0.25ex\right\vert\kern-0.25ex\right\vert}}

\usepackage{geometry}
\geometry{a4paper,
 total={170mm,257mm},
 left=20mm,
 top=20mm,
 }

\makeatletter
\def\blfootnote{\xdef\@thefnmark{$\star$}\@footnotetext}
\makeatother
\newenvironment{Authors}%
  {\begin{center}\begin{bfseries}}%
  {\end{bfseries}\end{center}}
\newenvironment{Addresses}%
  {\begin{flushleft}\begin{itshape}}%
  {\end{itshape}\end{flushleft}}
  \newcommand{\email}[1]{\hspace*{\stretch{1}}\emph{\texttt{#1}}}

 \usepackage{fancyhdr}  
  \fancypagestyle{plain}{
\fancyhead{}
\fancyhead[C]{\hfill Submitted to Elsevier, October 2023}
 }
\begin{document}

\thispagestyle{plain}

\title{A non-overlapping optimization-based domain decomposition approach to component-based model reduction of incompressible flows}
 \date{}
 \maketitle

 \maketitle
\vspace{-50pt} 
 
\begin{Authors}
Tommaso Taddei$^{1}$,  Xuejun Xu$^{2,3}$,
Lei Zhang$^2$.
\end{Authors}

\begin{Addresses}
$^1$
Univ. Bordeaux, CNRS, Bordeaux INP, IMB, UMR 5251, F-33400 Talence, France\\ Inria Bordeaux Sud-Ouest, Team MEMPHIS, 33400 Talence, France, \email{tommaso.taddei@inria.fr} \\[3mm]
$^2$
School of Mathematical Sciences, Tongji University, Shanghai 200092 , China, \email{22210@tongji.edu.cn} \\[3mm]
$^3$
Institute of Computational Mathematics, AMSS, Chinese Academy of Sciences, Beijing 100190, China, \email{xxj@lsec.cc.ac.cn} \\
\end{Addresses}




\begin{abstract}
We present a component-based model order reduction procedure to efficiently and accurately solve parameterized incompressible flows governed by the Navier-Stokes equations. Our approach leverages a non-overlapping optimization-based domain decomposition technique to determine the control variable that minimizes jumps across the interfaces between sub-domains.
To solve the resulting constrained optimization problem, we propose both Gauss-Newton and sequential quadratic programming methods, which effectively transform the constrained problem into an unconstrained formulation.
Furthermore, we integrate model order reduction techniques into the optimization framework, to speed up computations. In particular, we incorporate localized training and adaptive enrichment to reduce the burden associated with the training of the local reduced-order models.
Numerical results are presented to demonstrate the validity and effectiveness of the overall methodology. 
\end{abstract}

\noindent
\emph{Keywords:} 
component-based model order reduction; optimization-based domain decomposition; non-overlapping methods; Navier-Stokes equations.
\medskip

\newcommand{\bs}[1]{\boldsymbol{#1}}


\section{Introduction}
Parameterized model order reduction (pMOR) techniques \cite{Quarteroni2015_ROMbook,Hesthaven_ROM_book2016,rozza2008reduced,benner_model_order_reduction_vol2} have gained widespread popularity in   science and engineering  to reduce the computational cost in scenarios that involve repetitive computational tasks, such as many-query and real-time applications. Given the parameter domain $\mathcal{P}$ and {a} parameterized partial differential equation (PDE) {of interest}, pMOR strategies rely on an offline/online computational decomposition: in the offline stage, which is computationally expensive and performed only once, a reduced basis (RB) approximation space is generated by exploiting several high-fidelity (HF) solutions (e.g., finite element, finite volume)  to the parameterized PDE for properly chosen parameter values, and a reduced order model (ROM) is then devised; in the online stage, for any new parameter value, the ROM can be solved with computational cost  independent of the HF discretization size $N_{\rm hf}$, {to ensure} significant computational savings. 
Efficient training algorithms, such as proper orthogonal decomposition (POD, \cite{berkooz1993proper,volkwein2011model}) and the weak-Greedy algorithm \cite{rozza2008reduced} are available to construct the reduced order basis (ROB). Additionally, effective projection-based techniques \cite{Carlberg2011,carlberg2017galerkin} can be employed to devise ROMs that are suitable for online calculations.

{The combination of} RB methods and domain decomposition (DD) methods offers further advantages \cite{Huynh_Knezevic_Patera_2013,Maier_Haasdonk_2014,Barnett2022}.
First,  
localized  pMOR techniques do not require global HF solutions over the whole domain: this feature has the potential to dramatically reduce the offline computational burden for large-scale systems.
Second, localization simplifies the task of defining a parameterization of the problem and enables model  reduction of systems with parameter-induced \emph{topology changes} (cf. section \ref{sec:model_pb}).
Third,
the DD framework offers the flexibility to seamlessly integrate ROMs with full order models (FOMs, generated by the HF discretization) or to accommodate multi-physics applications based on independent software.

Various approaches have been proposed to combine RB methods and DD methods {which differ in the way local ROMs are coupled at components' interfaces}. 
In the reduced basis element (RBE) method \cite{Maday_Ronquist_2002,Maday_Ronquist_2004,Lovgren_Maday_Ronquist_2006}, local ROMs are glued together using Lagrange multipliers. This method has been introduced in the context of the Laplace equation \cite{Maday_Ronquist_2002,Maday_Ronquist_2004} and subsequently applied to the Stokes equations \cite{Lovgren_Maday_Ronquist_2006}. A more recent application of the RBE method to the unsteady 3D Navier-Stokes equations can be found in \cite{Pegolotti2021}, where a spectral Lagrange multiplier on the 2D interfaces is employed to couple local solutions. 
Another approach is the static condensation RBE (scRBE) method \cite{Huynh_Knezevic_Patera_2013,Eftang_Patera_2013,benaceur2022port}, which ensures the component coupling through a static condensation procedure \cite{Craig1968}. Additionally, approximation spaces for the interfaces (ports) between the components are also constructed \cite{Eftang_Patera_2013,benaceur2022port} to further reduce the computational complexity associated with the static condensation system. Another advantage of the scRBE method is the interchangeability of the components, {which enables} the study of different systems from a single  library of {parameterized archetype components}. 
The RB-DD-finite-element (RDF) method \cite{Iapichino_Quarteroni_Rozza_2016} uses parametric boundary conditions in the local problems to define versatile local RB spaces for handling of networks composed of repetitive geometries characterized by different parameters. A detailed review of  these methods can be found in \cite{Iapichino_Quarteroni_Rozza_2016}. 

Iterative techniques based on substructuring and the Schwarz alternating methods \cite{quarteroni1999domain,mota2017schwarz} have   been adapted to the  pMOR framework  \cite{Buffoni_Telib_Iollo_2009,Maier_Haasdonk_2014,Barnett2022,
Zappon_Manzoni_Gervasio_Quarteroni_2022,Discacciati_Hesthaven_2023}. In \cite{Buffoni_Telib_Iollo_2009}, both {a} non-overlapping Dirichlet–Neumann iterative scheme and a  Schwarz method for overlapping sub-domains are proposed to ensure coupling between the FOM and the ROM. The coupling is achieved by ensuring the solution compatibility between the FOM solution trace and ROM solution trace at the interface. Specifically, only Galerkin-free ROMs are considered in the work of \cite{Buffoni_Telib_Iollo_2009}. 
Galerkin-based ROMs are explored in the context of DD in \cite{Barnett2022}, the authors develop a versatile coupling framework for both FOM-ROM coupling and ROM-ROM coupling,
{which can be applied}  to both overlapping and non-overlapping domains. 
Similarly, in \cite{Maier_Haasdonk_2014}   Galerkin-based ROMs  are employed {to speed up  Dirichlet-Neumann DD iterations}. 
A Dirichlet-Neumann DD-ROM is developed in \cite{Zappon_Manzoni_Gervasio_Quarteroni_2022} to handle non-conforming interfaces. Here, the Dirichlet and Neumann interface data are transferred using the INTERNODES method \cite{Deparis2016}. In \cite{Discacciati_Hesthaven_2023}, the authors present a DD-ROM technique which is designed for heterogeneous systems:  in this approach, components are treated separately, and a parametrization of the interface data is used to generate HF snapshots. 

 {Moreover, several authors have  proposed to formulate the coupling problem as a minimization statement \cite{Bergmann_2018,Iollo_Sambataro_Taddei_2023}. 
In \cite{Bergmann_2018}, the optimization problem is framed as the minimization of the difference between the ROM reconstruction and the corresponding FOM solution within the overlapping region between the ROM and the FOM domain. This approach adopts Galerkin-free ROMs and is applied to approximating  incompressible flows,  such as the interaction between an airfoil and a vortex, and the incompressible turbulent flow past a vehicle with varying geometry. 
The one-shot overlapping Schwarz method \cite{Iollo_Sambataro_Taddei_2023} consists in a constrained optimization statement that penalizes the jump at the interfaces of the components, while adhering to the approximate fulfillment of the PDE within each sub-domain. This approach
has been validated for  a steady nonlinear mechanics problem  and also applied to an unsteady  nonlinear mechanics problem with internal variables \cite{sambataro2022component}, in combination with overlapping partitions. 
The results of \cite{Iollo_Sambataro_Taddei_2023} showed that the minimization framework, which enables the application of effective optimization solvers for nonlinear least-square problems, ensures rapid convergence to the solution and is also robust with respect to the overlapping size.

In the present work, we aim to extend the method of
\cite{Iollo_Sambataro_Taddei_2023}  to incompressible flows in non-overlapping domains: our point of departure is the variational formulation proposed in  
 \cite{Gunzburger_Peterson_Kwon_1999} and further developed in  \cite{Gunzburger_Heinkenschloss_Lee_2000,Gunzburger_Lee_2000_elliptic,Gunzburger_Lee_2000}. 
As in \cite{Gunzburger_Peterson_Kwon_1999},
we formulate the DD problem as an optimal  control problem where the control is given by the flux on the components' interfaces and the dependent variables are velocity and  pressure in each subdomain;
our formulation reads as  a constrained minimization problem where 
the objective functional measures the jump in the dependent variables  across the
common boundaries between subdomains,  while the constraints are the partial differential
equations in each subdomain. 
We modify the formulation of  \cite{Gunzburger_Peterson_Kwon_1999}
to incorporate  an auxiliary control variable for the continuity equation which 
weakly
ensures continuous finite-dimensional pressure across the interface; 
furthermore, we propose a specialized
sequential quadratic programming (SQP) method to efficiently solve the optimization problem without resorting to  Lagrange multipliers.
We remark that non-overlapping   techniques are of particular interest for 
\emph{heterogeneous} DD \cite{Quarteroni1992} tasks that  necessitate the 
 combination of different discretization methods  in each subdomain. Non-overlapping methods are also of interest for 
 interface problems with high-contrast coefficients \cite{Gorb_Kurzanova_2018} and for 
  fluid flows in repetitive networks 
  \cite{Pegolotti2021,Iapichino_Quarteroni_Rozza_2016,benaceur2022port}
  such as the vascular system.

We here consider two-dimensional steady-state simulations at moderate Reynolds number; however, our ultimate goal is to devise a flexible computational tool to simulate vascular flows in real, patient-specific geometries. We interpret complex networks as the union of a small number of parameterized components. 
In order to avoid expensive global solves at training stage, we propose a combined localized training and global enrichment strategy that exclusively uses local HF solves to create local approximations for the archetype components, thus avoiding the need for computationally demanding global HF solves during the training phase.


Our work is related to several previous contributions to component-based (CB) pMOR.
First, 
the variational formulation  is strongly related to the recent work by Prusak et al.  \cite{prusak2023optimisation}. The authors of  \cite{prusak2023optimisation} consider separate spaces for velocity and pressure and rely on pressure supremizer enrichment in combination with Galerkin projection to ensure stability of the local problems; furthermore, they resort to a Lagrangian multiplier and gradient-based methods as in \cite{Gunzburger_Peterson_Kwon_1999} to solve the global optimization problem. Instead, we consider a single reduced space for velocity and pressure; we rely on both the Galerkin projection and a Petrov-Galerkin formulation for the local problems; and we rely on the Gauss-Newton and SQP methods for optimization without resorting to Lagrange multipliers.
 Finally, the authors of \cite{prusak2023optimisation}  do not discuss the problem of localized training, which is of paramount importance for the success of CB techniques.
 Second, we  emphasize that several authors have previously developed 
 CB-pMOR methods for incompressible flows in  repetitive geometries \cite{Iapichino_Quarteroni_Rozza_2016,benaceur2022port}; in particular,  the work by Pegolotti and coauthors \cite{Pegolotti2021} first considered a CB-pMOR for the unsteady incompressible Navier-Stokes equations in realistic three-dimensional  geometries.
 Third, the localized training and global enrichment  strategies are  an extension of the method proposed in 
  \cite{Smetana2023}:   localized training strategies have been previously proposed  in 
 \cite{Eftang_Patera_2013,benaceur2022port,Discacciati_Hesthaven_2023};
 similarly,  enrichment techniques have been considered 
  in several efforts for linear elliptic PDEs (see, e.g.,  \cite{buhr2017arbilomod}).
}
 
 This paper is organized as follows. In section \ref{sec:incomp_N-S}, we introduce the optimization-based domain decomposition method and the model problem considered in this work.
  In section \ref{sec:FOM}, we review the 
variational formulation  introduced in 
\cite{Gunzburger_Peterson_Kwon_1999};  we present our new formulation; and 
we discuss the solution method based on Gauss-Newton and sequential quadratic programming.
Then in section \ref{sec:rom} we discuss the integration of projection-based ROMs into the proposed optimization framework and the hybrid solver that combines both the FOM solver and the ROM solver. 
In sections \ref{sec:FOM} and 
 \ref{sec:rom}  we illustrate the method for a simplified geometric configuration with two components.
Section \ref{sec:loc_train} is dedicated to the presentation of  the localized training and the  {adaptive} enrichment techniques. 
Finally, in section \ref{sec:numerical_res}, we present numerical results that validate the effectiveness of our   methodology.

\section{Optimization-based domain decomposition method for the Navier-Stokes equations}
\label{sec:incomp_N-S}
In this work, we consider the incompressible Navier-Stokes equations:
\begin{equation}
\label{eq:incompressible_strong}
\left\{
\begin{array}{ll}
-\nu \Delta \mathbf{u} + ( \mathbf{u}\cdot \nabla)\mathbf{u} + \nabla p
= \mathbf{f}
& {\rm in} \; \Omega, \\
\nabla \cdot \mathbf{u} = 0 
& {\rm in} \; \Omega, \\
\mathbf{u}|_{\Gamma_{\rm dir}}=\mathbf{u}_{\rm in},
\;\;
\mathbf{u}|_{\Gamma_{\rm dir}^0}=0,
\;\;
\left(
\nu \nabla \mathbf{u} - p\mathbf{I}
\right) \mathbf{n}
|_{\Gamma_{\rm neu}}=0,
&
\\
\end{array}
\right.
\end{equation}
where $\nu>0$ denotes the kinematic viscosity of the fluid, 
$\Omega$ is a bounded Lipschitz domain;
{the open sets} 
$\Gamma_{\rm dir},\Gamma_{\rm dir}^0,\Gamma_{\rm neu}$
{constitute} 
 a partition of $\partial \Omega$, {which are associated to}   non-homogeneous Dirichlet boundary conditions, homogeneuous Dirichlet boundary conditions and Neumann boundary conditions,
 respectively.
We consider two-dimensional problems; the extension to the three-dimensional case and to unsteady problems is beyond the scope of this paper.

\subsection{Optimization-based domain decomposition}
\label{sec:opt_based_dd_continuous}

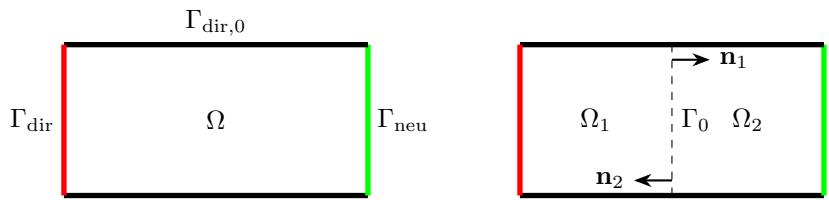
\begin{figure}[H]
\centering
\begin{tikzpicture}

\draw[black, line width=2pt] (0, 0) -- (4, 0); 
\draw[green, line width=2pt] (4, 0) -- (4, 2); 
\draw[black, line width=2pt] (4, 2) -- (0, 2); 
\draw[red, line width=2pt] (0, 2) -- (0, 0); 

\node at (2, 1) {$\Omega$};
\node[left] at (0, 1) {$\Gamma_{\rm dir}$};
\node[above] at (2, 2) {$\Gamma_{\rm dir,0}$};
\node[right] at (4, 1) {$\Gamma_{\rm neu}$};

\draw[black, line width=2pt] (6, 0) -- (10, 0); 
\draw[green, line width=2pt] (10, 0) -- (10, 2); 
\draw[black, line width=2pt] (10, 2) -- (6, 2); 
\draw[red, line width=2pt] (6, 2) -- (6, 0); 

\draw[dashed] (8, 0) -- (8, 2);
\node[right] at (8, 1) {$\Gamma_{0}$};
\node at (7, 1) {$\Omega_1$};
\node at (9, 1) {$\Omega_2$};

\draw[thick, -Stealth, black] (8,1.8) -- (8.5,1.8);
\node[right] at (8.5, 1.8) {$\mathbf{n}_1$};

\draw[thick, -Stealth, black] (8,0.2) -- (7.5,0.2);
\node[left] at (7.5, 0.2) {$\mathbf{n}_2$};

\end{tikzpicture}
\caption{The domain $\Omega$ and a partition into two non-overlapping sub-domains.}
\label{fig:two-subdomains}
\end{figure}

For the purpose of clarity, we introduce the optimization-based domain decomposition method in the case of two sub-domains. Note that this approach can be readily extended to accommodate many sub-domains, as  discussed in the subsequent sections. Consider a non-overlapping partition of $\Omega$ into two open sub-domains $\Omega_1$ and $\Omega_2$ such that $\overline{\Omega}=\overline{\Omega}_1\cup \overline{\Omega}_2$, as illustrated in Figure \ref{fig:two-subdomains}. The interface that separates the two sub-domains is denoted by $\Gamma_0$ so that ${\Gamma_0}=\overline{\Omega}_1\cap \overline{\Omega}_2$. The vectors $\mathbf{n}_i$, $i=1,2$, are the unit outward normals of $\Omega_i$ on $\Gamma_0$ (we thus have  $\mathbf{n}_1=-\mathbf{n}_2$). 
We define the local Dirichlet and Neumann conditions
for each component $\Omega_i$, $i=1,2$ {as}
\begin{subequations}
\label{eq:local_problems}
\begin{equation}
\label{eq:local_problems_a}
\Gamma_{i,\rm dir} = 
\Gamma_{\rm dir} \cap \partial \Omega_i,
\quad
 \Gamma_{i,\rm dir}^0 = 
\Gamma_{\rm dir}^0 \cap \partial \Omega_i,
\quad
 \Gamma_{i,\rm neu} = 
\Gamma_{\rm neu} \cap \partial \Omega_i,
\end{equation}
and the spaces
\begin{equation}
\label{eq:local_problems_b}
\mathcal{X}_i:=
\left\{
(\mathbf{v},q)\in [H^1(\Omega_i)]^2 \times L^2(\Omega_i) \; : \;
\mathbf{v}|_{\Gamma_{i,\rm dir}^0} = 0
\right\},
\quad
\mathcal{X}_{i,0}:=\left\{
(\mathbf{v},q)\in \mathcal{X}_i \; : \; 
\mathbf{v}|_{\Gamma_{i,\rm dir}} = 0
\right\},
\quad
\mathcal{G}:=
[L^{2}(\Gamma_0)]^2.
\end{equation}

The local solution $(\mathbf{u}_i,p_i)\in \mathcal{X}_i$ is fully determined
 {by} the flux $\mathbf{g}$ at the interface $\Gamma_0$: as in \cite{Gunzburger_Peterson_Kwon_1999}, we thus refer to $\mathbf{g}$ as the control. 
Given the control  $\mathbf{g}\in \mathcal{G}$,
the velocity-pressure pair  
 $({\mathbf{u}}_i,\,{{p}}_i)$ 
  satisfies
$\mathbf{u}_i|_{\Gamma_{i,\rm dir}} = \mathbf{u}_{\rm in}|_{\Gamma_{i,\rm dir}}$ and
\begin{equation}
\label{eq:local_problems_c}
\mathcal{R}_i({\mathbf{u}}_i,\,{{p}}_i, \mathbf{v},\,q )
+ \mathcal{E}_i (\mathbf{g}, \mathbf{v})   = 0
\quad
\forall \, (\mathbf{v},\,q)\in \mathcal{X}_{i,0},
\end{equation}
where
\begin{equation} 
\label{eq:local_problems_d}
\mathcal{R}_i({\mathbf{u}}_i,\,{{p}}_i, \mathbf{v},\,q )=
\int_{\Omega_i}
\Big(
\nu \nabla \mathbf{u}_i : \nabla \mathbf{v} \, - \, p_i (\nabla \cdot \mathbf{v})
\, - \,  
q(\nabla \cdot \mathbf{u}_i)
\, - \,  
 \mathbf{f}_i\cdot \mathbf{v}
\Big)
\, dx,
\quad
\mathcal{E}_i (\mathbf{g}, \mathbf{v}) 
=
\,  (-1)^i\, 
\int_{\Gamma_{0}} \mathbf{g} \cdot \mathbf{v} \, dx, 
\end{equation}
for $i=1,2$. Here, the orientation of the flux $\mathbf{g}$ is chosen to be the same as $\mathbf{n}_1$, i.e., from $\Omega_1$ to $\Omega_2$; the choice of the orientation  is completely arbitrary. 
Note that an arbitrary choice of the control $\mathbf{g}$ does not guarantee that the local solutions $(\mathbf{u}_i,p_i)$ are solutions to \eqref{eq:incompressible_strong};
however, if    $( \mathbf{u}_1 - \mathbf{u}_2){\vert_{\Gamma_0}} =  0$, we find that the field $(\mathbf{u},p)$ such that
$ (\mathbf{u}\vert_{\Omega_1}, p \vert_{\Omega_1})$  and
$ (\mathbf{u}\vert_{\Omega_2}, p \vert_{\Omega_2})$  
 satisfy \eqref{eq:local_problems_c} is a weak solution to the global problem 
\eqref{eq:incompressible_strong}.
The \emph{optimal control} $\mathbf{g}$
should hence guarantee velocity equality at the interface $\Gamma_0$.
\end{subequations}

Gunzburger and coauthors
\cite{Gunzburger_Peterson_Kwon_1999,Gunzburger_Lee_2000} 
 proposed the  following optimization-based domain-decomposition formulation  to compute the desired control and the local solutions: 
\begin{equation}
\label{eq:lsq}
\min_{\substack{
(\mathbf{u}_1,p_1)\in \mathcal{X}_1; \\
(\mathbf{u}_2,p_2)\in \mathcal{X}_2; \\
\mathbf{g} \in \mathcal{G}} } \; 
 \frac{1}{2}\int_{\Gamma_0}\left\vert\mathbf{u}_1-\mathbf{u}_2\right\vert^2 dx + \frac{\delta}{2}\int_{\Gamma_0}\vert\mathbf{g}\vert^2 dx \quad
 {\rm s.t.} \;\;
 \left\{
 \begin{array}{l}
 \displaystyle{\mathcal{R}_i({\mathbf{u}}_i,\,{{p}}_i, \mathbf{v},\,q )
+ \mathcal{E}_i (\mathbf{g}, \mathbf{v})   = 0
\quad
\forall \, (\mathbf{v},\,q)\in \mathcal{X}_{i,0}}
\\[3mm]
\displaystyle{
\mathbf{u}_i|_{\Gamma_{i,\rm dir}} = \mathbf{u}_{\rm in}|_{\Gamma_{i,\rm dir}},}
  \\ 
 \end{array}
\right.
\; i=1,2.
\end{equation}
The second term in the objective function of \eqref{eq:lsq} is a regularizer that is designed to penalize controls of excessive size; the positive constant $\delta$ is chosen to control the relative importance of the two terms in 
the objective.
The proofs of the well-posedness of the optimization formulation, as well as the convergence of the optimal solution to the solution to  \eqref{eq:incompressible_strong} as the regularization parameter $\delta $  approaches $ 0$, can be found in \cite{Gunzburger_Lee_2000}.

\subsection{Model problem}
\label{sec:model_pb}
{As in \cite{Pegolotti2021}},  
we assume that the geometry of interest  can be effectively approximated  {through instantiations  of the elements of a library of archetype components; the instantiated components are obtained by low-rank geometric transformations of the archetype components}. 
{As in \cite{Pegolotti2021}, we consider  a library with  two archetype components:
``junction'' and ``channel'';}
the two archetype components are depicted in Figure \ref{fig:archetype_components}, where 
a number is assigned to each component edge. These edge numbers 
{indicate boundary face groups that are associated with the ports and the different types of  boundary conditions}.
Specifically, for the junction, edge numbers $\{1,4,7\}$ denote the ports and
edge numbers $\{2,3,5,6,8,9\}$ indicate homogeneous Dirichlet boundaries;
while for the channel, edge numbers $\{1,2\}$ represent the ports and edge numbers $\{3,4\}$ correspond to homogeneuous Dirichlet boundaries. 

A system can then be constructed by instantiating the two archetype components as follows:
$$
\overline{\Omega} = \mathop{\bigcup}\limits_{i=1}^{N_{\rm dd}} \overline{\Omega}_i, \quad \text{where} \quad \Omega_i= \Phi^{L_i}(\widetilde{\Omega}^{L_i},\mu_i),\quad i=1,\ldots,N_{\rm dd}, 
$$
where $L_i\in \{1,2\}$ denotes the label of the $i$-th component of the system, $\widetilde{\Omega}^{1}$, $\widetilde{\Omega}^{2}$ represent the two archetype components, $\Phi^{L_i}$ encompasses geometric transformations such as rotation, translation and non-rigid deformation that are applied to the archetype component  to obtain the corresponding instantiated component that appears in the  target system. The deformation
of the $i$-th component  is governed by the geometric parameter $\mu_i$;  the vector $\mu_i$ includes a scaling factor $\gamma$, the angle $\theta$ and a shift $\mathbf{x}_{\rm shift}$ that characterize the linear map that ensures the exact fitting of consecutive elements at ports. For the   junction component, the the vector $\mu_i$  also includes the angle $\alpha$, which represents the angle between the main vessel and the branch vessel,  as shown in Figure \ref{fig:archetype_components}(a); for the channel, the vector $\mu_i$ includes the constant $h_c$, which is used in the parameterization of the bottom boundary of the channel as $y=-h_c\,(4t\,(1-t))^{\alpha_c}$, with $t\in [0,1]$ and $\alpha_c=4$.

We prescribe a parabolic (Poiseuille) profile at the left boundary  $\mathbf{u}_{\rm in}$ and we prescribe homogeneous Neumann conditions at the other boundary ports. In conclusion, the complete system configuration is uniquely prescribed by (i) the component labels $\{ L_i \}_{i=1}^{N_{\rm dd}}$ and the geometric parameters $\mu={\rm vec}(\mu_1,\ldots,\mu_{N_{\rm dd}} )$, and
(ii) the Reynolds number $\text{Re}$ at the inlet.
We define the Reynolds number as $\text{Re}=\frac{Hu_0}{\nu}$, where $H=1$ denotes the diameter of the vessel at the inlet, $u_0$ represents the centerline velocity imposed at the inlet, and $\nu$ is the kinematic viscosity. 
In the numerical implementation, we set $\nu=\frac{1}{\text{Re}_{\rm ref}}$ in all the components of the network, and we consider the parametric inflow condition
$u_0(\text{Re}) =\frac{\text{Re}}{\text{Re}_{\rm ref}}$.

Figure \ref{fig:instantiation_example} illustrates two examples of target system, which consist of $3$ and $4$ components, respectively: the red numbers indicate the indices of the components, while the blue numbers indicate the internal ports. Note that the two systems are not isomorphic to each other: parameter variations hence induce \emph{topology changes} that prevent the application of standard monolithic pMOR techniques.

\begin{remark}
{We here observe that each component  includes mixed Dirichlet-Neumann boundary conditions:}  the presence of Neumann conditions prevents the problem of pressure indeterminacy (up to an additive constant), and the existence of Dirichlet conditions eliminates the need for any additional compatibility condition \cite{Gunzburger_Heinkenschloss_Lee_2000} concerning the control variable $\mathbf{g}$.
\end{remark}

\begin{figure}
  \centering
  
  \subfloat[junction.]{\includegraphics[width=7cm]{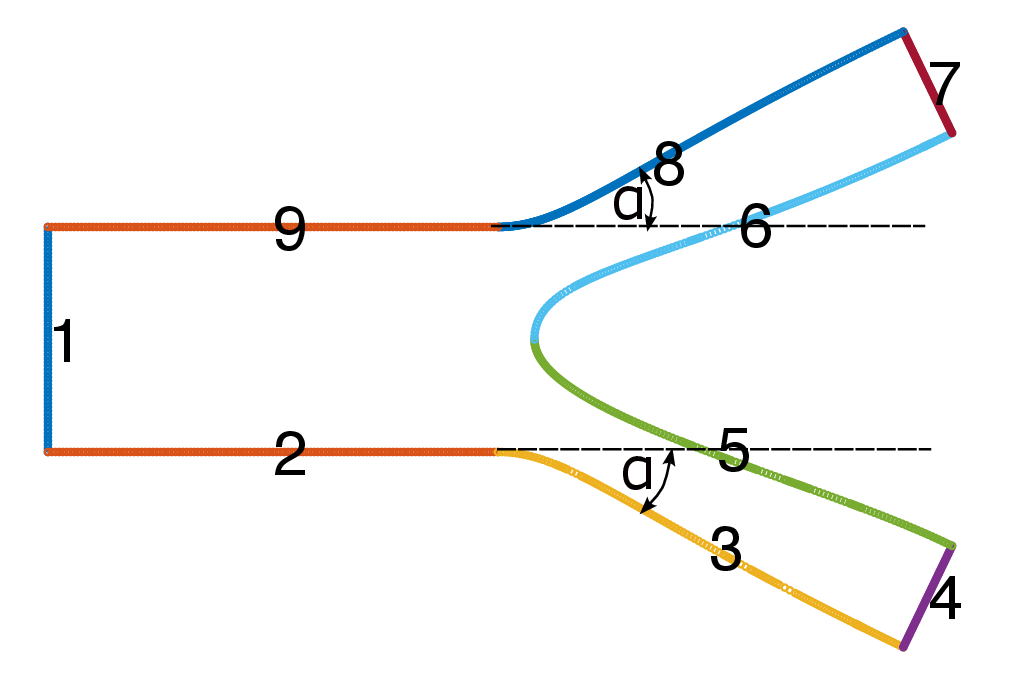}}
  \hfill
  \subfloat[channel.]{\includegraphics[width=7cm]{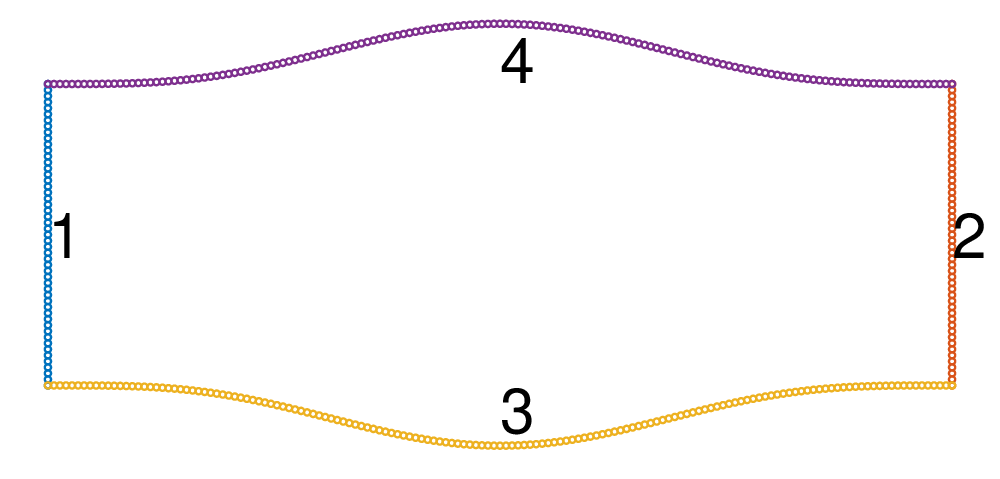}}
  
  \caption{archetype components.}
  \label{fig:archetype_components}
\end{figure}

\begin{figure}[H]
\centering
\subfloat[]{
\includegraphics[width=8.3cm]{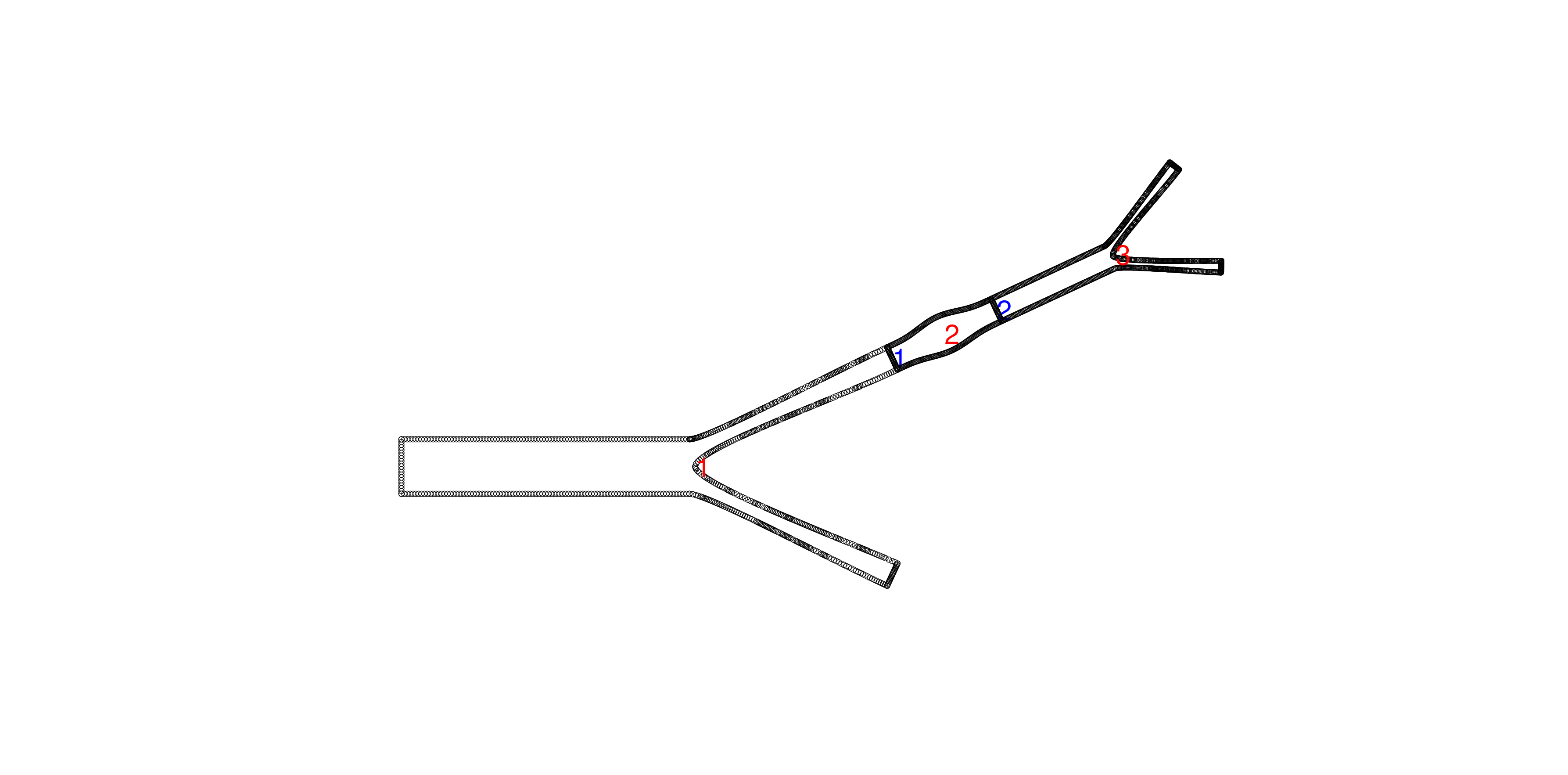}}
~~
\subfloat[]{
\includegraphics[width=7.3cm]{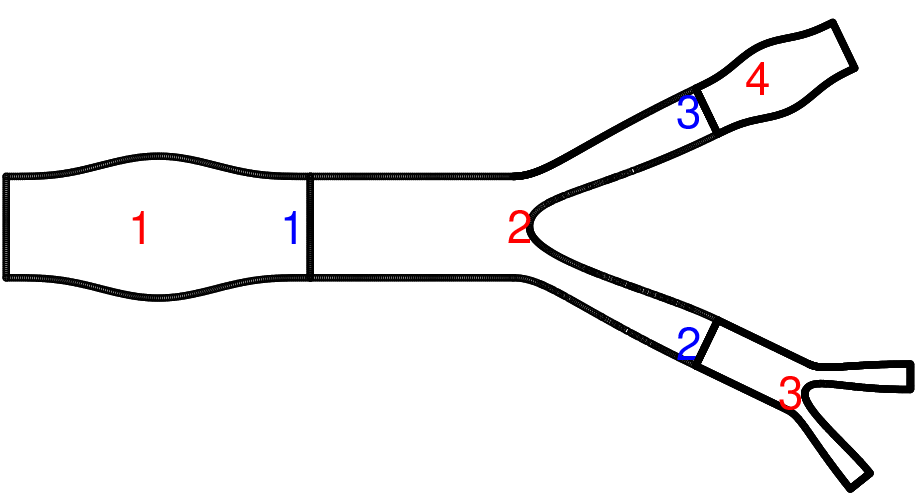}}

\caption{two example of target systems.}
\label{fig:instantiation_example}
\end{figure}

\begin{remark}
\label{remark:dirichlet_conditions}
We observe that the boundary face group 1 for the two archetype components either 
corresponds to an internal interface or to the inlet Dirichlet condition (for the first component of the network). In order to handle this scenario, we can either modify the objective function to include the extra-term
$\int_{\Gamma_{\rm dir}} | \mathbf{u} -  \mathbf{u}_{\rm in}  |^2 \, dx$ or to distinguish between inflow and internal channel and junction components. The latter option leads to a library with   ($N_{\rm c}=4$) archetype components. We here opt for the second strategy.
\end{remark}

\section{High-fidelity discretization}
\label{sec:FOM}

\subsection{Finite element spaces}
\label{sec:notation_discrete}
We proceed to discretize the optimization statement \eqref{eq:lsq}. Towards this end, we introduce the HF spaces
$
V_i^{\rm hf } \subset [H_{0,\Gamma_{i,\rm dir}^0}^1(\Omega_i)]^2$, 
$Q_i^{\rm hf } \subset  L^2(\Omega_i)$.
We further define the tensor product spaces
$\mathcal{X}_i^{\rm hf} = V_i^{\rm hf } \times Q_i^{\rm hf }$ and the lifted space 
$\mathcal{X}_{i,0}^{\rm hf} = V_{i,0}^{\rm hf } \times Q_i^{\rm hf }$
with $V_{i,0}^{\rm hf } = \{\mathbf{v}\in    V_{i}^{\rm hf }: \mathbf{v}|_{\Gamma_{i,\rm dir}} = 0\}$ for $i=1,2$. We denote by 
$\{ \bs{\varphi}_{i,j}  \}_{j=1}^{N_{i}^{\mathbf{u}}}$ a basis of $V_{i}^{\rm hf }$ and by $\{ {\psi}_{i,j}  \}_{j=1}^{N_{i}^{p}}$ a basis of $Q_i^{\rm hf }$; we use notation $\underline{\bullet}$ to indicate the FE vector associated with the FE field $\bullet$. We further define the trace spaces
$\Lambda_i^{\rm hf} = 
\{\tau_{\Gamma_0}  \mathbf{v} 
  :  \mathbf{v}\in V_{i}^{\rm hf } \}$ and
$\Xi_i^{\rm hf} = 
\{\tau_{\Gamma_0}  q :  q\in Q_{i}^{\rm hf } \}$, where 
$\tau_{\Gamma_0} \bullet := \bullet \vert_{{\Gamma_{0}}}$ indicates the trace of the field $\bullet$ on 
 $\Gamma_0$.  We here consider conforming meshes   such that nodes at the interface shared by the two sub-domains coincide, that is
$\Lambda_1^{\rm hf}=\Lambda_2^{\rm hf}=\Lambda^{\rm hf}$
and
$\Xi_1^{\rm hf}=\Xi_2^{\rm hf}=\Xi^{\rm hf}$;
this assumption is exploited in the    technical result of  \ref{appendix:pressure_jump}; 
 nevertheless, the  formulation can be trivially extended to non-conforming grids. We further define the global spaces
$\mathcal{X}^{\rm hf} = V^{\rm hf } \times Q^{\rm hf }$ and
$\mathcal{X}_0^{\rm hf} = V_0^{\rm hf } \times Q^{\rm hf }$ with 
 $V_{0}^{\rm hf } = \{\mathbf{v}\in    V^{\rm hf }: \mathbf{v}|_{\Gamma_{\rm dir}} = 0\}$.

In this work, we adopt a stabilized FE formulation that incorporates the Streamline Upwind/Petrov-Galerkin (SUPG) \cite{Tezduyar1991,Tezduyar2000} and the Pressure-Stabilized Petrov–Galerkin (PSPG) \cite{Hughes1986} stabilizations. The PSPG technique allows the  use of  the same polynomial degree for both pressure and velocity discretizations;  the SUPG technique enhances robustness for high Reynolds numbers. The detailed description of these stabilization formulas is given in \ref{sec:stablized_FE}.
In conclusion, we consider the following local problems, which are the counterpart of \eqref{eq:local_problems_c}:
\begin{subequations}
\label{eq:local_problems_tmp}
\begin{equation}
 \left\{
 \begin{array}{l}
 \displaystyle{\mathcal{R}_i^{\rm hf}({\mathbf{u}}_i,\,{{p}}_i, \mathbf{v},\,q )
+ \mathcal{E}_i (\mathbf{g}, \mathbf{v})   = 0
\quad
\forall \, (\mathbf{v},\,q)\in \mathcal{X}_{i,0}^{\rm hf}}
\\[3mm]
\displaystyle{
\mathbf{u}_i|_{\Gamma_{i,\rm dir}} = \bs{\Phi}_{i,\mathbf{u}_{\rm in}}
}
  \\ 
 \end{array}
\right.
\; i=1,2.
\end{equation}
where $\bs{\Phi}_{i,\mathbf{u}_{\rm in}}\in V_{i}^{\rm hf}$ is the interpolant of the nodal values of $\mathbf{u}_{\rm in}$ on $\Gamma_{i,\rm dir}$ \cite[p. 174]{quarteroni1999domain}. In view of the discussion below, we rewrite the HF residual as
\begin{equation}
\mathcal{R}_i^{\rm hf}({\mathbf{u}}_i,\,{{p}}_i, \mathbf{v},\,q )
=
\mathcal{R}_{i,\rm u}^{\rm hf}({\mathbf{u}}_i,\,{{p}}_i, \mathbf{v})
+
\mathcal{R}_{i,\rm p}^{\rm hf}({\mathbf{u}}_i,\,{{p}}_i, q );
\end{equation}
the first term corresponds to the residual of the momentum equation \eqref{eq:incompressible_strong}$_1$, while the second term corresponds to the residual of the continuity equation 
\eqref{eq:incompressible_strong}$_2$.
\end{subequations}

\subsection{Variational formulation}
\label{sec:FOM_base}
{Exploiting the previous notation, we can introduce the HF counterpart of the 
optimization formulation \eqref{eq:lsq}:
\begin{equation}
\label{eq:lsq_discr_standard}
\min_{\substack{
(\mathbf{u}_1,p_1)\in \mathcal{X}_1^{\rm hf}; \\
(\mathbf{u}_2,p_2)\in \mathcal{X}_2^{\rm hf}; \\
\mathbf{g} \in \Lambda^{\rm hf}}} \; 
 \frac{1}{2}\int_{\Gamma_0}\left\vert\mathbf{u}_1-\mathbf{u}_2\right\vert^2 dx + \frac{\delta}{2}\int_{\Gamma_0}\vert\mathbf{g}\vert^2 dx \quad
 {\rm s.t.} \;\;
 \left\{
 \begin{array}{l}
 \displaystyle{\mathcal{R}_i^{\rm hf}({\mathbf{u}}_i,\,{{p}}_i, \mathbf{v},\,q )
+ \mathcal{E}_i (\mathbf{g}, \mathbf{v})   = 0
\quad
\forall \, (\mathbf{v},\,q)\in \mathcal{X}_{i,0}^{\rm hf}}
\\[3mm]
\displaystyle{
\mathbf{u}_i|_{\Gamma_{i,\rm dir}} = \bs{\Phi}_{i,\mathbf{u}_{\rm in}},}
\quad
\; i=1,2.
  \\ 
 \end{array}
\right.
\end{equation}
This formulation coincides with the statement considered in \cite{Gunzburger_Lee_2000} and also 
\cite{prusak2023optimisation} --- with the minor difference that we here rely on a stabilized FE formulation for the local problems.
In the remainder of this section, we discuss an alternative HF formulation that will be used  to define the reduced-order model.

Formulation \eqref{eq:lsq_discr_standard} does not ensure the continuity of pressure across the internal interfaces: we prove this result rigorously in \ref{appendix:pressure_jump}; here, we provide a sketch of the proof that justifies our new DD statement.
If we denote by $(\mathbf{u}^{\rm hf}, p^{\rm hf})\in \mathcal{X}^{\rm hf}$ the solution to the global problem such that
$\mathcal{R}^{\rm hf}( \mathbf{u}^{\rm hf},   
p^{\rm hf}, \mathbf{v}, q ) = 0$ and we neglect for simplicity the stabilization term, we obtain
$$
\mathcal{R}_{\rm p}^{\rm hf}( \mathbf{u}^{\rm hf},    q )
=
\int_{\Omega_1}  \nabla \cdot  \mathbf{u}^{\rm hf} q \, dx
+
\int_{\Omega_2}  \nabla \cdot  \mathbf{u}^{\rm hf} q \, dx
= 0  \quad
\forall \, q\in Q^{\rm hf}.
$$
 Since $Q^{\rm hf}$ is a space of continuous functions, it is in general false that 
 $\mathcal{R}_{i,\rm p}^{\rm hf}( \mathbf{u}^{\rm hf}|_{\Omega_i},    q ) =
 \int_{\Omega_i}  \nabla \cdot  \mathbf{u}^{\rm hf} q \, dx = 0$ for all  
$q\in Q_i^{\rm hf}$, $i=1,2$; nevertheless, it is possible to show that there exists $h^{\star}\in \Xi^{\rm hf}$ such that
$$
\mathcal{R}_{i,\rm p}^{\rm hf}( \mathbf{u}^{\rm hf}|_{\Omega_i},    q ) + 
(-1)^i \int_{\Gamma_0} h^{\star} q \, dx = 0 
 \quad
\forall \, q\in Q_i^{\rm hf}.
$$
Similarly, there exists 
$\mathbf{g}^{\star}\in \Lambda^{\rm hf}$ such that
$$
\mathcal{R}_{i,\rm u}^{\rm hf}( 
\mathbf{u}^{\rm hf}|_{\Omega_i}, 
p^{\rm hf}|_{\Omega_i},
   \mathbf{v} ) + 
(-1)^i \int_{\Gamma_0} \mathbf{g}^{\star} \cdot \mathbf{v} \, dx
= 0
 \quad
\forall \, \mathbf{v} \in V_i^{\rm hf}, \;\; i=1,2.
$$
We conclude that the tuple
$( \mathbf{u}^{\rm hf}|_{\Omega_1}, 
p^{\rm hf}|_{\Omega_1} ,  \mathbf{u}^{\rm hf}|_{\Omega_2}, 
p^{\rm hf}|_{\Omega_2},  \mathbf{g}^{\star},  h^{\star}  )$ is a solution to the minimization problem
$$
\begin{array}{ll}
\displaystyle{
\min_{\substack{
(\mathbf{u}_1,p_1)\in \mathcal{X}_1^{\rm hf}; \\
(\mathbf{u}_2,p_2)\in \mathcal{X}_2^{\rm hf}; \\
\mathbf{g} \in \Lambda^{\rm hf},  
h \in \Xi^{\rm hf}   }} 
} &
\displaystyle{
 \frac{1}{2}\int_{\Gamma_0}\left\vert\mathbf{u}_1-\mathbf{u}_2\right\vert^2 dx +
 \frac{1}{2}\int_{\Gamma_0} \left( p_1-p_2\right)^2 dx  
}\\[3mm]
&
\displaystyle{
 {\rm s.t.} \;\;
 \left\{
 \begin{array}{l}
 \displaystyle{\mathcal{R}_i^{\rm hf}({\mathbf{u}}_i,\,{{p}}_i, \mathbf{v},\,q )
+ \mathcal{E}_i (\mathbf{g}, \mathbf{v})  
+
(-1)^i \int_{\Gamma_0} h q \, dx  = 0
\quad
\forall \, (\mathbf{v},\,q)\in \mathcal{X}_{i,0}^{\rm hf}},
\\[3mm]
\displaystyle{
\mathbf{u}_i|_{\Gamma_{i,\rm dir}} = \bs{\Phi}_{i,\mathbf{u}_{\rm in}},}
  \\ 
 \end{array}
\right.
i=1,2.
} 
 \\
\end{array}
$$

This discussion suggests to consider a modified formulation that explicitly penalizes the jump of the pressure field.
We introduce the state  $\mathbf{w}_i:=\text{vec}({\mathbf{u}}_i,{p}_i ) $, $i=1,2$ and the control
$\mathbf{s}:=\text{vec}({\mathbf{g}},h)$;
we introduce the control space
$\mathcal{S}^{\rm hf} = \Lambda^{\rm hf}  \times \Xi^{\rm hf}$ equipped with the norm
\begin{subequations}
\label{eq:lsq_discr_final}
\begin{equation}
\label{eq:calS_norm}
 \vertiii{ \mathbf{s} = {\rm vec} \left(
  \mathbf{g}, h \right) }^2
=
\int_{\Gamma_0}  
\big| \nabla_{\Gamma_0}   \mathbf{g}\big| ^2
+  | \mathbf{g} |^2
  + h^2 \, dx,
\end{equation}
where $\nabla_{\Gamma_0} \mathbf{g}$ denotes the gradient of $ \mathbf{g}$ in the tangential direction;
we use notation $\mathbf{w}_i(1:2)$ to indicate the first two components of the vector-valued function $\mathbf{w}_i$.
Then, we introduce the variational formulation:
\begin{equation}
\label{eq:lsq_discr_final_a}
\min_{\substack{
\mathbf{w}_1 \in \mathcal{X}_1^{\rm hf}; \\
\mathbf{w}_2 \in \mathcal{X}_2^{\rm hf}; \\
\mathbf{s} \in \mathcal{S}^{\rm hf}}} \; 
 \mathcal{F}_{\delta}\left(
 {\mathbf{w}}_1, {\mathbf{w}}_2,\mathbf{s}
 \right)
 \quad
 {\rm s.t.} \;\;
 \left\{
 \begin{array}{l}
 \displaystyle{\mathcal{R}_i^{\rm hf}({\mathbf{w}}_i,  \mathbf{z} )
+ \mathcal{E}_i^{\rm hf} (\mathbf{s}, \mathbf{z})   = 0
\quad
\forall \,  \mathbf{z} \in \mathcal{X}_{i,0}^{\rm hf}},
\\[3mm]
\displaystyle{\mathbf{w}_i(1:2)|_{\Gamma_{i,\rm dir}} = \bs{\Phi}_{i,\mathbf{u}_{\rm in}},}
  \\ 
 \end{array}
\right.
\; i=1,2;
\end{equation}
where  
\begin{equation} 
\label{eq:lsq_discr_final_b}
 \mathcal{F}_{\delta}\left(
 {\mathbf{w}}_1, {\mathbf{w}}_2,\mathbf{s}
 \right) := 
\frac{1}{2}
 \|    \mathbf{w}_1-\mathbf{w}_2 \|_{ L^2(\Gamma_0)}^2
 +
 \frac{1}{2}
 \delta  \vertiii{ \mathbf{s} }^2,
\end{equation}
\end{subequations}
 and
$\mathcal{E}_i^{\rm hf} (\mathbf{s}, \mathbf{v}, q)
=
\mathcal{E}_i(\mathbf{g}, \mathbf{v})
+
(-1)^i \int_{\Gamma_0} h q \, dx$. 
Note that we replaced the $L^2$ norm for the  control $\mathbf{g}$ with the $H^1$ norm: as discussed in section \ref{sec:hf_results}, we empirically observe that the use of the $H^1$ norm significantly reduces the oscillations in the profile of $\mathbf{g}$.

Some comments are in order.
First, the addition of the pressure jump and of the control $h$ ensures that the optimal pressure is continuous in the limit $\delta\to 0$. Note that at the continuous level the test space $Q=L^2(\Omega)$ is discontinuous; therefore, the control $h$ is   unnecessary.
Similarly, if we rely on a  P$0$ discretization for the pressure field \cite{Na_Xu_2022}, the pressure jump is 
also unnecessary.
Second, since velocity and pressure have different units and might also have very different magnitudes,
it might be necessary to rescale the objective function to avoid stability issues  (see, e.g.,  \cite{washabaugh2016use}).
In our numerical experiments, we solve the equations in non-dimensional form, and we do not include any scaling factor.

\subsection{Solution methods for \eqref{eq:lsq_discr_final}}
\label{sec:FOM_solve}
As in \cite{Iollo_Sambataro_Taddei_2023} and also \cite{Gunzburger_Lee_2000}, we resort to a gradient-based optimization method to find local minima of 
\eqref{eq:lsq_discr_final}. In more detail, we consider the Gauss-Newton method (GNM) and sequential quadratic programming (SQP)  \cite{Nocedal2006}.
As discussed below, both methods rely on static condensation to devise a reduced system for the control $\mathbf{s}$.

\subsubsection{Gauss-Newton method}
\label{sec:FOM_gnm}
We define the local solution map
$\mathcal{H}_i: \mathcal{S}^{\rm hf} \to \mathcal{X}_i^{\rm hf} $ such that 
$\mathcal{H}_i(\mathbf{s}) (1:2)|_{\Gamma_{i,\rm dir}} = \bs{\Phi}_{i,\mathbf{u}_{\rm in}}$ and 
\begin{equation}
\label{eq:constraint_gn}
\mathcal{R}_i^{\rm hf}(\mathcal{H}_i(\mathbf{s}),  \mathbf{z} )
+ \mathcal{E}_i^{\rm hf} (\mathbf{s}, \mathbf{z})   = 0
\quad
\forall \,  \mathbf{z} \in \mathcal{X}_{i,0}^{\rm hf},
\quad i=1,\,2.
\end{equation}
Then, we rewrite \eqref{eq:lsq_discr_final} as an unconstrained optimization problem:
\begin{equation}
\label{eq:unconstrained_statement_b}
\min_{\mathbf{s} \in \mathcal{S}^{\rm hf}} \mathcal{F}_{\delta}^{\rm gn}(\mathbf{s}) =
\mathcal{F}_{\delta}(\mathcal{H}_1(\mathbf{s}) ,\mathcal{H}_2(\mathbf{s}) ,     \mathbf{s}).
\end{equation}
If we define the space $\mathfrak{X}^{\rm hf}  = \Lambda^{\rm hf} \times \mathcal{S}^{\rm hf}$ equipped with the norm
$\| \mathbf{r} = {\rm vec}(\mathbf{w} , \mathbf{g} , h )  \|_{\mathfrak{X}^{\rm hf}}^2 = \| \mathbf{w} \|_{L^2(\Gamma_0)}^2 + 
\|   \mathbf{g} \|_{H^1(\Gamma_0)}^2 + \|  h \|_{L^2(\Gamma_0)}^2$ and the operator 
$F_{\delta}: \mathcal{S}^{\rm hf} \to \mathfrak{X}^{\rm hf}$ such that
$F_{\delta}(\mathbf{s}) =  {\rm vec}(
\tau_{\Gamma_0}  \left(
\mathcal{H}_1(\mathbf{s}) - \mathcal{H}_2(\mathbf{s}) \right), \sqrt{\delta}\mathbf{s})$, we can rewrite \eqref{eq:unconstrained_statement_b} as a nonlinear least-square problem, that is
\begin{subequations}
\label{eq:GNM}
\begin{equation}
\label{eq:unconstrained_statement_c}
\min_{\mathbf{s} \in \mathcal{S}^{\rm hf}} \mathcal{F}_{\delta}^{\rm gn}(\mathbf{s}) =
\frac{1}{2} \big\|  F_{\delta}(\mathbf{s}) \big\|_{\mathfrak{X}^{\rm hf}} ^2.
\end{equation}
The unconstrained problem \eqref{eq:unconstrained_statement_c} can be solved efficiently using GNM: given the initial condition 
$\mathbf{s}^{it=0}$, we repeatedly solve,
for $it=0,1,\ldots$, 
\begin{equation}
\label{eq:GNM_iteration} 
\mathbf{s}^{it+1}
={\rm arg} \min_{   \mathbf{s} \in \mathcal{S}^{\rm hf}  }
\frac{1}{2}
\big\|
F_{\delta}(\mathbf{s}^{it}) 
+
\frac{\partial F_{\delta} (\mathbf{s}^{it})   }{\partial \mathbf{s} } \left( \mathbf{s} - \mathbf{s} ^{it}  \right)
\big\|_{\mathfrak{X}^{\rm hf}} ^2.
\end{equation}
with the termination condition
\begin{equation}
\label{eq:termination_condition_gnm}
\frac{\vertiii{ \mathbf{s}^{it+1} - \mathbf{s} ^{it}  
}
}{\vertiii{  \mathbf{s} ^{it}  }    }
\leq tol,
\end{equation}
where $tol>0$ is a predefined tolerance.
\end{subequations}

We observe that GNM requires the explicit calculation of 
$F_{\delta}$ and 
the gradient of $F_{\delta}$ with respect to the control at $  \mathbf{s} ^{it}$: the former involves the solution to the local problems \eqref{eq:constraint_gn} for all components, while the latter is given by 
\begin{equation}
\label{eq:explicit_gradient_gnm}
\frac{\partial F_{\delta} (\mathbf{s}^{it})   }{\partial \mathbf{s} }
=
 \left[
 \begin{array}{c}
 \displaystyle{
\tau_{\Gamma_0}\left( 
  \frac{\partial  \mathcal{H}_{1} (\mathbf{s}^{it})   }{\partial \mathbf{s} } -
\frac{\partial  \mathcal{H}_{2} (\mathbf{s}^{it})   }{\partial \mathbf{s} }
\right)
 }\\[3mm]
  \displaystyle{  \sqrt{\delta} \texttt{id}  }
 \end{array}
\right]
\quad
{\rm with} \;\;
\frac{\partial  \mathcal{H}_i (\mathbf{s})   }{\partial \mathbf{s} }
=
-
\left(
\frac{\partial  \mathcal{R}_i^{\rm hf} (  \mathcal{H}_i (\mathbf{s}))   }{\partial \mathbf{w}_i }
\right)^{-1}  
\mathcal{E}_i^{\rm hf},
\end{equation}
and $\texttt{id}$ is the identity map.
We notice that the 
 evaluation of $\frac{\partial F_{\delta} (\mathbf{s}^{it})   }{\partial \mathbf{s} }$
involves the solution to $N^{\rm \mathbf{s}}$ linear systems where $N^{\rm \mathbf{s}}$ is the cardinality of the space $\mathcal{S}^{\rm hf}$;  it   is hence computationally feasible only if the dimension of the control is moderate: this observation highlights the importance of port reduction
\cite{Eftang_Patera_2013}
 for optimization-based methods. Conversely, we remark that the computation of $\mathcal{H}_1 (\mathbf{s}^{it}), \mathcal{H}_2 (\mathbf{s}^{it})$ and their derivatives is embarrassingly parallel with respect to the number of components: as discussed in  \cite{Iollo_Sambataro_Taddei_2023}, GNM enables  effective parallelization of the solution procedure if compared to standard multiplicative Schwartz iterative methods, provided that
 the computational cost is dominated by the solution to the local problems \eqref{eq:constraint_gn}.
Finally, we remark that the least-square problem in \eqref{eq:GNM_iteration} can be solved by explicitly assembling  the normal equations; 
alternatively, we might employ the QR factorization \cite{Carlberg2011}.  We omit the details.

\subsubsection{Sequential quadratic programming (SQP)}
\label{sec:FOM_sqp}
The SQP method solves a sequence of optimization subproblems, each of which optimizes a quadratic model of the objective subject to a linearization of the constraints. 
Since the objective (cf. \eqref{eq:lsq_discr_final_b}) is quadratic, we hence find the iterative method
\begin{subequations}
\label{eq:SQP}
\begin{equation}
\label{eq:SQP_iteration_a}
\left( \mathbf{w}_1^{it+1},\mathbf{w}_2^{it+1}, \mathbf{s}^{it+1} \right)
={\rm arg} 
\min_{\substack{
\mathbf{w}_1 \in \mathcal{X}_1^{\rm hf}; \\
\mathbf{w}_2 \in \mathcal{X}_2^{\rm hf}; \\
\mathbf{s} \in \mathcal{S}^{\rm hf}}} \; 
 \mathcal{F}_{\delta}\left(
 {\mathbf{w}}_1, {\mathbf{w}}_2,\mathbf{s}
 \right)
 \quad
 {\rm s.t.} \;\;
 \left\{
 \begin{array}{l}
 \displaystyle{
 \mathcal{R}_{i,it}^{\rm hf}(\mathbf{z} )
 +
  \mathcal{J}_{i,it}^{\rm hf}( \mathbf{w}_i- \mathbf{w}_i^{it} ,     \mathbf{z} )
+ \mathcal{E}_i^{\rm hf} (\mathbf{s}, \mathbf{z})   = 0}
\\[3mm]
\displaystyle{
{\mathbf{w}}_i(1:2)|_{\Gamma_{i,\rm dir}} = \bs{\Phi}_{i,\mathbf{u}_{\rm in}},} \quad
\forall \,  \mathbf{z} \in \mathcal{X}_{i,0}^{\rm hf}, \; 
\; i=1,2;
  \\ 
 \end{array}
\right.
\end{equation}
where the linear forms $\{  \mathcal{R}_{i,it}^{\rm hf} \}_i$ and the bilinear forms
$\{  \mathcal{J}_{i,it}^{\rm hf} \}_i$ are given by
\begin{equation}
\label{eq:SQP_iteration_b}
\mathcal{R}_{i,it}^{\rm hf}(\mathbf{z} ) =
\mathcal{R}_{i}^{\rm hf}( \mathbf{w}_i^{it},  \mathbf{z} ),
\quad
\mathcal{J}_{i,it}^{\rm hf}(\mathbf{w}, \mathbf{z} ) =
\frac{\partial \mathcal{R}_{i}^{\rm hf}}{\partial \mathbf{w}_{i}}\left[  \mathbf{w}_i^{it}    \right]
\left( \mathbf{w}, \mathbf{z}     \right),
\quad
\forall \, \mathbf{w}\in \mathcal{X}_i^{\rm hf},
\mathbf{z}\in \mathcal{X}_{i,0}^{\rm hf},
\;it=0,1,\ldots.
\end{equation}
In the numerical experiments, we consider the same termination condition \eqref{eq:termination_condition_gnm} used for GNM.
\end{subequations}

The optimization problem \eqref{eq:SQP_iteration_a} is quadratic with linear constraints. The solution to \eqref{eq:SQP_iteration_a} hence satisfies
\begin{subequations}
\label{eq:SQP_iteration_sc}
\begin{equation}
\label{eq:SQP_iteration_sc_a}
\left\{
\begin{array}{l}
\displaystyle{
\mathbf{s}^{it+1}
={\rm arg} \min_{   \mathbf{s} \in \mathcal{S}^{\rm hf}  }
\big\|
\widetilde{F}_{\delta}^{it} 
+
\widetilde{\mathcal{J}}_{\delta}^{it} 
\left( \mathbf{s} - \mathbf{s} ^{it}  \right)
\big\|_{\mathfrak{X}^{\rm hf}} ^2;
}
\\[3mm]
\displaystyle{
\mathbf{w}_i^{it+1} = 
\mathbf{w}_i^{it} -
\left( \mathcal{J}_{i,it}^{\rm hf}\right)^{-1} 
\left(
\mathcal{R}_{i,it}^{\rm hf}
+
\mathcal{E}_i^{\rm hf}
\mathbf{s}^{it+1}
\right),
\quad
i=1,2;
}
\\
\end{array}
\right.
\end{equation}
where
\begin{equation}
\label{eq:SQP_iteration_sc_b}
\widetilde{F}_{\delta}^{it}
=
 \left[
 \begin{array}{l}
 \tau_{\Gamma_0}\left(  
 \mathbf{w}_1^{it} - \mathbf{w}_2^{it} 
\right) 
 \\[3mm]
 \sqrt{\delta} \mathbf{s}^{it}   \\
 \end{array}
\right],
\quad
\widetilde{\mathcal{J}}_{\delta}^{it}  =
 \left[
 \begin{array}{c}
 \displaystyle{
  \tau_{\Gamma_0}\left(  
 \left( \mathcal{J}_{1,it}^{\rm hf}\right)^{-1} 
\mathcal{E}_1^{\rm hf}
-
 \left( \mathcal{J}_{2,it}^{\rm hf}\right)^{-1} 
\mathcal{E}_2^{\rm hf}
\right) 
  }\\[3mm]
  \displaystyle{  \sqrt{\delta} \texttt{id}  }
 \end{array}
\right].
\end{equation}
In our implementation, we rely on \eqref{eq:SQP_iteration_sc} to solve \eqref{eq:SQP_iteration_a}.
\end{subequations}

As for GNM, we   obtain a least-square problem for the control by applying static condensation: while in the previous section we first derived the unconstrained statement 
(cf.  \eqref{eq:unconstrained_statement_b}) and then we applied the optimization method, here we first optimize using SQP and then we apply static condensation at each iteration of the optimization algorithm.

Since the underlying PDE model is nonlinear, GNM requires to perform Newton subiterations to solve the local problems
\eqref{eq:constraint_gn} (see also the definition of $F_{\delta}(\mathbf{s}^{it})$ in 
\eqref{eq:GNM_iteration}); conversely, SQP does not involve subiterations. 
 The cost per iteration of SQP is hence significantly inferior to the cost of GNM. We empirically observe that the SQP approach mitigates the potential convergence issues of the sub-iterations for the local problems, particularly at the very early stages of the optimization loop.

We observe that \eqref{eq:GNM_iteration}  and 
\eqref{eq:SQP_iteration_sc_a}$_1$ are formally equivalent,
while 
\eqref{eq:explicit_gradient_gnm} and 
\eqref{eq:SQP_iteration_sc_b} share the same structure. We conclude that the SQP and GNM approaches can be implemented using the same data structures and can be parallelized in the same way. We omit the details.

\begin{remark}
For high-Reynolds number flows,
it is important to enhance the 
 robustness of our approach by  resorting to pseudo transient continuation (PTC) \cite{Kelley_Keyes_1998}. 
 PTC introduces an additional pseudo-temporal integration with adaptive time step, that is performed until convergence to a steady-state solution.
 If we resort to the backward Euler scheme for the discretization of the time derivative, at each   PTC step we solve the relaxed problem:
 \begin{equation}
\label{eq:lsq_discr_final_PTC}
\min_{\substack{
\mathbf{w}_1 \in \mathcal{X}_1^{\rm hf}; \\
\mathbf{w}_2 \in \mathcal{X}_2^{\rm hf}; \\
\mathbf{s} \in \mathcal{S}^{\rm hf}}} \; 
 \mathcal{F}_{\delta}\left(
 {\mathbf{w}}_1, {\mathbf{w}}_2,\mathbf{s}
 \right)
 \quad
 {\rm s.t.} \;\;
 \left\{
 \begin{array}{l}
 \displaystyle{
 \frac{1}{\Delta t_k}
 \int_{\Omega_i}
\Big(\mathbf{w}_i(1:2)
 -  \mathbf{w}_i^k(1:2)
 \Big) \cdot  \mathbf{v}     \, dx
 \,+ \,
  \mathcal{R}_i^{\rm hf}({\mathbf{w}}_i,  \mathbf{z} )
+ \mathcal{E}_i^{\rm hf} (\mathbf{s}, \mathbf{z})   = 0}
\\[3mm]
\displaystyle{
{\mathbf{w}}_i(1:2)|_{\Gamma_{i,\rm dir}} = \bs{\Phi}_{i,\mathbf{u}_{\rm in}},
\quad
\forall \,  \mathbf{z}=(\mathbf{v},q) \in \mathcal{X}_{i,0},
\;
\; i=1,2.
}
  \\ 
 \end{array}
\right.
\end{equation}
 where the index $k$ refers to the temporal loop and $\Delta t_k$ is chosen adaptively based on the residual of the steady-state equations.
We refer to \cite{Kelley_Keyes_1998} and to the references therein for further details.
Note that \eqref{eq:lsq_discr_final_PTC} is formally equivalent to \eqref{eq:lsq_discr_final_b}: it can hence be solved using the same procedure outlined above.
As discussed in \ref{sec:stablized_FE},  the time derivative should also be included in the SUPG and PSPG stabilization terms.
\end{remark}

\section{Projection-based reduced order formulation}
\label{sec:rom}
We rely on the formulation
\eqref{eq:lsq_discr_final_a} to define the CB-ROM.
Towards this end,
first, we identify a low-rank approximation of the control $\mathbf{s}^{\rm hf}$ and the local states 
$\mathbf{w}_1^{\rm hf}$, $\mathbf{w}_2^{\rm hf}$;
second, we devise local ROMs for the approximation of the solution maps
\eqref{eq:constraint_gn};
third, we devise specialized GNM and SQP methods for 
the formulation
\eqref{eq:lsq_discr_final_a} based on approximate solution  maps. 
We conclude the section by discussing the implementation of hybrid formulations that combine full-order and reduced-order local solution maps.
We remark that in order to further enhance online performance we should also reduce the online costs associated with the computation of the
$ \| \cdot   \|_{\mathfrak{X}^{\rm hf}}$
 norm in \eqref{eq:GNM_iteration} and 
 \eqref{eq:SQP_iteration_sc_a}$_1$ (cf.
 \cite{Iollo_Sambataro_Taddei_2023}): we do not address this issue in the present work.
 
\subsection{Construction of the local basis}
\label{sec:local_basis}
We denote by
$\{ \mu^{(k)} = {\rm vec}(\mu_1^{(k)},\mu_2^{(k)})  \}_{k=1}^{n_{\rm train}}$ a set of $n_{\rm train}$ global configurations; we further denote by
 $\{ \mathbf{w}_{i,k}^{\rm hf} : i=1,2, k=1, \ldots, n_{\rm train} \}$ and 
  $\{ \mathbf{s}_{k}^{\rm hf} : k=1, \ldots, 
  n_{\rm train} \}$ 
 the corresponding HF state and control estimates based on \eqref{eq:lsq_discr_final_a}.
 We resort to POD to devise a low-dimensional approximation space for the local solution manifolds and for the control
 \begin{equation}
 \label{eq:POD_application}
\left\{
\begin{array}{l}
\displaystyle{
\big[
\mathcal{Z}_i = {\rm span} \{  \boldsymbol{\zeta}_{i,j} \}_{j=1}^n
\big]
=\texttt{POD} \left(
\{ \mathbf{w}_{i,k}^{\rm hf} -  
\bs{\Psi}_{i,\mathbf{u}_{\rm in}}^{(k)}
 \}_{k=1}^{n_{\rm train}},
 \|   \cdot \|_{\mathcal{X}_i}, n
\right);
}
\\[3mm]
\displaystyle{
\big[
\mathcal{W} = {\rm span} \{  \boldsymbol{\eta}_{j} \}_{j=1}^m
\big]
=\texttt{POD} \left(
\{ \mathbf{s}_{k}^{\rm hf} 
 \}_{k=1}^{n_{\rm train}},
 \vertiii{\cdot }, m
\right).
}
\\
\end{array}
\right. 
 \end{equation}
 Here, the function $\texttt{POD} \left(
 \mathcal{D}, \|\cdot \|, n
\right)$ returns the POD space of dimension $n$ associated with the snapshot dataset $ \mathcal{D}$ and the norm 
 $ \|\cdot \|$ using the method of snapshots \cite{sirovich1987turbulence}. To ease the presentation, the integers $n$ and $m$ are here chosen \emph{a priori}: in practice, we should choose $n,m$ based on the  energy criterion. The fields 
 $\bs{\Psi}_{1,\mathbf{u}_{\rm in}}^{(k)},\bs{\Psi}_{2,\mathbf{u}_{\rm in}}^{(k)}$  satisfy the boundary conditions in \eqref{eq:lsq_discr_final_a}; we refer to 
  section \ref{sec:loc_train} for the explicit expression; this implies that the local space 
 $\mathcal{Z}_i$ is contained in $\mathcal{X}_{i,0}$, for $i=1,2$.
 In the remainder, we further use notation
 $\mathcal{Z}_i^{\rm dir} = \{  \bs{\Psi}_{i,\mathbf{u}_{\rm in}}(\mu) +  \boldsymbol{\zeta}_{i} : \boldsymbol{\zeta}_{i} \in \mathcal{Z}_i \}$ to identify the affine approximation spaces that incorporate Dirichlet boundary conditions.
 Furthermore, given 
  $\mathbf{w}_i \in \mathcal{Z}_i^{\rm dir}$ 
  and  
  $\mathbf{s}\in \mathcal{W}$, we define the generalized coordinates $\boldsymbol{\alpha}_1,\boldsymbol{\alpha}_2  \in \mathbb{R}^n$ and
  $\boldsymbol{\beta} \in \mathbb{R}^m$ such that
  \begin{equation}
  \label{eq:vec2field}
  \mathbf{w}_i(\boldsymbol{\alpha}_i; \mu)  = 
   \bs{\Psi}_{i,\mathbf{u}_{\rm in}}(\mu) + 
  \sum_{j=1}^n \left(  \boldsymbol{\alpha}_i \right)_j  \boldsymbol{\zeta}_{i,j},
  \;\;i=1,2; 
  \qquad
    \mathbf{s}(\boldsymbol{\beta})  = 
  \sum_{j=1}^m \left(  \boldsymbol{\beta} \right)_j  \boldsymbol{\eta}_{j}.
  \end{equation}

 \subsection{Construction of the local reduced-order models}
\label{sec:local_ROM}
 We rely on (Petrov-)Galerkin projection to define the local ROMs.
 
\paragraph{Galerkin projection.} We consider the local solution maps $\widehat{\mathcal{H}}_i^{\rm g}:
\mathcal{W} \to \mathcal{Z}_i^{\rm dir}
$  such that
\begin{equation}
\label{eq:local_solution_map_galerkin_a}
\mathcal{R}_i^{\rm hf}(\widehat{\mathcal{H}}_i^{\rm g}(\mathbf{s}),  \mathbf{z} )
+ \mathcal{E}_i^{\rm hf} (\mathbf{s}, \mathbf{z})   = 0
\quad
\forall \,  \mathbf{z} \in \mathcal{Z}_i,
\quad i=1,\,2.
\end{equation}
 It is useful to rewrite \eqref{eq:local_solution_map_galerkin_a} in fully-algebraic form. 
  Towards this end, 
 we  define the discrete residuals
 $\widehat{\mathbf{R}}_i^{\rm g}: \mathbb{R}^n \to \mathbb{R}^n$ and 
 $\widehat{\mathbf{E}}_i^{\rm g}\in \mathbb{R}^{n\times n}$ such that
  \begin{subequations}
\label{eq:local_solution_map_galerkin_algebraic}
\begin{equation}
\label{eq:local_solution_map_galerkin_algebraic_a}
\left(
\widehat{\mathbf{R}}_i^{\rm g}(\boldsymbol{\alpha})
\right)_j
=
\mathcal{R}_i^{\rm hf}\left(
  \mathbf{w}_i(\boldsymbol{\alpha}_i) , 
  \boldsymbol{\zeta}_{i,j}
\right),
\quad
\left(
\widehat{\mathbf{E}}_i^{\rm g}
\right)_{j,k}
=
\mathcal{E}_i^{\rm hf}\left(
  \boldsymbol{\eta}_{k} , 
  \boldsymbol{\zeta}_{i,j}
\right),
\quad
i=1,2,
j=1,\ldots,n,
k=1,\ldots,m;
\end{equation}
 and the local algebraic solution maps
 $\underline{\widehat{\mathcal{H}}}_i^{\rm g}:
\mathbb{R}^m \to \mathbb{R}^n$ such that
\begin{equation}
\label{eq:local_solution_map_galerkin_algebraic_b}
\widehat{\mathbf{R}}_i^{\rm g}
\left( 
\underline{\widehat{\mathcal{H}}}_i^{\rm g}
(\boldsymbol{\beta} )
   \right)
+
\widehat{\mathbf{E}}_i^{\rm g} \boldsymbol{\beta}
=0,
\quad
i=1,2.
\end{equation}
\end{subequations}

  \paragraph{Least-square Petrov-Galerkin (LSPG, \cite{Carlberg2011}) projection.}
 Given the reduced space $\mathcal{Y}_i\subset \mathcal{X}_{i,0}$, we introduce the local solution maps
 $\widehat{\mathcal{H}}_i^{\rm pg}:
\mathcal{W} \to \mathcal{Z}_i^{\rm dir}
$  such that
\begin{equation}
\label{eq:local_solution_map_LSPG}
\widehat{\mathcal{H}}_i^{\rm pg}(\mathbf{s})
= {\rm arg} \min_{ \boldsymbol{\zeta}\in \mathcal{Z}_i^{\rm dir}   }
\sup_{\mathbf{z}\in \mathcal{Y}_i} 
\frac{  
\mathcal{R}_i^{\rm hf}(\boldsymbol{\zeta},  \mathbf{z} )
+ \mathcal{E}_i^{\rm hf} (\mathbf{s}, \mathbf{z}) 
 }{\|   \mathbf{z} \|_{ \mathcal{X}_i  }}.
\end{equation}
For $\mathcal{Y}_i =  \mathcal{X}_{i,0}$, \eqref{eq:local_solution_map_LSPG} is referred to as \emph{minimum residual} projection.
In view of the derivation of the algebraic 
counterpart of 
 \eqref{eq:local_solution_map_LSPG},
 we denote by $\{ \bs{\upsilon}_{i,k}   \}_{k=1}^{j_{\rm es}}$ an orthonormal basis of $\mathcal{Y}_i$;
 \begin{subequations}
\label{eq:local_solution_map_LSPG_algebraic}
 then, we define the algebraic residuals 
 \begin{equation}
\label{eq:local_solution_map_LSPG_algebraic_a}
\left(
\widehat{\mathbf{R}}_i^{\rm pg}(\boldsymbol{\alpha}_i)
\right)_\ell
=
\mathcal{R}_i^{\rm hf}\left(
  \mathbf{w}_i(\boldsymbol{\alpha}_i) , 
  \boldsymbol{\upsilon}_{i,\ell}
\right),
\quad
\left(
\widehat{\mathbf{E}}_i^{\rm pg}
\right)_{\ell,k}
=
\mathcal{E}_i^{\rm hf}\left(
  \boldsymbol{\eta}_{k} , 
  \boldsymbol{\upsilon}_{i,\ell}
\right),
\end{equation}
with 
$i=1,2,
\ell=1,\ldots, j_{\rm es},
k=1,\ldots,m$;
 and the local algebraic solution maps
 $\underline{\widehat{\mathcal{H}}}_i^{\rm pg}:
\mathbb{R}^m \to \mathbb{R}^n$ such that
\begin{equation}
\label{eq:local_solution_map_LSPG_algebraic_b}
\underline{\widehat{\mathcal{H}}}_i^{\rm pg}(\boldsymbol{\beta})
=
{\rm arg} \min_{\boldsymbol{\alpha}\in \mathbb{R}^n}
\Big|
\widehat{\mathbf{R}}_i^{\rm pg}
\left( 
\boldsymbol{\alpha}
   \right)
+
\widehat{\mathbf{E}}_i^{\rm g} \boldsymbol{\beta}
\Big|,
\quad
i=1,2.
\end{equation}
\end{subequations} 
 We observe that \eqref{eq:local_solution_map_LSPG_algebraic_b} reads as a nonlinear least-square problem that can be solved efficiently using GNM; the combination  of LSPG ROMs within the DD formulation  \eqref{eq:lsq_discr_final_a} is challenging: we address this issue in the next section.
 
 The ROM \eqref{eq:local_solution_map_LSPG} depends on the choice of the test space $\mathcal{Y}_i$.  
 Following \cite{taddei2021space,taddei2021discretize},
 we propose to construct the test space
 $\mathcal{Y}_i$ using POD. Given the snapshots 
  $\{  \mathbf{w}_{i,k}^{\rm hf}  \}_{k}$ and the ROB
$ \{  \boldsymbol{\zeta}_{i,j} \}_{j=1}^n$, we compute the Riesz elements
$\boldsymbol{\psi}_{i,j,k}\in \mathcal{X}_{i,0}^{\rm hf}$ such that
\begin{subequations}
\label{eq:test_space}
\begin{equation}
\label{eq:test_space_a}
\left(
\boldsymbol{\psi}_{i,j,k},
\mathbf{z}
\right)_{\mathcal{X}_i}
\; = \;
\frac{\partial \mathcal{R}_{i}^{\rm hf}}{\partial \mathbf{w}_{i}}\left[  \mathbf{w}_{i,k}^{\rm hf}    \right]
\left( \boldsymbol{\zeta}_{i,j}, \mathbf{z}     \right),
\quad
\forall \; \mathbf{z}   \in \mathcal{X}_{i,0}^{\rm hf},
\end{equation}
for $i=1,2$, $j=1,\ldots,n$, $k=1,\ldots,n_{\rm train}$.
Then, we apply POD to find the low-dimensional bases
$\mathcal{Y}_1$ and $\mathcal{Y}_2$,
\begin{equation}
\label{eq:test_space_b}
\big[
\mathcal{Y}_i = {\rm span} \{  \boldsymbol{\upsilon}_{i,j} \}_{j=1}^{j_{\rm es}}
\big]
=\texttt{POD} \left(
\{ \boldsymbol{\psi}_{i,j,k} \, : \,  
j=1,\ldots,n, 
k=1, \ldots,  n_{\rm train},
\},
\|  \cdot\|_{\mathcal{X}_{i}},  j_{\rm es}
\right),
\quad
i=1,2.
\end{equation}
As in  \cite{taddei2021space,taddei2021discretize}, we choose $ j_{\rm es}=2n$; we refer to 
 \cite[Appendix C]{taddei2021space} for a rigorous justification of the choice of the test space for linear inf-sup stable problems.
\end{subequations}
 
 \begin{remark}
 The solution to \eqref{eq:local_solution_map_galerkin_a} and 
\eqref{eq:local_solution_map_LSPG} is expensive  due to the need to evaluate the HF residual and its Jacobian at each iteration. To reduce the computational burden, several authors have proposed to resort to hyper-reduction strategies \cite{ryckelynck2009hyper}  to speed up assembly costs at prediction stage.
We refer to the recent review \cite{farhat2020computational} for a detailed presentation of the subject. Since the local problems 
 \eqref{eq:local_solution_map_galerkin_a} and 
\eqref{eq:local_solution_map_LSPG} fit in the framework of monolithic pMOR, standard hyper-reduction techniques can be employed. We refer to a future work for the development and the assessment  of hyper-reduction techniques for the DD formulation of this work.
 \end{remark}

\subsection{Global formulation}
\label{sec:global_ROM}
We first introduce  the algebraic  counterpart of the objective \eqref{eq:lsq_discr_final_b}.
We denote by $\{  (\mathbf{x}_q^{\Gamma}, \omega_q^{\Gamma} )  \}_{q=1}^{N_{\Gamma}}$ the FE quadrature rule of $\int_{\Gamma_0}[\bullet] \, dx$ and we define the matrices $\mathbf{A}_1,\mathbf{A}_2\in \mathbb{R}^{3N_{\Gamma} \times n}$ and the vector 
 $\mathbf{b} \in \mathbb{R}^{3N_{\Gamma}}$ such that
\begin{subequations}
\label{eq:objective_function_reduced}
\begin{equation}
\label{eq:objective_function_reduced_a}
\left(   \mathbf{A}_i     \right)_{q+(\ell-1)N_{\Gamma},j}
 =
\sqrt{\omega_q^{\Gamma}}
    \left(
    \boldsymbol{\zeta}_{i,j}(   \mathbf{x}_q^{\Gamma}   )
      \right)_{\ell},
      \quad
\left(   \mathbf{b}     \right)_{q+(\ell-1)N_{\Gamma}}      
   =
     \sqrt{\omega_q^{\Gamma}}
    \left(
\bs{\Psi}_{1,\mathbf{u}_{\rm in}}(   \mathbf{x}_q^{\Gamma}   )
-
\bs{\Psi}_{2,\mathbf{u}_{\rm in}}(   \mathbf{x}_q^{\Gamma}   )
      \right)_{\ell}   
 \end{equation}
 with $q=1,\ldots,N_{\Gamma}$,
 $\ell=1,2,3$, $j=1,\ldots,n$; then, we rewrite the objective function as
 \begin{equation}
\label{eq:objective_function_reduced_b}
\boldsymbol{\mathcal{F}}_{\delta}\left(
 {\boldsymbol{\alpha}}_1, {\boldsymbol{\alpha}}_2,{\boldsymbol{\beta}}
 \right)
 = 
 \mathcal{F}_{\delta}\left(
 {\mathbf{w}}_1(   {\boldsymbol{\alpha}}_1 ), {\mathbf{w}}_2( {\boldsymbol{\alpha}}_2),\mathbf{s}({\boldsymbol{\beta}})
 \right)
 =
 \frac{1}{2}
\big|
\mathbf{A}_1 \boldsymbol{\alpha}_1
-
\mathbf{A}_2 \boldsymbol{\alpha}_2
\big|^2
+
\frac{\delta}{2}
\big|
\boldsymbol{\beta}
\big|^2.
\end{equation}  
 \end{subequations}

For the Galerkin local ROMs, the DD ROM can be obtained by simply projecting  \eqref{eq:lsq_discr_final_b} onto the reduced spaces, that is
 \begin{subequations}
 \label{eq:Galerkin_DD}
  \begin{equation}
\label{eq:Galerkin_DD_a}
\min_{\substack{
\mathbf{w}_1 \in \mathcal{Z}_1^{\rm dir}; \\
\mathbf{w}_2 \in \mathcal{Z}_2^{\rm dir}; \\
\mathbf{s} \in \mathcal{W}  }} \; 
 \mathcal{F}_{\delta}\left(
 {\mathbf{w}}_1, {\mathbf{w}}_2,\mathbf{s}
 \right)
 \quad
 {\rm s.t.} \;\;
 \displaystyle{\mathcal{R}_i^{\rm hf}({\mathbf{w}}_i,  \mathbf{z} )
+ \mathcal{E}_i^{\rm hf} (\mathbf{s}, \mathbf{z})   = 0
\quad
\forall \,  \mathbf{z} \in \mathcal{Z}_{i}}
\quad
\; i=1,2.
\end{equation}
Note that non-homogeneous Dirichlet conditions are encoded in the choice of the ansatz. Exploiting the previous notation, we obtain the algebraic  counterpart of 
 \eqref{eq:Galerkin_DD_a}.
   \begin{equation}
\label{eq:Galerkin_DD_b}
\min_{\substack{
\boldsymbol{\alpha}_1, \boldsymbol{\alpha}_2 \in \mathbb{R}^n; \\
\boldsymbol{\beta}  \in \mathbb{R}^m
}} \; 
\boldsymbol{\mathcal{F}}_{\delta}\left(
 {\boldsymbol{\alpha}}_1, {\boldsymbol{\alpha}}_2,{\boldsymbol{\beta}}
 \right)
 \quad
 {\rm s.t.} \;\;
\widehat{\mathbf{R}}_i^{\rm g}
\left( 
\boldsymbol{\alpha}_i 
   \right)
+
\widehat{\mathbf{E}}_i^{\rm g} \boldsymbol{\beta}
=0,
\quad
i=1,2.
\end{equation}
Problem \eqref{eq:Galerkin_DD_b} can be solved using either GNM or SQP; as for the HF model, the methods require the computation of the derivatives of the  local solution maps
 \eqref{eq:local_solution_map_galerkin_algebraic_b}, which satisfy
   \begin{equation}
\label{eq:Galerkin_DD_c}
 \frac{\partial \underline{\widehat{\mathcal{H}}}_i^{\rm g}}{\partial  \boldsymbol{\beta} }(  
\boldsymbol{\beta}
  )  
 =
 - \left(
  \frac{\partial 
\widehat{\mathbf{R}}_i^{\rm g}  
  }{\partial  \boldsymbol{\alpha}_i }
 \left[ 
\underline{\widehat{\mathcal{H}}}_i^{\rm g}(  
\boldsymbol{\beta}
  )  \right]
 \right)^{-1} 
 \widehat{\mathbf{E}}_i^{\rm g}.
\end{equation}
 Note that \eqref{eq:Galerkin_DD_c} can be computed using standard FE routines that are readily available for the full-order model.
   \end{subequations}
 
The combination of
\eqref{eq:lsq_discr_final_a} with the LSPG ROM 
 \eqref{eq:local_solution_map_LSPG_algebraic_b} 
is more involved since the resulting component-based ROM cannot be interpreted as the projection of
\eqref{eq:lsq_discr_final_a} onto suitable low-dimensional spaces.
We here rely on an approximate SQP procedure. At each iteration, given the triplet 
$(\boldsymbol{\alpha}_1^{it}, \boldsymbol{\alpha}_2^{it},
\boldsymbol{\beta}^{it})$, we compute 
\begin{subequations}
\label{eq:SQP_LSPG}
\begin{equation}
\label{eq:SQP_LSPG_a}
\widehat{\mathbf{R}}_i^{{\rm pg},it}
=
\widehat{\mathbf{R}}_i^{\rm pg}(\boldsymbol{\alpha}_i^{it})
\in \mathbb{R}^{j_{\rm es}},
\quad
\widehat{\mathbf{J}}_i^{{\rm pg},it}
=
\frac{\partial \widehat{\mathbf{R}}_i^{\rm pg}}{\partial \boldsymbol{\alpha}_i}
(\boldsymbol{\alpha}_i^{it})
\in \mathbb{R}^{j_{\rm es}\times n};
\end{equation}
then, we solve the minimization problem
\begin{equation}
\label{eq:SQP_LSPG_b}
\min_{\substack{
\boldsymbol{\alpha}_1, \boldsymbol{\alpha}_2 \in \mathbb{R}^n; \\
\boldsymbol{\beta}  \in \mathbb{R}^m
}} \; 
\boldsymbol{\mathcal{F}}_{\delta}\left(
 {\boldsymbol{\alpha}}_1, {\boldsymbol{\alpha}}_2,{\boldsymbol{\beta}}
 \right)
 \quad
 {\rm s.t.} \;\;
\left(
\widehat{\mathbf{J}}_i^{{\rm pg},it}
\right)^\top 
 \left(
 \widehat{\mathbf{R}}_i^{{\rm pg},it}
 +
 \widehat{\mathbf{J}}_i^{{\rm pg},it}
  \left(
  {\boldsymbol{\alpha}}_i - \boldsymbol{\alpha}_i^{it}
 \right) 
 +
 \widehat{\mathbf{E}}_i^{\rm pg}   {\boldsymbol{\beta}}
 \right)
=0,
\quad
i=1,2.
\end{equation}
\end{subequations}
We observe that for $n=j_{\rm es}$ the constraints imply that
$ \widehat{\mathbf{R}}_i^{{\rm pg},it}
 +
 \widehat{\mathbf{J}}_i^{{\rm pg},it}
  \left(
{\boldsymbol{\alpha}}_i - \boldsymbol{\alpha}_i^{it}
\right) 
+
\widehat{\mathbf{E}}_i^{\rm pg}   {\boldsymbol{\beta}}
=0$ for $i=1,2$. We hence recover
the  standard SQP procedure.

A thorough convergence  analysis of the SQP procedure \eqref{eq:SQP_LSPG} is beyond the scope of the present work.
Here, we observe that 
if $\bs{\alpha}_i^{it} \to \bs{\alpha}_i^\star$ for $i=1,2$ and $\bs{\beta}^{it} \to \bs{\beta}^\star$,   the constraints
in \eqref{eq:SQP_LSPG_b} reduce to 
$$
\left(
\frac{\partial \widehat{\mathbf{R}}_i^{\rm pg}}{\partial \boldsymbol{\alpha}_i}
(\boldsymbol{\alpha}_i^{\star})
\right)^\top
\left(
\widehat{\mathbf{R}}_i^{\rm pg}
(\boldsymbol{\alpha}_i^{\star})
+
\widehat{\mathbf{E}}_i^{\rm pg}
\bs{\beta}^\star
\right) = 0,
\quad
i=1,2.
$$
Given $i\in \{1,2\}$, the latter
  implies that 
$\boldsymbol{\alpha}_i^{\star}$
is a stationary point  of the function
$\boldsymbol{\alpha}_i\mapsto
\big|
\widehat{\mathbf{R}}_i^{\rm pg}
(\boldsymbol{\alpha}_i)
+
\widehat{\mathbf{E}}_i^{\rm pg}
\bs{\beta}^\star
\big|^2$; provided that 
\eqref{eq:local_solution_map_LSPG_algebraic_b} admits a unique solution, we hence find that
$\boldsymbol{\alpha}_i^{\star} = \underline{\widehat{\mathcal{H}}}_i^{\rm pg}(\boldsymbol{\beta}^{\star})$.

\subsection{Enrichment of the trial space}
\label{sec:enrichment_basis}
 In  \eqref{eq:POD_application}, we construct the state and control spaces independently. We might hence obtain that the matrices 
$ \frac{\partial \underline{\widehat{\mathcal{H}}}_i^{\rm g}}{\partial  \boldsymbol{\beta} }(  
\boldsymbol{\beta}
  )$ and 
  $ \frac{\partial \underline{\widehat{\mathcal{H}}}_i^{\rm pg}}{\partial  \boldsymbol{\beta} }(  
\boldsymbol{\beta}
  )$ are rank-deficient: as empirically shown in the numerical examples, rank deficiency of the sensitivity matrices leads to instabilities of the ROM and to poor approximations of the control $\mathbf{s}$. To address this issue, we propose to enrich the trial spaces $\mathcal{Z}_1,\mathcal{Z}_2$ with the perturbed snapshots
$\{ \widetilde{\mathbf{w}}_{i,j,k} \}_{i,j,k}$  
  \begin{equation}
  \label{eq:enriching_modes}
  \mathcal{R}_{i}^{\rm hf}(\mathbf{w}_{i,k}^{\rm hf},  \mathbf{z} )
  +
\frac{\partial \mathcal{R}_{i}^{\rm hf}}{\partial \mathbf{w}_{i}}\left[  \mathbf{w}_{i,k}^{\rm hf}    \right]
\left( \widetilde{\mathbf{w}}_{i,j,k} + 
\bs{\Psi}_{i,\mathbf{u}_{\rm in}}
- \mathbf{w}_{i,k}^{\rm hf}\;  ,   \;    \mathbf{z}     \right)
  +  \mathcal{E}_i^{\rm hf}( \boldsymbol{\eta}_j
  \;  ,   \;       
  \mathbf{z}   )
  =
  0
  \quad
  \forall \, \mathbf{z}   \in \mathcal{X}_{i,0}^{\rm hf}.
\end{equation}
In more detail, given the snapshots  $\{ \mathbf{w}_{i,k}^{\rm hf} : i=1,2, k=1, \ldots, n_{\rm train} \}$ and the reduced spaces 
  $\mathcal{Z}_1,\mathcal{Z}_2, \mathcal{W}$, we compute the perturbations
  $\{ \widetilde{\mathbf{w}}_{i,j,k}  \}_{j,k} \subset  \mathcal{X}_{i,0}^{\rm hf}$ for $i=1,2$, and then we update the reduced spaces $\mathcal{Z}_1$ and $\mathcal{Z}_2$ as follows:
  \begin{equation}
  \label{eq:hierarchicalPOD}
  \mathcal{Z}_i^{\rm new} = 
  \mathcal{Z}_i \oplus \mathcal{Z}_i',
  \quad
  {\rm with} \;
  \big[
\mathcal{Z}_i' 
\big]
=\texttt{POD} \left(
\{ 
\Pi_{\mathcal{Z}_i^\perp}
 \widetilde{\mathbf{w}}_{i,j,k}: j=1,\ldots,m, k=1, \ldots, n_{\rm train} \},
 \|   \cdot \|_{\mathcal{X}_i}, n'
\right),
 \end{equation}
 where  $\Pi_{\mathcal{Z}_i^\perp} \bullet $ denotes the projection of $\bullet$ onto the orthogonal complement of the space $\mathcal{Z}_i$ and $n'$ is a given integer.
 
 Some comments are in order. The hierarchical construction of the state approximation space  \eqref{eq:hierarchicalPOD} has been proposed in a similar context in \cite{haasdonk2017reduced}. The integer  $n'$ should be sufficiently large to ensure stability of the DD formulation; we further comment on the selection of $n'$  in the numerical experiments.
Finally, in \ref{appendix:enrichment}, we provide a formal justification of the enrichment strategy for a linear  problem.
  
\subsection{Hybrid solver}
\label{sec:hybrid_solver}

In the introduction, we anticipated the importance of developing a DD formulation that enables the seamless coupling of local, independently generated models.
  We here illustrate how to combine the HF model introduced in section \ref{sec:FOM} with the local ROM introduced in section \ref{sec:rom}. To provide a concrete reference, we assume that the HF model 
 \eqref{eq:constraint_gn} is solved in $\Omega_1$ and that the LSPG ROM   
  \eqref{eq:local_solution_map_LSPG} is solved in $\Omega_2$. 

We set
$N_1^{\mathbf{w}} =N_1^{\mathbf{u}}+N_1^{p} $ and we 
 define the basis (cf. section \ref{sec:notation_discrete})
  $$
  \{ \boldsymbol{\xi}_{1,j}  \}_{j=1}^{N_1^{\mathbf{w}}}
  =
  \left\{
  {\rm vec}(\bs{\varphi}_{1,1},0 ),\ldots,
  {\rm vec}(\bs{\varphi}_{1,N^{\mathbf{u}}},0 ),
   {\rm vec}(0,0,\psi_{1,1} ),\ldots,
     {\rm vec}(0,0,\psi_{1,N_1^p} ) 
  \right\}.
  $$
  We introduce   the vector-valued representation of the lifted state field
  $\mathring{\underline{\mathbf{w}}}_1
=
  {\underline{\mathbf{w}}}_1
  -
\underline{\bs{\Psi}}_{1,\mathbf{u}_{\rm in}} \in \mathbb{R}^{ N_1^{\mathbf{w}}   }  $. Then, we introduce the matrices
(see
\eqref{eq:local_solution_map_LSPG_algebraic_a}
and 
\eqref{eq:objective_function_reduced_a})
$\mathbf{A}_1^{\rm hf}    \in \mathbb{R}^{3 N_{\Gamma} \times N_1^{\mathbf{w}}  }$ and 
$\mathbf{E}_1^{\rm hf}    \in \mathbb{R}^{ N_1^{\mathbf{w}}\times m}$ such that
$$
\left(   \mathbf{A}_1^{\rm hf}     \right)_{q+(\ell-1)N_{\Gamma},j}
 =
\sqrt{\omega_q^{\Gamma}}
    \left(
    \boldsymbol{\xi}_{1,j}(   \mathbf{x}_q^{\Gamma}   )
      \right)_{\ell},
      \quad
 \left(
\widehat{\mathbf{E}}_1^{\rm hf}
\right)_{j,k}
=
\mathcal{E}_i^{\rm hf}\left(
  \boldsymbol{\eta}_{k} , 
 \boldsymbol{\xi}_{1,j}
\right).
$$
 Then, we can state the SQP method for the hybrid coupled problem: 
 \begin{subequations}
  \label{eq:SQP_hybrid}
  \begin{equation}
 \label{eq:SQP_hybrid_a}
 \min_{\substack{
\mathring{\underline{\mathbf{w}}}_1 \in \mathbb{R}^{N_1^{\mathbf{w}}}; \\
\boldsymbol{\alpha}_2  \in \mathbb{R}^n; \\
\boldsymbol{\beta}  \in \mathbb{R}^m
}} \; 
\frac{1}{2}
\big|
\mathbf{A}_1^{\rm hf} \mathring{\underline{\mathbf{w}}}_1
-
\mathbf{A}_2 \boldsymbol{\alpha}_2
+
\mathbf{b}
\big|^2
+\frac{\delta}{2} | \boldsymbol{\beta}   |^2
 \quad
 {\rm s.t.} \;\;
 \left\{
 \begin{array}{l}
 \displaystyle{
 {\mathbf{R}}_1^{{\rm hf},it}
 +
 {\mathbf{J}}_1^{{\rm hf},it}
 \left(
 \mathring{\underline{\mathbf{w}}}_1 -
 \mathring{\underline{\mathbf{w}}}_1^{it}
 \right)
 +
 \widehat{\mathbf{E}}_1^{\rm hf}
 {\boldsymbol{\beta}}
 =
 0;
 }
 \\[3mm]
\displaystyle{
\left(
\widehat{\mathbf{J}}_2^{{\rm pg},it}
\right)^\top 
 \left(
 \widehat{\mathbf{R}}_2^{{\rm pg},it}
 +
 \widehat{\mathbf{J}}_2^{{\rm pg},it}
  \left(
  {\boldsymbol{\alpha}}_2 - \boldsymbol{\alpha}_2^{it}
 \right) 
 +
 \mathbf{E}_2^{\rm pg}   {\boldsymbol{\beta}}
 \right)
=0.}
\\
\end{array}
 \right.
\end{equation}
where
\begin{equation}
 \label{eq:SQP_hybrid_b}
\left( {\mathbf{R}}_1^{{\rm hf},it}  \right)_j
=
 \mathcal{R}_1^{\rm hf}(
 {\mathbf{w}}_1^{it},
 \boldsymbol{\xi}_{1,j} ),
 \quad
 \left(  {\mathbf{J}}_1^{{\rm hf},it} \right)_{j,k}
   =
   \frac{\partial \mathcal{R}_1^{\rm hf}}{\partial \mathbf{w}_{i}}\left[  
   {\mathbf{w}}_1^{it}     \right]
\left( 
\boldsymbol{\xi}_{1,k} ,  
    \boldsymbol{\xi}_{1,j}   \right),
    \quad
    j,k=1,\ldots,N_1^{\mathbf{w}},
    \end{equation}
 \end{subequations}
with 
 ${\mathbf{w}}_1^{it} =  \mathring{\mathbf{w}}_1^{it}+ \bs{\Psi}_{1,\mathbf{u}_{\rm in}}$.
  
  Problem \eqref{eq:SQP_hybrid_a} can be solved using the static condensation procedure outlined in \eqref{eq:SQP_iteration_sc_a} and 
\eqref{eq:SQP_iteration_sc_b}. Note that for 
\eqref{eq:SQP_hybrid_a} 
the least-square problem 
\eqref{eq:SQP_iteration_sc_a}$_1$ is of size $m$: the computational cost is hence independent of $N_1^{\mathbf{w}}$.
On the other hand, the cost to assemble
the least-square problem in 
\eqref{eq:SQP_iteration_sc_a}$_1$ is dominated by the cost of computing 
$(  {\mathbf{J}}_1^{{\rm hf},it})^{-1} \mathbf{E}_1^{\rm hf}$, which requires the solution to $m$ linear systems of size $N_1^{\mathbf{w}}$. We emphasize that the local models in 
 \eqref{eq:SQP_hybrid_a} only communicate through the vehicle of the control $\mathbf{s}$ (or equivalently through the generalized coordinates $\bs{\beta}$) and the matrices $\mathbf{A}_1^{\rm hf}, \mathbf{A}_2$ in the objective function: the implementation of the local models is hence agnostic to the discretization that is employed in the neighboring subdomain.

\section{Localized training and {adaptive}  enrichment}
\label{sec:loc_train}
In section \ref{sec:rom} we devised the CB-ROM based on the DD formulation \eqref{eq:lsq_discr_final_a}. The major limitation of the approach is the need for global  HF solves to generate the reduced spaces (cf. \eqref{eq:POD_application}). In this section, we propose a general strategy to adaptively construct the reduced space for state and control, for the model problem of section \ref{sec:model_pb}. First, in section \ref{sec:multi_component}, we present the general multi-component DD formulation and relevant quantities that are employed in the adaptive procedure. Then, in sections 
\ref{sec:pairwise_control} and \ref{sec:localized_state}, we present the localized training strategies for the control $\mathbf{s}$ and for the local states. Finally, in section 
\ref{sec:adaptive_enrichment} we present the adaptive enrichment strategy that allows the correction of the local approximations based on global reduced-order solves.

\subsection{Multi-component formulation}
\label{sec:multi_component}
Given the archetype components $\{  \widetilde{\Omega}^k \}_{k=1}^{N_{\rm c}}$ and the reference port $\widetilde{\Gamma}$, we introduce the instantiated system $\Omega\subset \mathbb{R}^2$ such that
$\overline{\Omega} = \bigcup_{i=1}^{N_{\rm dd}} \overline{\Omega}_i$ with 
$ \Omega_i= \Phi^{L_i}(\widetilde{\Omega}^{L_i},\mu_i)$ for $i=1,\ldots,N_{\rm dd}$ and ports
$\{  \Gamma_j \}_{j=1}^{N_{\rm f}}$  such that
$ \Gamma_j= \Psi_j(\widetilde{\Gamma})$, where 
$\mu_1,\ldots,\mu_{N_{\rm dd}}$ are geometric parameters associated with the elemental mapping
and $\Psi_1,\ldots,\Psi_{N_{\rm dd}}$ are the mappings associated with the ports; we further introduce the union of all ports
$\Gamma := \bigcup_{j=1}^{N_{\rm f}}    \Gamma_j$.
For $i=1,\ldots,N_{\rm dd}$, we denote by $\mathcal{I}_i^{\Gamma} \subset \{ 1,\ldots,N_{\rm f} \}$ the set of the indices of the ports that belong  to $\partial \Omega_i$. We further denote by $\mathbf{n}_j^+$ the positive normal to the port $\Gamma_j$.
We denote by $\widetilde{\bs{\Psi}}_{k,\mathbf{u}_{\rm in}}$ the HF solution to the Navier-Stokes equations in $\widetilde{\Omega}_k$ with inflow condition $u_0({\rm Re}_{\rm ref})$ for some ${\rm Re}_{\rm ref}>0$ and Neumann boundary conditions on the remaining ports; then, we introduce the parametric field
$\widetilde{\bs{\Psi}}_{k,\mathbf{u}_{\rm in}}({\rm Re}) =
\frac{\rm Re}{{\rm Re}_{\rm ref}} \,  \widetilde{\bs{\Psi}}_{k,\mathbf{u}_{\rm in}}$.

We introduce the FE spaces 
$\widetilde{\mathcal{X}}_k^{\rm hf}$
and
$\widetilde{\mathcal{X}}_{k,0}^{\rm hf}$
associated with the domain $\widetilde{\Omega}^k$ (cf. section \ref{sec:notation_discrete}) for $k=1,\ldots,N_{\rm c}$; furthermore, we introduce the reduced spaces
$\widetilde{\mathcal{Z}}_k \subset \widetilde{\mathcal{X}}_{k,0}^{\rm hf}$ and the affine spaces
$\widetilde{\mathcal{Z}}_k^{\rm dir}({\rm Re}):=
\widetilde{\bs{\Psi}}_{k,\mathbf{u}_{\rm in}}({\rm Re})
+
\widetilde{\mathcal{Z}}_k$
---to shorten notation, we omit the dependence of $\widetilde{\mathcal{Z}}_k^{\rm dir}$ on the Reynolds number.
The choice   $\widetilde{\mathcal{Z}}_k=\widetilde{\mathcal{X}}_{k,0}^{\rm hf}$ corresponds to considering the HF discretization in all components of type $k$.
Then, we define the global discontinuous approximation space over $\Omega$
\begin{equation}  
\label{eq:approx_space_multi}
\mathcal{X}^{\rm dd}:=\left\{
\mathbf{w}\in [L^2(\Omega)]^3 \,:\,
\mathbf{w}|_{\Omega_i} \circ 
 \Phi^{L_i}(\cdot,\mu_i) \in \widetilde{\mathcal{Z}}_{L_i}^{\rm dir}, \;\; i=1,\ldots,N_{\rm dd}
\right\}.
\end{equation}
We denote by $\llbracket   \mathbf{w} \rrbracket   \in [L^2(\Gamma)]^3$ the jump of the field $\mathbf{w}$ on the interfaces of the partition
\begin{equation}
\label{eq:jump_definition}
\llbracket   \mathbf{w} \rrbracket ( \mathbf{x})
=
 \mathbf{w}^+( \mathbf{x}) -  \mathbf{w}^-( \mathbf{x}) \;\; \forall \, \mathbf{x} \in \Gamma_j,
 \;\;
  \mathbf{w}^{\pm}( \mathbf{x}) 
  :=
  \lim_{\epsilon\to 0^+} 
   \mathbf{w}( \mathbf{x} \mp \epsilon \mathbf{n}_j^+(\mathbf{x} )), \quad
   j=1,\ldots,N_{\rm f}. 
\end{equation}
Given the port reduced space $\widetilde{\mathcal{W}}\subset [L^2(\widetilde{\Gamma})]^3$, we also introduce the global port space over $\Gamma$
\begin{equation}
\label{eq:approx_space_control_multi}
\mathcal{W}^{\rm dd}:=\left\{
\mathbf{s}\in [L^2(\Gamma)]^3 \,:\,
\mathbf{s}|_{\Gamma_j} \circ 
 \Psi_j \in \widetilde{\mathcal{W}}, \;\; j=1,\ldots,N_{\rm f}
\right\}.
\end{equation}

We handle geometry deformations using the \emph{discretize-then-map}  approach (cf.  \cite{taddei2021discretize}).
Given the FE field $\mathbf{w}\in \mathcal{X}_{i}^{\rm hf} $, we denote by 
$\widetilde{\mathbf{w}} \in \widetilde{\mathcal{X}}_{L_i}^{\rm hf}$ the corresponding field in the reference configuration; the two fields share the same FE vector.
We introduce norms in the reference components
\begin{equation}
\label{eq:reference_norm_X}
\|   \mathbf{w} = {\rm vec} \left(
  \mathbf{u}, p \right)  \|_{\widetilde{\mathcal{X}}_k}^2
  =
  \int_{\widetilde{\Omega}^k} \nabla \mathbf{u} : 
  \nabla \mathbf{u}  + |\mathbf{u}|^2 + p^2 \, dx,
  \quad
   \vertiii{ \mathbf{s} = {\rm vec} \left(
  \mathbf{g}, h \right) }_{\widetilde{\Gamma}}^2
=
\int_{\widetilde{\Gamma}}  
\big| \nabla_{\widetilde{\Gamma}}   \mathbf{g}\big| ^2
+  | \mathbf{g} |^2
  + h^2 \, dx,
\end{equation}
for $k=1,\ldots,N_{\rm c}$. Then, we define the corresponding norms for the instantiated components that are obtained by applying the prescribed deformation
\begin{equation}
\label{eq:instantiated_norm_X}
\|   \mathbf{w}    \|_{{\mathcal{X}}_i} 
  :=
  \| 
\mathbf{w}|_{\Omega_i} \circ 
 \Phi^{L_i}(\cdot,\mu_i)   
      \|_{  \widetilde{\mathcal{X}}_{L_i}   },
\;\; i=1,\ldots,N_{\rm dd},
\quad
\vertiii{   \mathbf{s}  }^2
=
\sum_{j=1}^{N_{\rm f}}       
   \vertiii{ \mathbf{s}
|_{\Gamma_j} \circ 
 \Psi_j      }_{\widetilde{\Gamma}}^2.   
\end{equation}
Note that the algebraic norms associated with  \eqref{eq:instantiated_norm_X} are independent of the geometric  parameters that enter in the mappings $\{  \Phi^{L_i}(\cdot,\mu_i)     \}_i$: there exist   indeed  $N_{\rm c}$ matrices
$\mathbf{X}_{1},\ldots,
\mathbf{X}_{N_{\rm c}}$ such that
$\|   \mathbf{w}    \|_{{\mathcal{X}}_i} 
=
\sqrt{\underline{\mathbf{w}}^\top \mathbf{X}_{L_i} \underline{\mathbf{w}}}$ for $i=1,\ldots,N_{\rm dd}$.
This observation simplifies the implementation of the dual residual norm used in the adaptive strategy (cf. \eqref{eq:local_error}).
Similarly, the variational forms 
associated with the PDE problem are defined for each archetype component and then mapped to obtain the variational forms for each instantiated component.
We define the forms
$\widetilde{\mathcal{R}}_k^{\rm hf}:\widetilde{\mathcal{X}}_k^{\rm hf} \times \widetilde{\mathcal{X}}_{k,0}^{\rm hf} 
\times \mathcal{P}_k \times \mathbb{R}_+
\to \mathbb{R}$ such that 
\begin{equation}
\label{eq:change_variable_calR}
\widetilde{\mathcal{R}}_{L_i}^{\rm hf}( \widetilde{\mathbf{w}}, \widetilde{\mathbf{z}};  \mu_i, {\rm Re} )
=
{\mathcal{R}}_i^{\rm hf}( {\mathbf{w}}, {\mathbf{z}}),
\quad
\forall \, 
\mathbf{w}\in \mathcal{X}_{i}^{\rm hf},
\;\;
\mathbf{z} \in \mathcal{X}_{i,0}^{\rm hf}.
\end{equation}
We further define the boundary form
\begin{equation}
\label{eq:change_variable_calE}
\widetilde{\mathcal{E}}_{L_i,\ell}^{\rm hf}( \widetilde{\bs{\eta}}, \widetilde{\mathbf{z}}, \mu_i )
=
\int_{\Gamma_{j_{i,\ell}  }}
\widetilde{\bs{\eta}} \circ \Psi_j^{-1} \cdot 
\mathbf{z}  \circ \Phi_{i}^{-1}  \, dx
\quad
{\rm where} \;
\Phi_{i} :=  \Phi^{L_i}(\cdot; \mu_i),
\quad
\forall \, 
\widetilde{\bs{\eta}} \in L^2(\widetilde{\Gamma}_j; \mathbb{R}^3),
\;\;
\mathbf{z} \in \mathcal{X}_{i,0}^{\rm hf},
\end{equation}
where $j_{i,\ell} \in \{1,\ldots,N_{\rm f}\}$ is the  index 
(in the global numbering) 
of the $\ell$-th port of the $i$-th component of the system.

We have now the elements to present the DD Galerkin formulation:
\begin{subequations}
\label{eq:DD_formulation}
\begin{equation}
\label{eq:DD_formulation_a}
\min_{\mathbf{w} \in \mathcal{X}^{\rm dd}, \mathbf{s}\in \mathcal{W}^{\rm dd} } 
\frac{1}{2}  \int_{\Gamma} | \llbracket   \mathbf{w} \rrbracket  |^2 \, dx
+\frac{\delta}{2} \vertiii{\mathbf{s}}^2 \; \; {\rm s.t.} 
\; \; 
\mathcal{R}_i^{\rm hf}( \mathbf{w} ,  \mathbf{z}   )
+\mathcal{E}_i^{\rm hf}(\mathbf{s}, \mathbf{z})
= 0 \;\;\forall \, \mathbf{z}\in \mathcal{Z}_i,
\quad
i=1,\ldots,N_{\rm dd};
\end{equation}
where $\mathcal{Z}_i = \{ \boldsymbol{\zeta}\in [H^1(\Omega_i)]^3 \, : \,   \boldsymbol{\zeta}\circ   
 \Phi^{L_i}(\cdot,\mu_i) \in \widetilde{\mathcal{Z}}_{L_i}\}$ and
 \begin{equation}
\label{eq:DD_formulation_b}
\mathcal{E}_i^{\rm hf}(\mathbf{s} ,  \mathbf{z})
=
\sum_{j\in \mathcal{I}_i^{\Gamma}} \int_{\Gamma_j} 
\mathbf{s}  \cdot  \mathbf{z} \, dx,
\end{equation}
 for $i=1,\ldots,N_{\rm dd}$.
Formulation \eqref{eq:DD_formulation_a} can be  adapted to cope with Petrov-Galerkin ROMs using the strategy outlined in   section \ref{sec:hybrid_solver}: we omit the details.
\end{subequations}

Given the estimate $(\mathbf{w}^\star,\mathbf{s}^\star )$ of the solution to \eqref{eq:DD_formulation_a}, we devise two error indicators to assess its accuracy;  the indicators are employed in section \ref{sec:adaptive_enrichment} to drive the enrichment strategy.
First, we define the local errors
\begin{equation}
\label{eq:local_error}
e_i:= \sup_{\mathbf{z}\in \mathcal{X}_{i,0}^{\rm hf}}
\frac{
\mathcal{R}_i^{\rm hf}( \mathbf{w}^\star ,  \mathbf{z}   )
+\mathcal{E}_i^{\rm hf}(\mathbf{s}^\star ,  \mathbf{z})
}{\| \mathbf{z}  \|_{{\mathcal{X}}_i} },
\quad
i=1,\ldots,N_{\rm dd}.
\end{equation}
The quantity $e_i$ measures the performance of the $i$-th ROM to approximate the solution to the Navier-Stokes equations for the control $\mathbf{s}^\star$. We further introduce the jump errors:
\begin{equation}
\label{eq:jump_error}
e_j^{\rm jump}:=
\sqrt{
\int_{\Gamma_j} | \llbracket   \mathbf{w} \rrbracket  |^2 \, dx},
\quad
j=1,\ldots,N_{\rm f}.
\end{equation}
The indicator \eqref{eq:jump_error} controls the jump of the state estimate at the interfaces: the value of 
$e_j^{\rm jump}$ can thus be interpreted as the measure of the ability of the control to nullify the jump at the $j$-th interface of the domain. 

\begin{remark}
In order to enhance the compressibility of the local state and control manifolds, following \cite{bui2023component}, in the numerical experiments, we   consider the approximation spaces
\begin{subequations}
\label{eq:rotated_space}
\begin{equation}
\label{eq:rotated_space_a}
\begin{array}{l}
\displaystyle{
\mathcal{X}^{\rm dd}:=\left\{
\mathbf{w}\in L^2(\Omega; \mathbb{R}^3) \,:\,
\mathbf{A}(\theta_i) \mathbf{w}|_{\Omega_i} \circ 
 \Phi^{L_i}(\cdot,\mu_i) \in \widetilde{\mathcal{Z}}_{L_i}^{\rm dir}, \;\; i=1,\ldots,N_{\rm dd}
\right\};
}
\\[3mm]
\displaystyle{
\mathcal{W}^{\rm dd}:=\left\{
\mathbf{s}\in L^2(\Gamma; \mathbb{R}^3) \,:\,
\mathbf{A}(\omega_j) \mathbf{s}|_{\Gamma_j} \circ 
 \Psi_j  \in \widetilde{\mathcal{W}}, \;\; j=1,\ldots,N_{\rm f}
\right\};
}
\\ 
\end{array}
\end{equation}
where $\theta_i$ (resp., $\omega_i$) is the   angle between  the inlet
port of the $i$-th deformed component 
$\Omega_i$
 (resp., the $j$-th port $\Gamma_j$)
and the $x_1$ axis, and
\begin{equation}
\label{eq:rotated_space_b}
\mathbf{A}(\theta)=\left[
\begin{array}{ccc}
\cos(\theta) & -\sin(\theta)  & 0 \\
\sin(\theta) &  \cos(\theta)  & 0 \\
0 & 0& 1 \\
\end{array}
\right].
\end{equation}
\end{subequations}
We remark that 
several authors have considered more sophisticated (Piola) transformations to improve  the compressibility of solution manifolds in internal flows, (e.g.  \cite{Lovgren_Maday_Ronquist_2006}): in this respect, our choice is a compromise between accuracy and simplicity of implementation.
\end{remark}

\subsection{Pairwise training for the control variables}
\label{sec:pairwise_control} 
Following 
\cite{Eftang_Patera_2013, benaceur2022port},
we pursue a pairwise-training approach to generate the port space $\widetilde{\mathcal{W}}$. We perform HF simulations for systems of two components that represent all possible connections (channel-channel, channel-junction, junction-junction, junction-channel) based on random Dirichlet boundary conditions at the inflow, random Neumann conditions at the outflow, and a random selection of the Reynolds number and the geometric parameters in prescribed parameter ranges (cf. Figure \ref{fig:port_training_example}).
The HF data for the ports are   retrieved and stored, and finally the port space $\widetilde{\mathcal{W}}$ is  constructed using  POD.
Recalling \eqref{eq:rotated_space}, the HF data are rotated using \eqref{eq:rotated_space_b} before applying the compression technique.

Similarly to \cite{Eftang_Patera_2013,hoang2021domain}, we consider the inlet velocity
\begin{equation}
\label{eq:dirichlet_random}
\mathbf{u}_{\rm in}(y)
=
- 
\frac{{\rm Re}}{{\rm Re}_{\rm ref}}
\left( u_0(y) + 
\delta_u \sum_{k=1}^R 
\frac{c_k}{k^2}
P_k(-1+2y)
 \right) \mathbf{n},
\end{equation}
where 
${\rm Re}\sim {\rm Uniform}({\rm Re}_{\rm min}, {\rm Re}_{\rm max})$, $\{ P_k \}_k$ are zero-flowrate weighted polynomials
(cf. \cite[section 3.1.1]{benaceur2022port})
$$
P_k(y)=\left\{
\begin{array}{ll}
(1-y^2)y, & \text{if } k=1, \\
(1-y^2)(5y^2-1), & \text{if } k=2, \\
(1-y^2)\mathfrak{L}_k(y), & \text{if } 3\leq k\leq R,
\end{array}
\right.
$$
and $\{ \mathfrak{L}_k \}_k$ are the  Legendre polynomials.
The coefficients of the expansion are sampled from a standard Gaussian distribution, $c_1,\ldots,c_R \overset{\rm iid}{\sim} \mathcal{N}(0,1)$,
$\mathbf{n}$ denotes the outward normal to $\Omega$ on the inlet boundary,
$y\in (0,1)$ is the curvilinear coordinate, 
$u_0(y)= 4 y(1-y)$ is the Poiseuille velocity profile,  
the coefficient $\delta_u$  is selected \emph{a posteriori} to  ensure  that the inflow is positive for all $y\in (0,1)$.
Similarly, we prescribe the outward flux as
\begin{equation}
\label{eq:neumann_random}
\mathbf{g}_{\rm out}(y)
=
\left( g_0 + \delta_g \sum_{k=1}^R c_k^{\rm out}  \mathfrak{L}_k(-1+2y)
 \right) \mathbf{n},
 \quad
 c_1^{\rm out}, \ldots, 
c_R^{\rm out} \overset{\rm iid}{\sim} \mathcal{N}(0,1),
\end{equation}
where $g_0 \sim {\rm Uniform}({ g_0}_{\rm min}, {g_0}_{\rm max})$, and we choose the coefficient $\delta_g$ to prevent reverse flow.

\begin{figure}[H]
\includegraphics[width=9cm]{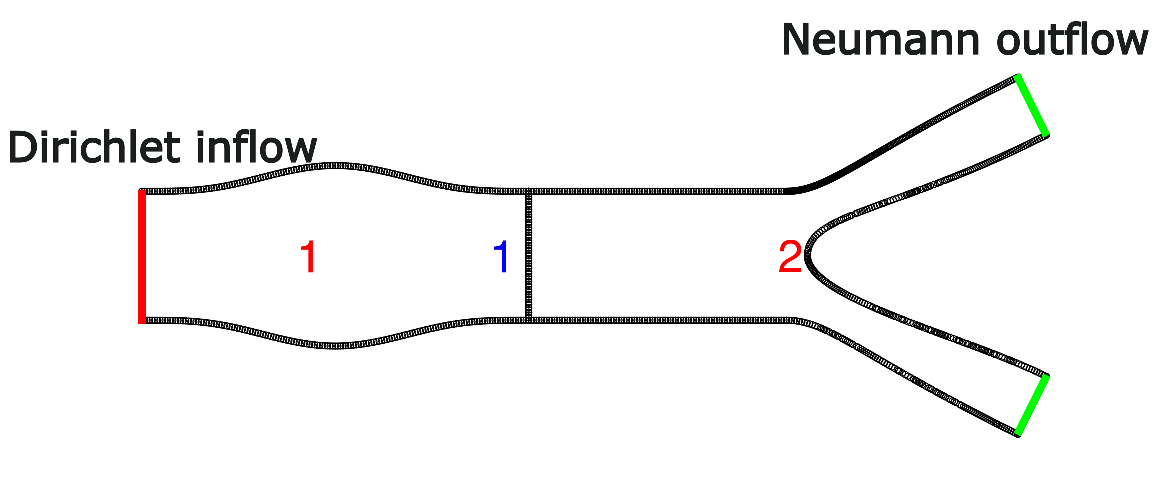}
\centering
\caption{channel-junction connection for the training of $\mathbf{s}$.}
\label{fig:port_training_example}
\end{figure}

\subsection{Localized training  for the state variables}
\label{sec:localized_state}
After having built the reduced space for the control, we repeatedly solve \eqref{eq:DD_formulation_a} for several random configurations and several parameter values to acquire datasets of simulations for each archetype component. 
Thanks to port reduction, the computational cost of the global problem is significantly reduced if compared with the full HF model; nevertheless, we choose to consider systems with a moderate number of components (up to four) to further reduce offline costs. 
The HF data for components of the same type are mapped in the reference configurations,
rotated through \eqref{eq:rotated_space_b}, 
 and are then used to build the local reduced spaces
$\widetilde{\mathcal{Z}}_1, \ldots,\widetilde{\mathcal{Z}}_{N_{\rm c}}$.

We observe that the training strategy is not fully local since it requires to assemble systems with up to four components. In our experience, the practical implementation of a fully localized training strategy 
for incompressible flows
is extremely challenging   due to the need to ensure that the fluid flows from left to right and that the prescribed Neumann conditions  lead to physical velocities.
The choice of considering   global training based on a reduced control space for systems of moderate dimension represents a trade-off between offline efficiency and accuracy. The adaptive strategy  presented in the next section provides a systematic way to improve the quality of the local reduced spaces.

\subsection{Adaptive enrichment}
\label{sec:adaptive_enrichment}

In Algorithm \ref{alg:online_enrichment}, we present the full adaptive strategy for the construction of the reduced spaces.
The procedure extends the method introduced in 
\cite{Smetana2023}; 
to clarify the presentation, we postpone two steps of the algorithm to sections \ref{sec:local_solves} and \ref{sec:enrichment_state_spaces}.

\begin{algorithm}[H]                      
\caption{{Adaptive} enrichment procedure.}     
\label{alg:online_enrichment}     

\begin{algorithmic}[1]
\State
Generate the reduced space $\widetilde{\mathcal{W}}$ for the control through pairwise training 
(cf. section \ref{sec:pairwise_control}).
\smallskip

\State
Generate the local spaces $\{  \widetilde{\mathcal{Z}}_k \}_{k=1}^{N_{\rm c}}$ for the state through global training 
(cf. section \ref{sec:localized_state}).
\smallskip

\State
Enrich the reduced spaces $\{  \widetilde{\mathcal{Z}}_k \}_{k=1}^{N_{\rm c}}$ based on the port space $\widetilde{\mathcal{W}}$
(cf. section \ref{sec:enrichment_state_spaces}).
\smallskip

\State
Sample $n_{\rm train}^{\rm glo}$ global configurations, $\mathscr{P}_{\rm train}:=\{\mu^{j}\}_{j=1}^{n_{\rm train}^{\rm glo}}$.
\smallskip

\State
(if LSPG projection is employed)
Build the empirical test space (cf. section \ref{sec:local_ROM})
\smallskip

\For {$it=1, \ldots, \texttt{maxit}$ }
\State
Initialize the datasets 
$\mathscr{D}_{(1)}=\ldots = \mathscr{D}_{(N_{\rm c})} =\emptyset$ and $\mathscr{D}_{{\mathbf{s}}}=\emptyset$.

\For {$\mu \in \mathscr{P}_{\rm train}$ }
\State
Compute the reduced solution using the CB-ROM solver (cf. \eqref{eq:DD_formulation_a}).
\smallskip

\State
Compute local residuals $\{ e_i \}_{i=1}^{N_{\rm dd}^\mu}$ (cf. \eqref{eq:local_error})
and the jumps  $\{ e_j^{\rm port}
\}_{j=1}^{N_{\rm f}^\mu}$ (cf. \eqref{eq:jump_error})
\smallskip

\State
Mark the $m_{\mathbf{w}}$ instantiated components with the largest residuals of each type $\{ \mathtt{I}_{\rm mark}^{\mu,(k)} \}_{k=1}^{N_{\rm c}}$.
\smallskip

\State
Mark the $m_{\mathbf{s}}$ instantiated ports with the largest port jumps of each type $\mathtt{I}_{\rm mark}^{\mu,\rm p}$.
\State
Update the datasets 
$\mathscr{D}_{(1)}, \ldots ,  \mathscr{D}_{(N_{\rm c})}$ and $\mathscr{D}_{{\mathbf{s}}}$
(cf. section \ref{sec:local_solves})
\EndFor

\State
Update the port  POD space $\widetilde{\mathcal{W}}=
\widetilde{\mathcal{W}} \oplus 
\text{POD}\left(\left\{ \Pi_{\widetilde{\mathcal{W}}^\perp}
\widetilde{\mathbf{s}}:  \widetilde{\mathbf{s}} \in \mathscr{D}_{{\mathbf{s}}}\right\},
\vertiii{\cdot}_{\widetilde{\Gamma}}, n^{\rm glo}\right)$.
\smallskip

\State
Update the reduced    spaces
 $\widetilde{\mathcal{Z}}_{k}=
\widetilde{\mathcal{Z}}_{k} \oplus
\text{POD}\left(\left\{ \Pi_{\widetilde{\mathcal{Z}}_k^\perp}
\widetilde{\mathbf{w}}:  \widetilde{\mathbf{w}} \in \mathscr{D}_{(k)}\right\},
\| \cdot \|_{\widetilde{\mathcal{X}_k}}, n^{\rm glo}\right)$, 
  $k=1,\ldots, N_{\rm c}$.
  \smallskip
  
\State
(Optional)
Enrich the reduced spaces $\{  \widetilde{\mathcal{Z}}_k \}_{k=1}^{N_{\rm c}}$ based on the port space
(cf. section \ref{sec:enrichment_state_spaces}).
\smallskip

\State
(if LSPG projection is employed)
Update the empirical test space (cf. section \ref{sec:local_ROM})

\EndFor
\end{algorithmic}
\end{algorithm}

As in 
\cite{Smetana2023}, we add $m_{\mathbf{w}}$ (resp., $m_{\mathbf{s}}$) snapshots to the
state (resp., control) datasets for each element of $\mu\in \mathcal{P}_{\rm train}$, instead of selecting the marked elements after having computed the local indicators for all configurations: this choice avoids the storage of all reduced global solutions and ultimately simplifies the implementation. In our experience, the enrichment of the state spaces is only needed for localized training (Line 3 of the Algorithm) but not after each update of the control space $\widetilde{\mathcal{W}}$ (Line 17 of the Algorithm): a possible explanation is that the enrichment step inherently couples the construction of the two spaces. Further numerical investigations are necessary to investigate this aspect.

Algorithm \ref{alg:online_enrichment} depends on several user-defined parameters.
The localized training of the control space depends on 
(i) the 
sampling distributions for the Dirichlet inflow boundary condition \eqref{eq:dirichlet_random} and for the Neumann outflow condition \eqref{eq:neumann_random};
(ii) the number $n_{\rm loc}^{\mathbf{s}}$ of samples; and 
(iii) the number $m_0$ of retained POD modes.
The localized training for the state variables depends on 
(i) the number $N_{\rm dd}$ components of the networks considered;
(ii) the number $n_{\rm loc}^{\mathbf{w}}$ of samples; and 
(iii) the number $n_0$ of retained POD modes for each archetype component.
The enrichment strategy depends on (i) the number $n'$ of added modes (cf. section \ref{sec:enrichment_state_spaces}).
The adaptive loop depends on
(i) the number $\texttt{maxit}$ of outlet loop iterations;
(ii) the number $n_{\rm train}^{\rm glo}$ of global configurations;
(iii) the numbers $m_{\mathbf{w}}$ and $m_{\mathbf{s}}$ of marked components and ports;
(iv) the number $n^{\rm glo},m^{\rm glo}$ of modes added at each iteration for state and control variables.
We envision that the selection of several  parameters can be automated: to provide a concrete reference, the parameters 
$n^{\rm glo},m^{\rm glo}$ can be updated based on a energy/projection criterion. Nevertheless, further investigations are necessary to provide actionable guidelines to select all the parameters.

\subsubsection{Computation of the local solutions}
\label{sec:local_solves}
Given the sampled port $\Gamma_j$, we solve the HF model with flux boundary conditions given by the control $\mathbf{s}^\star$ on the remaining port (cf. Figure \ref{fig:localsolve_example}) in the domain $\Omega^\star = \Omega_j^+\cup \Omega_j^-$ where 
$\Omega_j^+,  \Omega_j^-$ are the elements of the network that share $\Gamma_j$. 
Given the sampled component $\Omega_i$, we consider two separate strategies:
(i) we solve the global hybrid model in which we replace the local ROM with the local HF model in the sampled component, or 
(ii) we solve the HF model in the sampled component with boundary conditions prescribed by the control estimate $\mathbf{s}^\star$.
The first option is significantly less computationally expensive; however, we experienced some convergence issues for very inaccurate controls $\mathbf{s}^\star$. For this reason, in the numerical experiments,  we rely  on global hybrid solves for the first iteration of the algorithm and to fully local solves for the subsequent iterations. 

\begin{figure}[H]
\centering
\includegraphics[width=8.3cm]{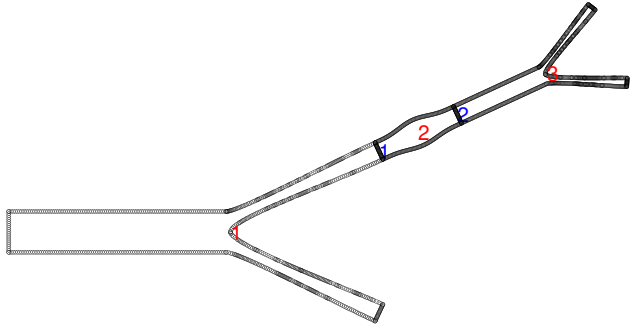}
 
\caption{computation of local solution.
Port update: if $\Gamma_1$ is the sampled port, we solve the HF model in the components $\Omega_1\cup \Omega_2$ with Neumann boundary conditions on the port $\Gamma_{2}$ given by the predicted control $\mathbf{s}^\star$ (cf. Line 9, Algorithm \ref{alg:online_enrichment}). State update: if $\Omega_2$ is the sampled component, we either solve the global problem using the HF discretization in $\Omega_2$ and the ROM discretization in $\Omega_3$ (option 1), or we solve the HF model in $\Omega_2$ with Neumann boundary conditions on the ports $\Gamma_1$ and $\Gamma_2$ given by  $\mathbf{s}^\star$ (option 2).}
\label{fig:localsolve_example}
\end{figure}

\subsubsection{Enrichment of the state spaces}
\label{sec:enrichment_state_spaces} 
It suffices to generalize the procedure of section \ref{sec:enrichment_basis}. We denote by $\{  \widetilde{\mathbf{w}}_\ell^k  \}_{\ell=1}^{n_{\rm train}^k}$ a dataset of snapshots associated with the $k$-th archetype component and the local parameters 
$\{  \mu_\ell^k  \}_{\ell=1}^{n_{\rm train}^k}$ and
$\{ {\rm Re}_\ell  \}_{\ell=1}^{n_{\rm train}^k}$.
The dataset  $\{  \widetilde{\mathbf{w}}_\ell^k  \}_{\ell=1}^{n_{\rm train}^k}$ is extracted by the global simulations performed in the internal loop (cf. Lines $8-14$) of Algorithm \ref{alg:online_enrichment} or from the simulations performed to generate the initial local space $\widetilde{\mathcal{Z}}_k$ (cf.  Line $2$).
 We denote by $\{ \boldsymbol{\eta}_{j}'  \}_{j=1}^m$ the newly-added modes of the port space;
we further recall the definitions of the 
local residuals and 
\eqref{eq:change_variable_calR}
boundary forms  
\eqref{eq:change_variable_calE}.
Then, we define $\widetilde{\mathbf{w}}_{\ell,j,q}^k$ such that
(compare with 
\eqref{eq:enriching_modes})
$$
\widetilde{\mathcal{R}}_{k}^{\rm hf}(\widetilde{\mathbf{w}}_\ell^k,  \mathbf{z};
\mu_\ell^k, 
{\rm Re}_\ell )
  +
\frac{\partial \widetilde{\mathcal{R}}_{k}^{\rm hf  }}{\partial \widetilde{\mathbf{w}}_{k}}\left[ 
\widetilde{\mathbf{w}}_\ell^k, \mu_\ell^k, 
{\rm Re}_\ell 
  \right]
\left( 
\widetilde{\mathbf{w}}_{\ell,j,q}^k
+ 
\widetilde{\bs{\Psi}}_{k,\mathbf{u}_{\rm in}}({\rm Re}_{\ell}) 
- 
\widetilde{\mathbf{w}}_\ell^k
\;  ,   \;    \mathbf{z}     \right)
  + \widetilde{\mathcal{E}}_{k,q}^{\rm hf}( \boldsymbol{\eta}_{j}' 
  \;  ,   \;       
  \mathbf{z}   )
  =
  0
  \quad
  \forall \, \mathbf{z}   \in \widetilde{\mathcal{X}}_{k,0}^{\rm hf},
$$
for 
$\ell=1, \ldots, n_{\rm train}^k$, 
$j=1,\ldots,m$, and $q=1,\ldots,N_{\rm port}^k$
($N_{\rm port}^k=2$ for the channel component, and $N_{\rm port}^k=3$ for the junction component). 
After having computed the snapshots $\{ \widetilde{\mathbf{w}}_{\ell,j,q}^k  \}_{\ell,j,q}$, we update the reduced space 
$\widetilde{\mathcal{Z}}_k$ 
with $n'$ modes 
using POD (cf.
\eqref{eq:hierarchicalPOD}).

\section{Numerical results}
\label{sec:numerical_res}
 We present  numerical results of the proposed method for the parameterized incompressible flow of 
 section \ref{sec:model_pb}. The  parameters are the Reynolds number and the geometric parameters $\alpha$ and $h_c$ introduced for each instantiated component.
 We consider  a \texttt{P}$2$ FE discretization with $1281$ degrees of freedom for the channel, and $3329$ degrees of freedom for the junction. The regularization constant $\delta$ is set equal  to $10^{-8}$.

\subsection{HF solver}
\label{sec:hf_results}
We present the HF results for the Reynolds number $\text{Re}=100$ and the geometric configuration shown in Figure \ref{fig:instantiation_example}(b). 
In Figure \ref{fig:hf_new_control}(a)-(b)-(c), we show the solution to the global HF problem (i.e., without domain decomposition) for the x-direction velocity, y-direction velocity, and the pressure, respectively. Figures \ref{fig:hf_new_control}(d)-(e)-(f) illustrate the difference between the solution to the global problem and the solution
to the (multi-component generalization of the) DD formulation  \eqref{eq:lsq_discr_final_a}. 
Our new formulation  exhibits high accuracy, with a pointwise error of the order of $10^{-6}$ for the three variables. Here, we employ the SQP method introduced in section \ref{sec:FOM_sqp}; GNM (cf. section \ref{sec:FOM_gnm}) does not converge for this value of  the  Reynolds number.
For the solution to the DD problem, the  global prediction  at the interfaces is obtained by averaging   the solution in the two neighboring sub-domains.

\begin{figure}[H]
  \centering
  
\subfloat[$u_x^{\rm fe}$]{\includegraphics[width=0.33\textwidth]{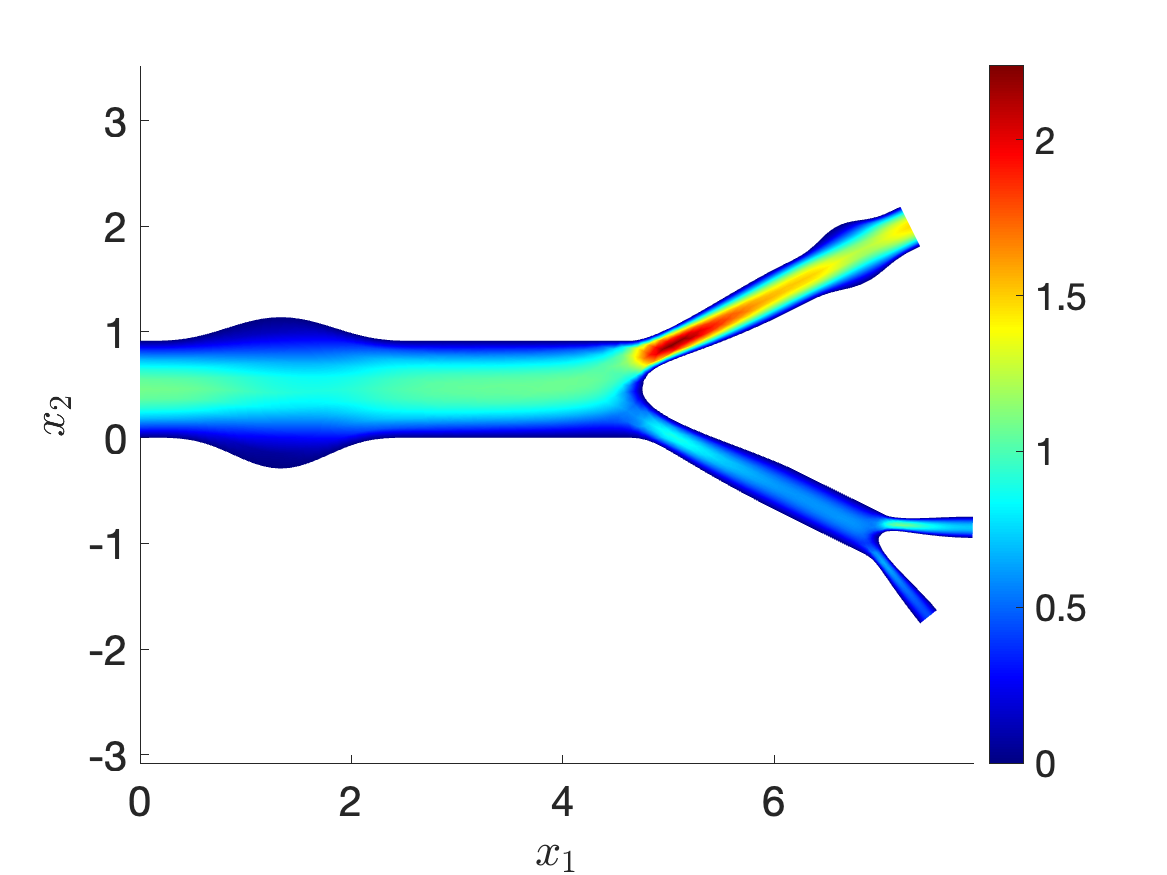}}
~~
\subfloat[$u_y^{\rm fe}$]{\includegraphics[width=0.33\textwidth]{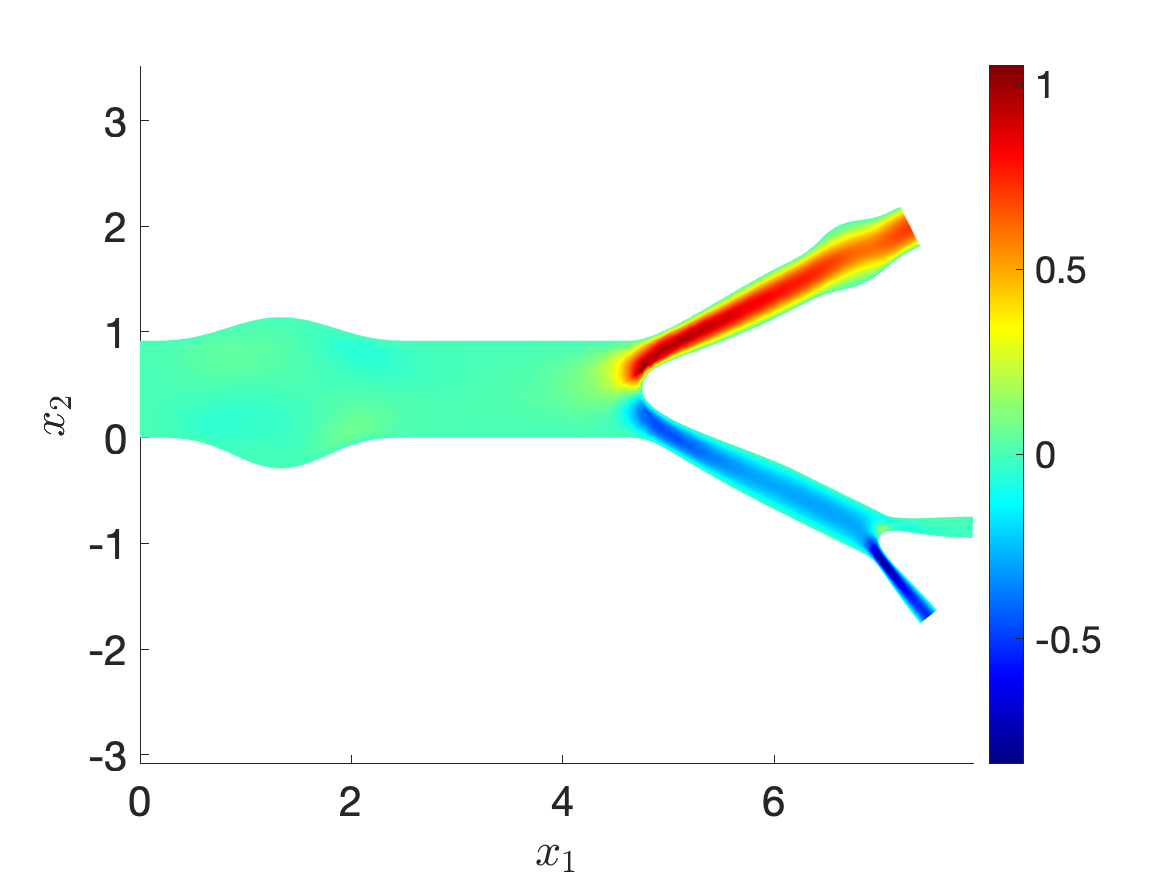}}
~~
\subfloat[$p^{\rm fe}$]{\includegraphics[width=0.33\textwidth]{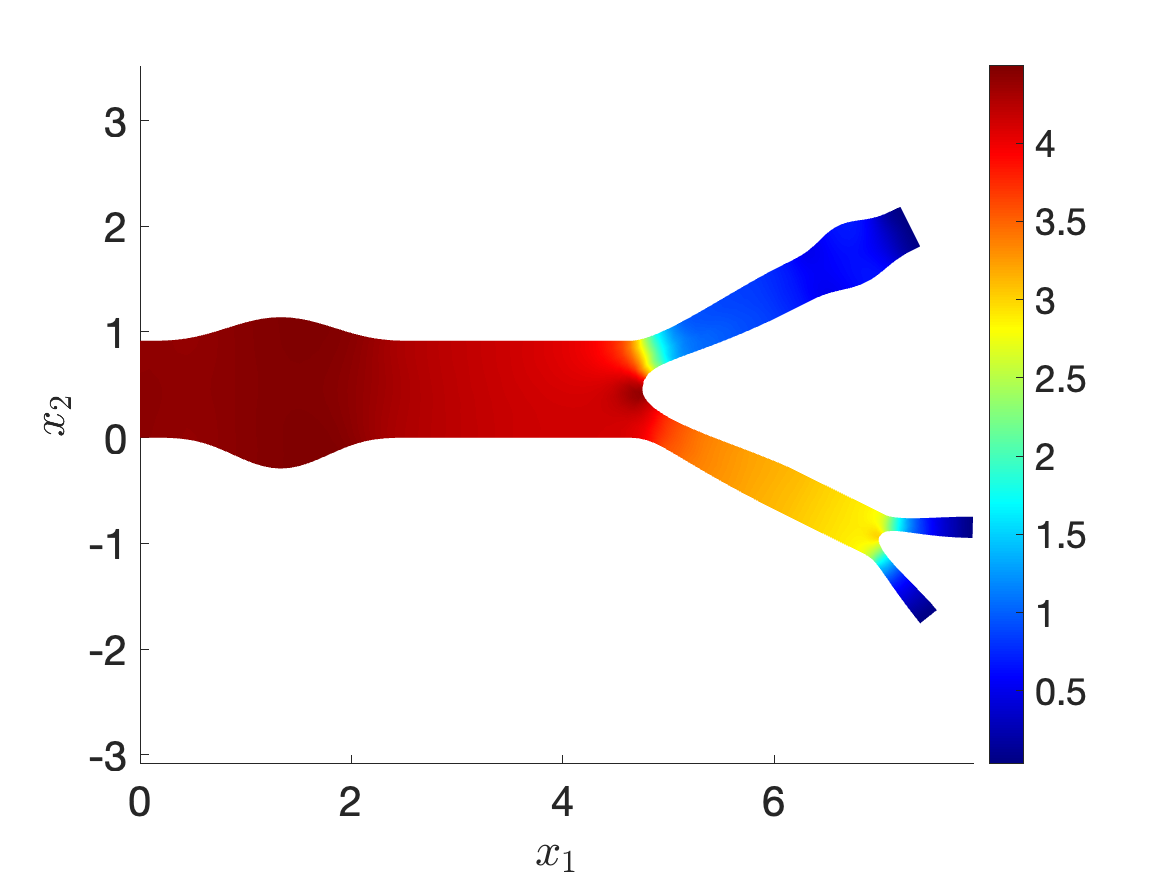}}

\subfloat[$u_x^{\rm fe}-u_x^{\rm dd}$]{\includegraphics[width=0.33\textwidth]{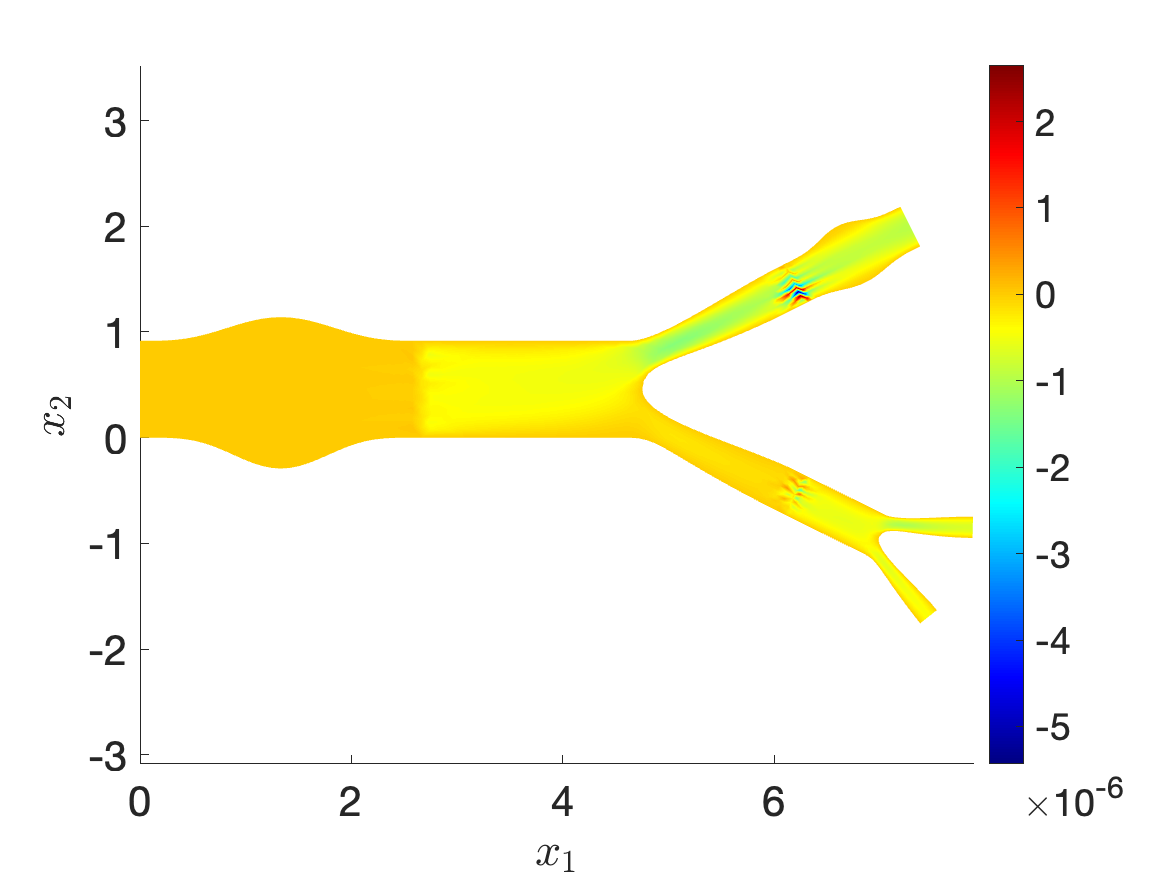}}
~~
\subfloat[$u_y^{\rm fe}-u_y^{\rm dd}$]{\includegraphics[width=0.33\textwidth]{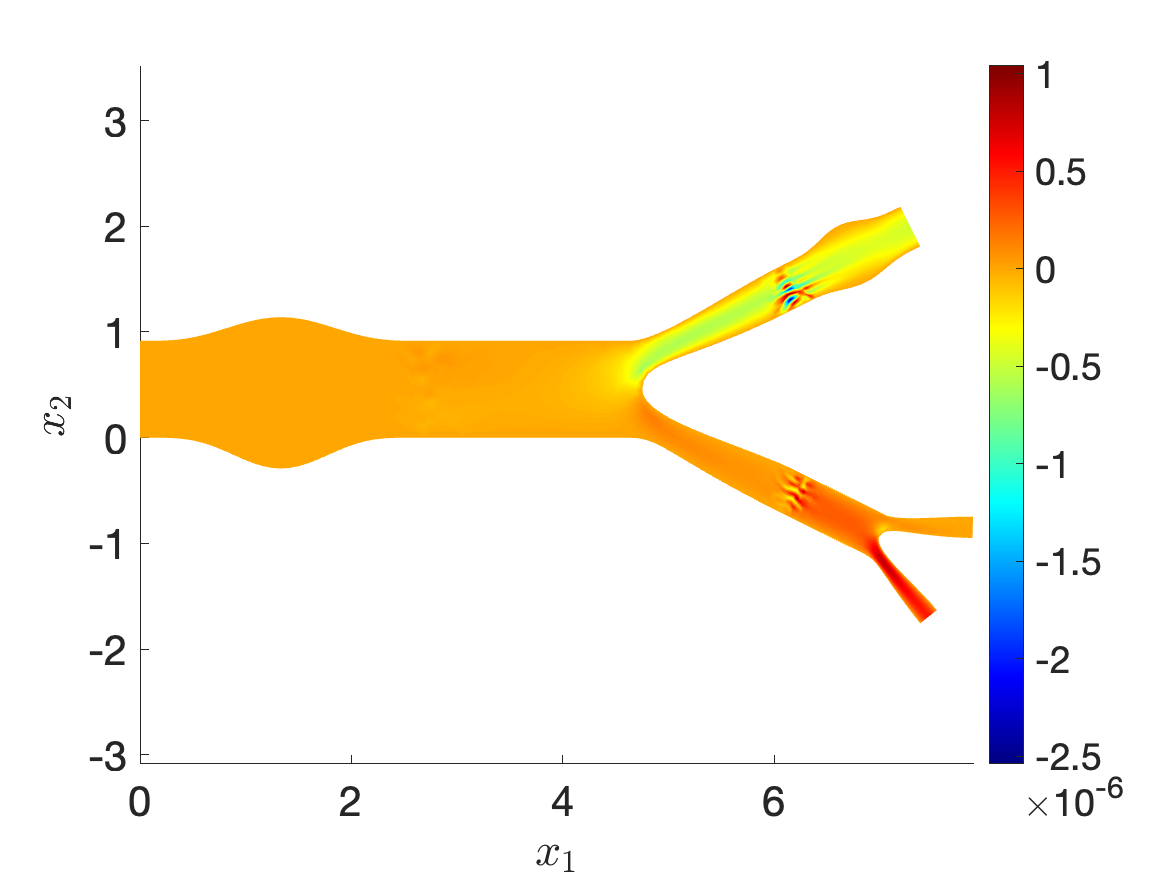}}
  ~~
\subfloat[$p^{\rm fe}-p^{\rm dd}$]{\includegraphics[width=0.33\textwidth]{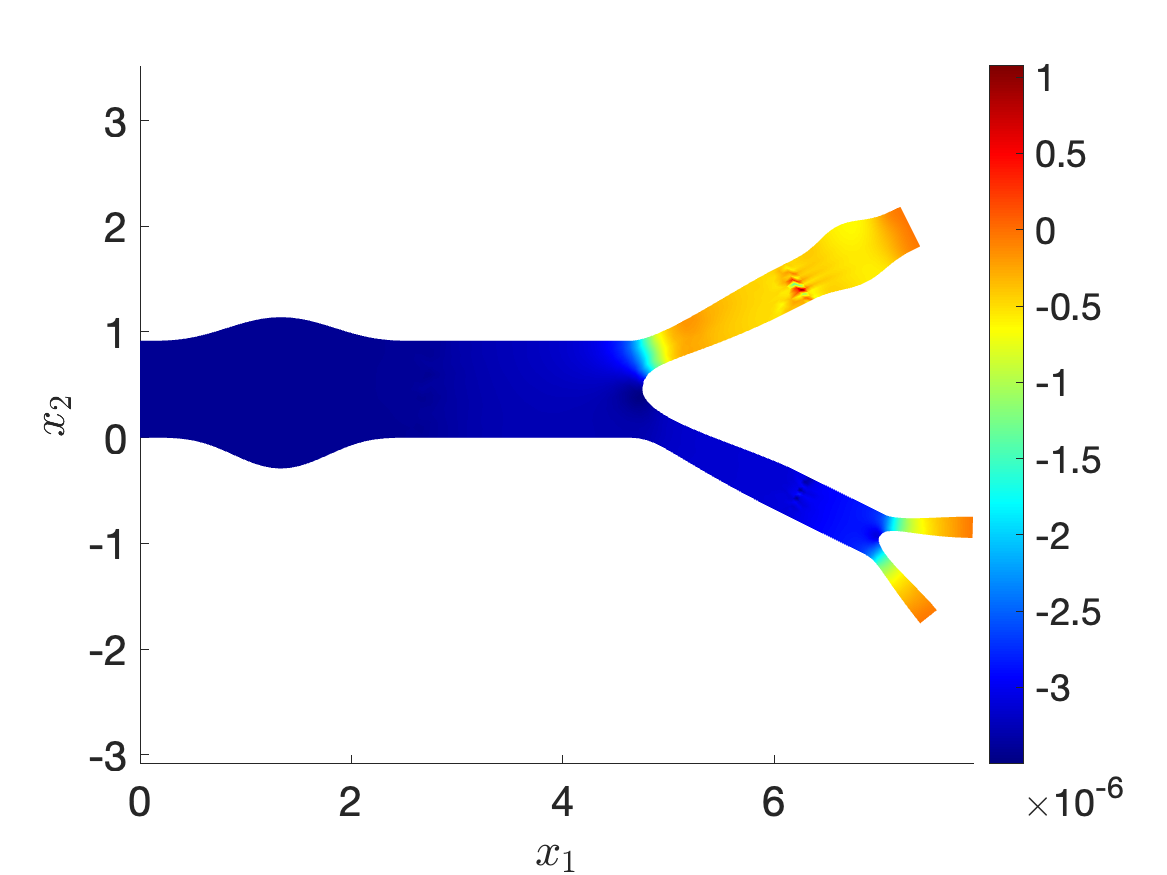}}
  
  \caption{HF formulation.
(a)-(b)-(c) Behavior of the solution to the monolithic FE problem.
(d)-(e)-(f) difference between the monolithic FE solution and the DD solution based on \eqref{eq:lsq_discr_final_a}.}
  \label{fig:hf_new_control}
\end{figure}

In Figure \ref{fig:hf_old_control}, we present the comparison between the monolithic FE solution and the solution  to the DD formulation \eqref{eq:lsq_discr_standard}. The results 
of Figure \ref{fig:hf_old_control} show much larger pointwise errors for both velocity and pressure --- the error for the pressure is $\mathcal{O}(10^{-2})$ as opposed to $\mathcal{O}(10^{-6})$.
This result justifies the addition of the control $h$ for the continuity equation.

In Figure \ref{fig:hf_jump}, we present the variable jump across the interfaces for the new formulation
\eqref{eq:lsq_discr_final_a} and the standard formulation \eqref{eq:lsq_discr_standard}.
For \eqref{eq:lsq_discr_standard}, the jump of the velocity field is modest, but it is  significant ($\mathcal{O}(10^{-1})$) for the pressure.
In contrast, 
for \eqref{eq:lsq_discr_final_a}, 
the jump of both velocity and pressure is extremely modest. These results further corroborate the introduction of the control   $h$ for the continuity equation.

\begin{figure}[H]
  \centering

  \subfloat[$u_x^{\rm fe}-u_x^{\rm dd}$]{\includegraphics[width=0.33\textwidth]{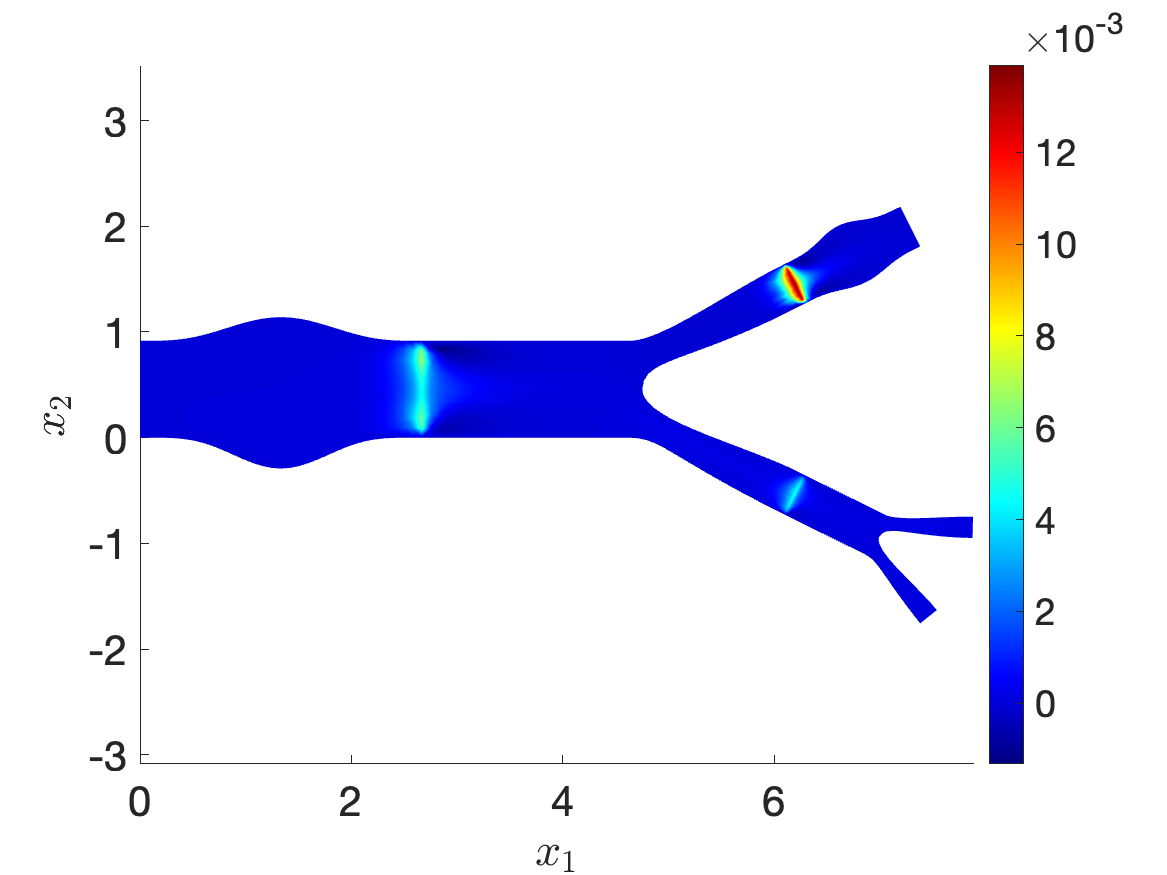}}
~~
  \subfloat[$u_y^{\rm fe}-u_y^{\rm dd}$]{\includegraphics[width=0.33\textwidth]{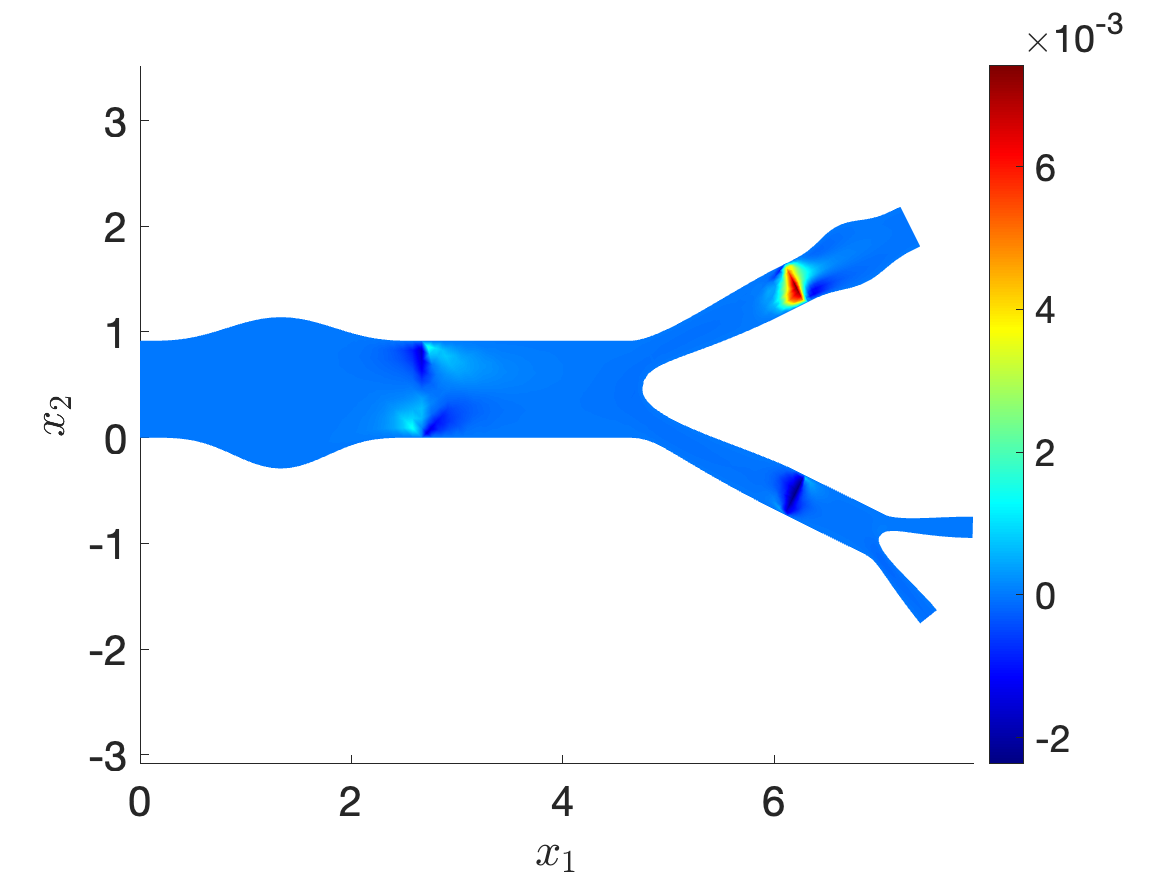}}
  ~~
  \subfloat[$p_x^{\rm fe}-p_x^{\rm dd}$]{\includegraphics[width=0.33\textwidth]{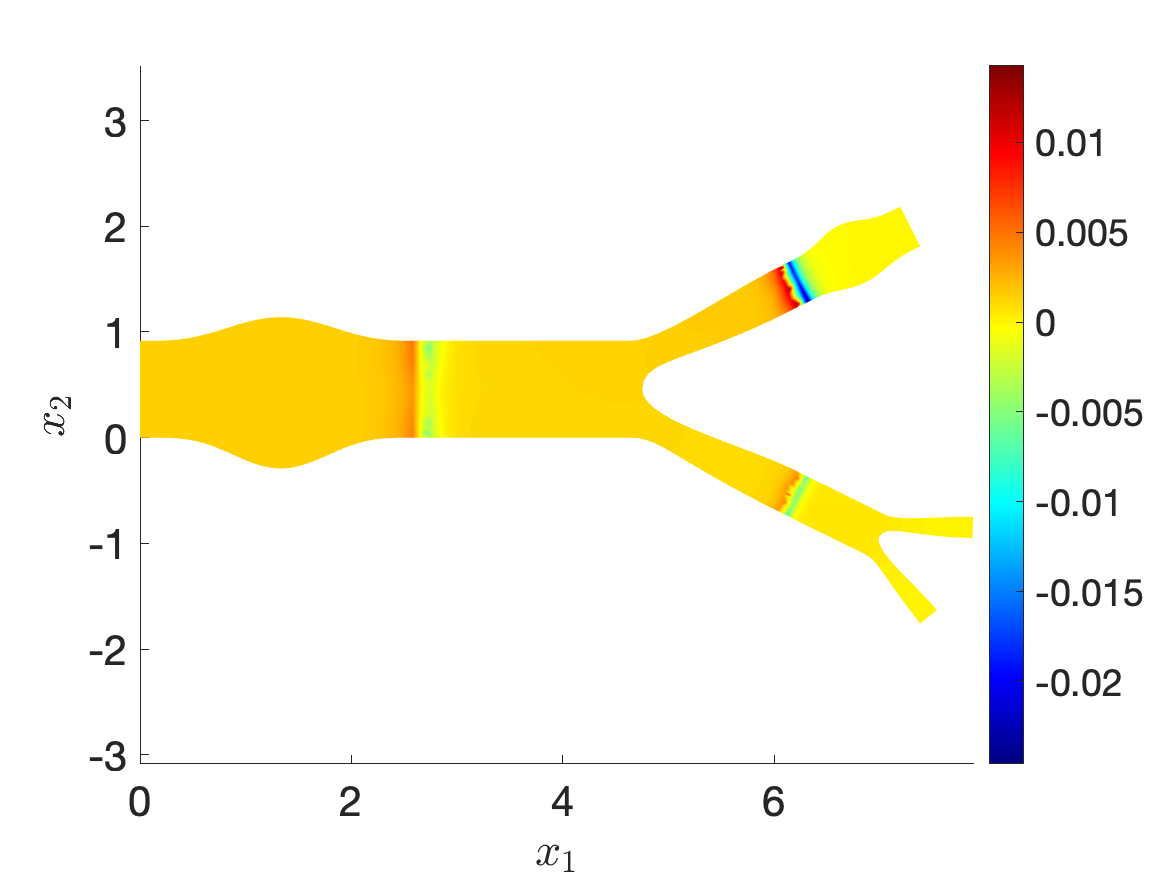}}
  
  \caption{HF formulation.
Difference between the monolithic FE solution and the DD solution based on   
\eqref{eq:lsq_discr_standard}.}
  \label{fig:hf_old_control}
\end{figure}

\begin{figure}[H]
  \centering
  
\subfloat[$\llbracket  u_x \rrbracket$]{\includegraphics[width=0.33\textwidth]{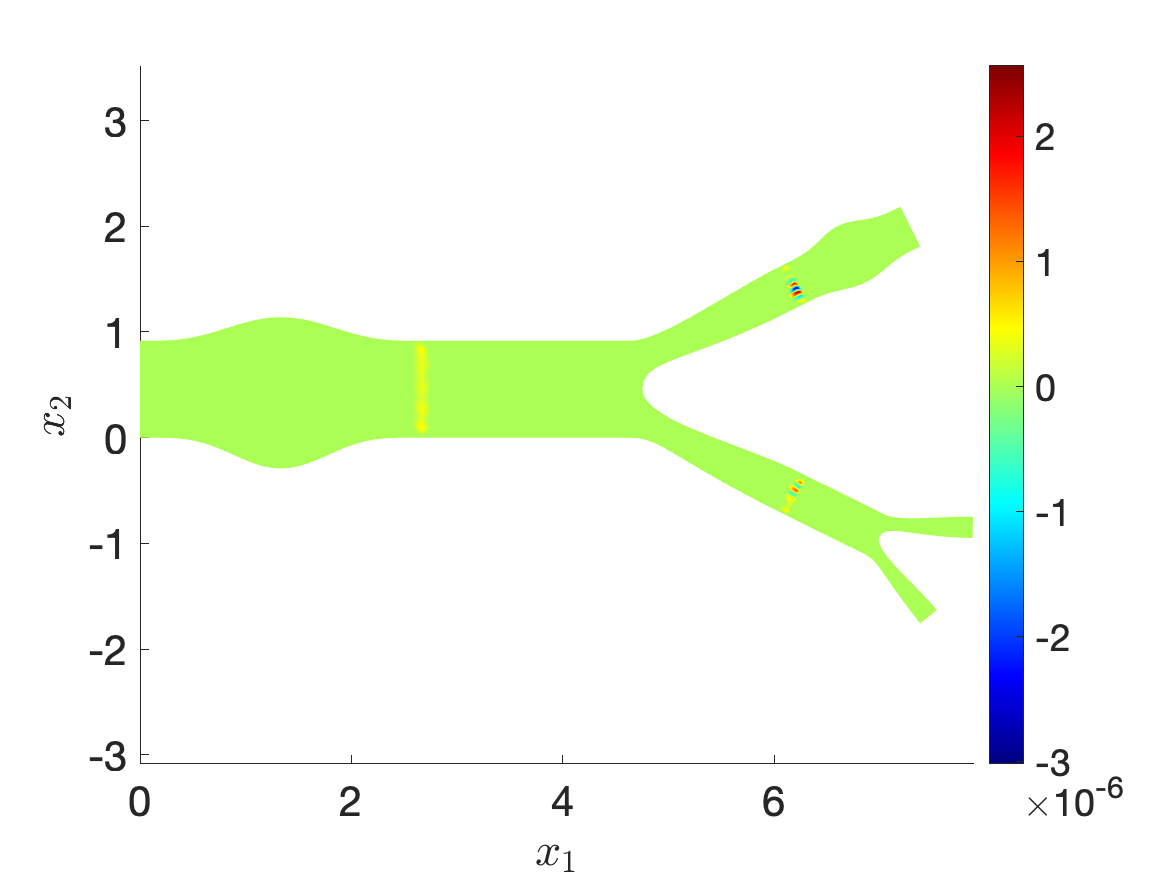}\label{fig:subfigA}}
~~
\subfloat[$\llbracket  u_y \rrbracket$]{\includegraphics[width=0.33\textwidth]{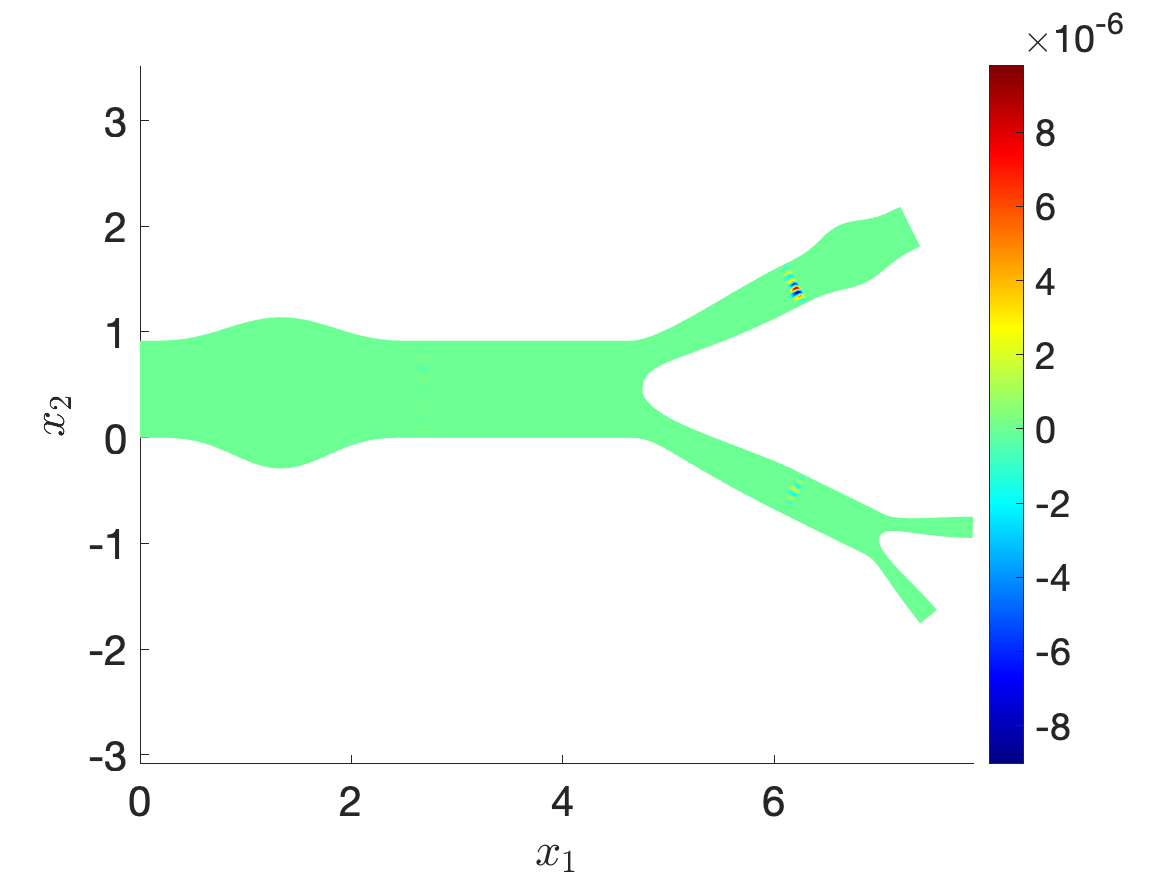}}
 ~~
\subfloat[$\llbracket  p \rrbracket$]{\includegraphics[width=0.33\textwidth]{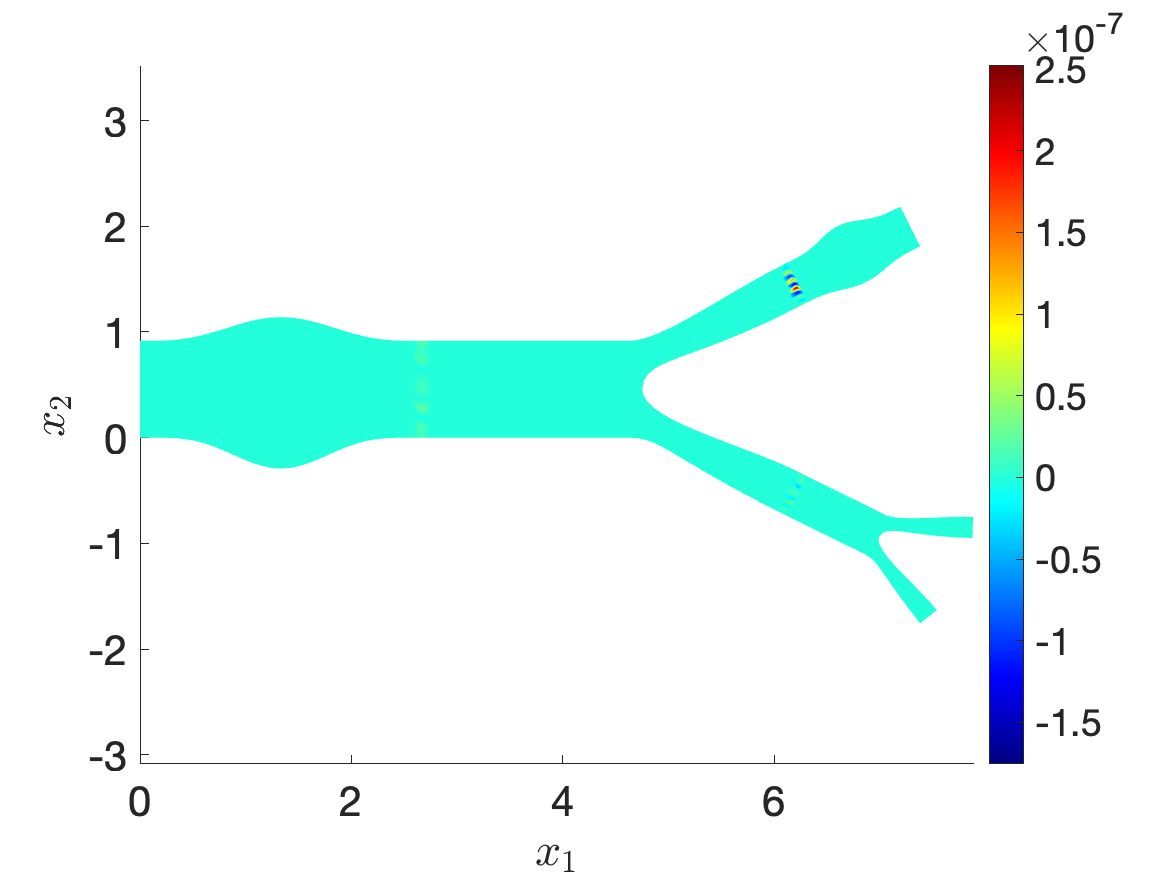}}
  
\subfloat[$\llbracket  u_x \rrbracket$]{\includegraphics[width=0.33\textwidth]{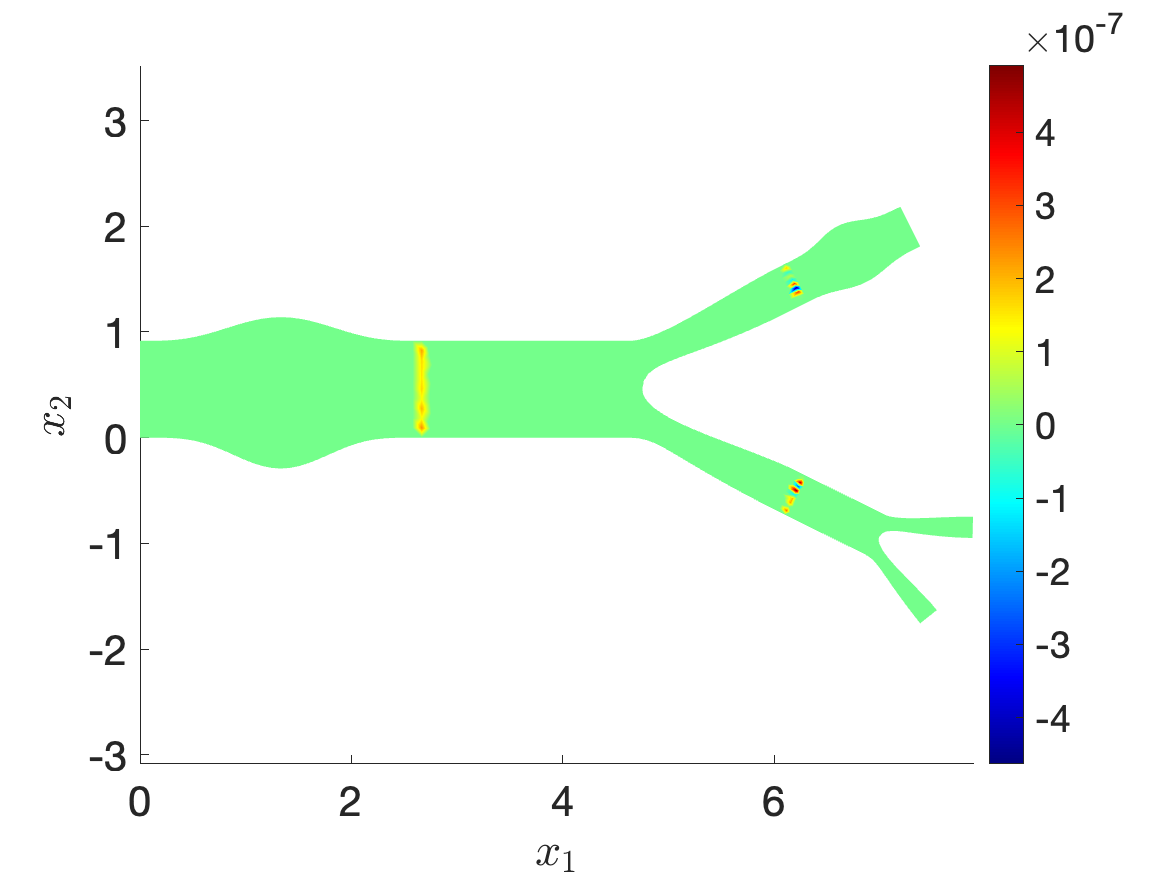}}
~~
\subfloat[$\llbracket  u_y \rrbracket$]{\includegraphics[width=0.33\textwidth]{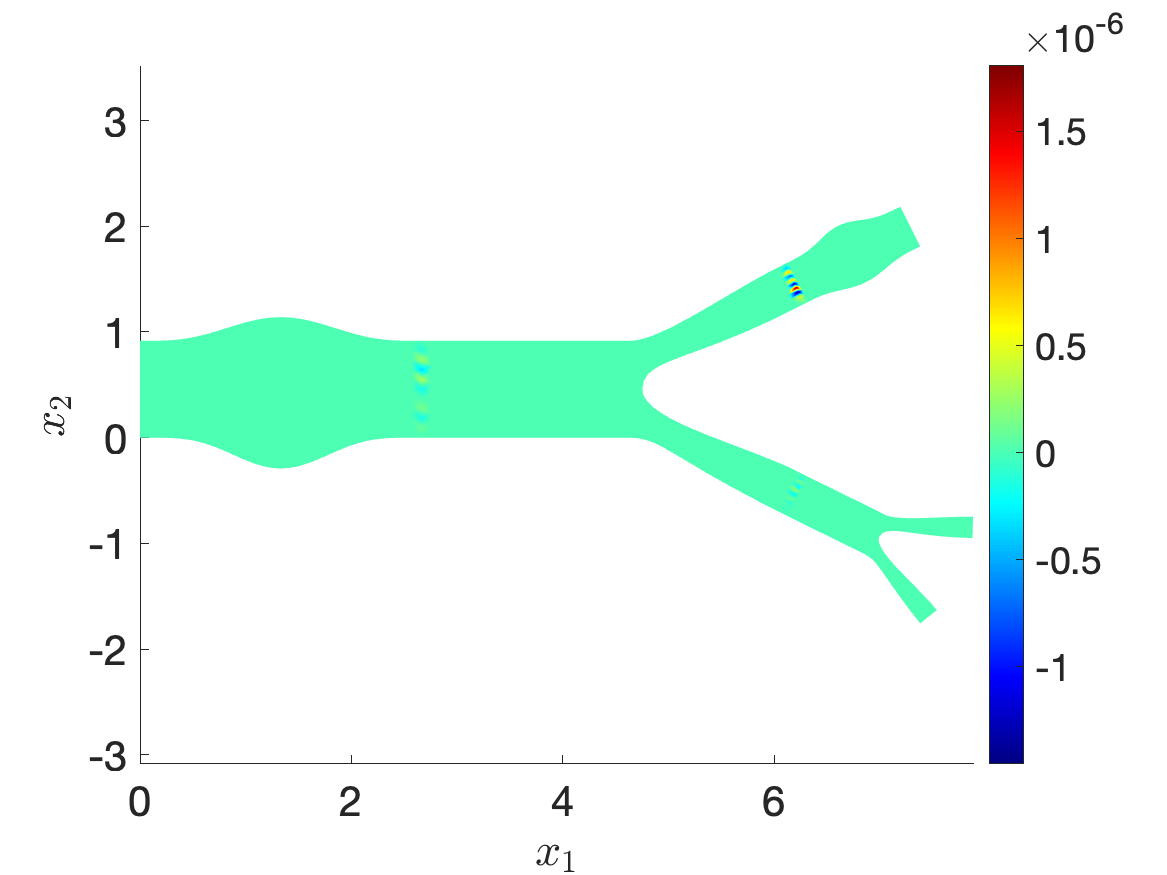}}
~~
\subfloat[$\llbracket p \rrbracket$]{\includegraphics[width=0.33\textwidth]{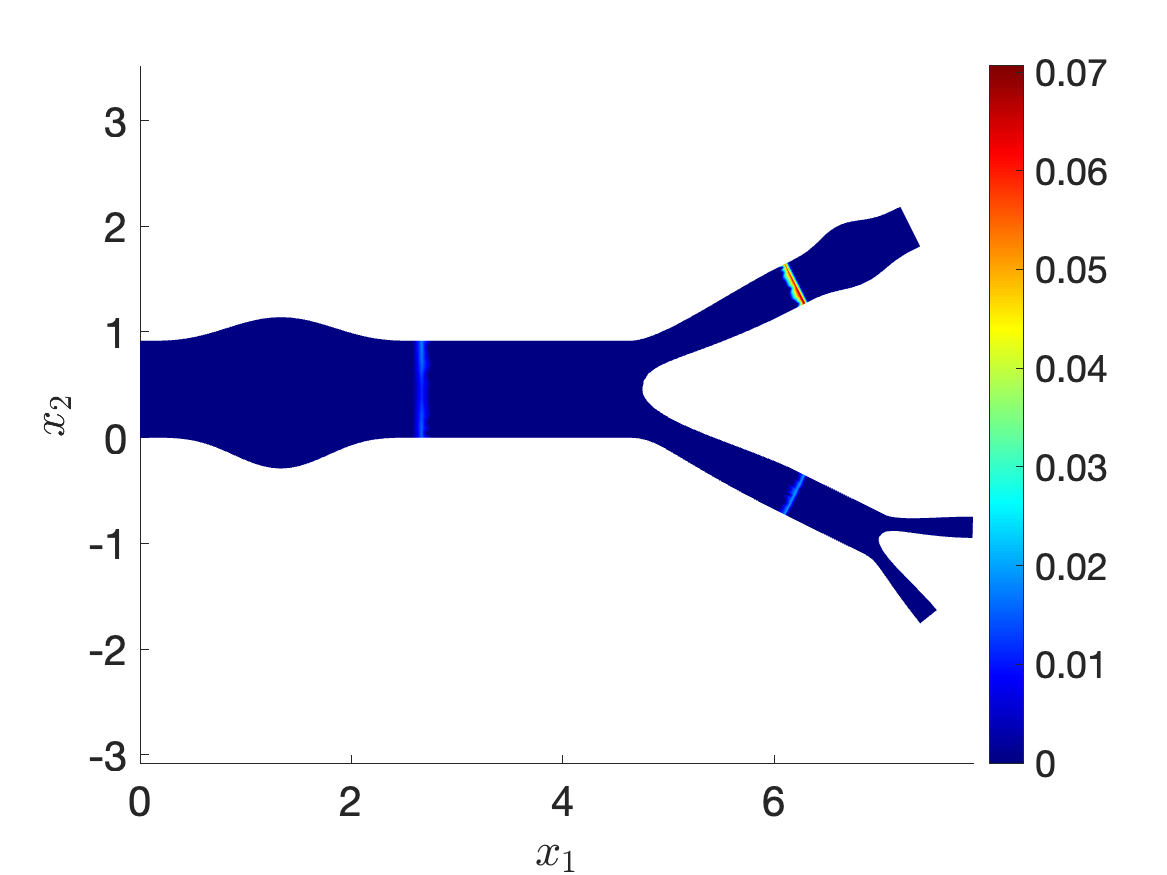}}
  
\caption{HF formulation.
(a)-(b)-(c)  interface jump of the solution to 
\eqref{eq:lsq_discr_final_a}.
(d)-(e)-(f)  interface jump of the solution to   
\eqref{eq:lsq_discr_standard}.}
\label{fig:hf_jump}
\end{figure}

Figure \ref{fig:g_H1norm} investigates the effect of the choice of the penalization norm for the control. In more detail, we compare the behavior of the horizontal control $g_x$ for the first port $\Gamma_1$
in Figure \ref{fig:instantiation_example}(b) for both $L^2$ regularization and $H^1$ regularization. We observe that the use of the $H^1$ regularization dramatically reduces the spurious oscillations in the proximity of the boundaries of the domain.
We further observe that, since
$\mathbf{n}={\rm vec}(1,\,0)$, 
 the control $g_x$ should  equal the viscous flux $- p + \nu \frac{\partial u_x}{\partial x}$; provided that $p \gg \big| \nu \frac{\partial u_x}{\partial x} \big|$, we hence find that $g_x \approx -p$.

\begin{figure}[H]
\includegraphics[width=9cm]{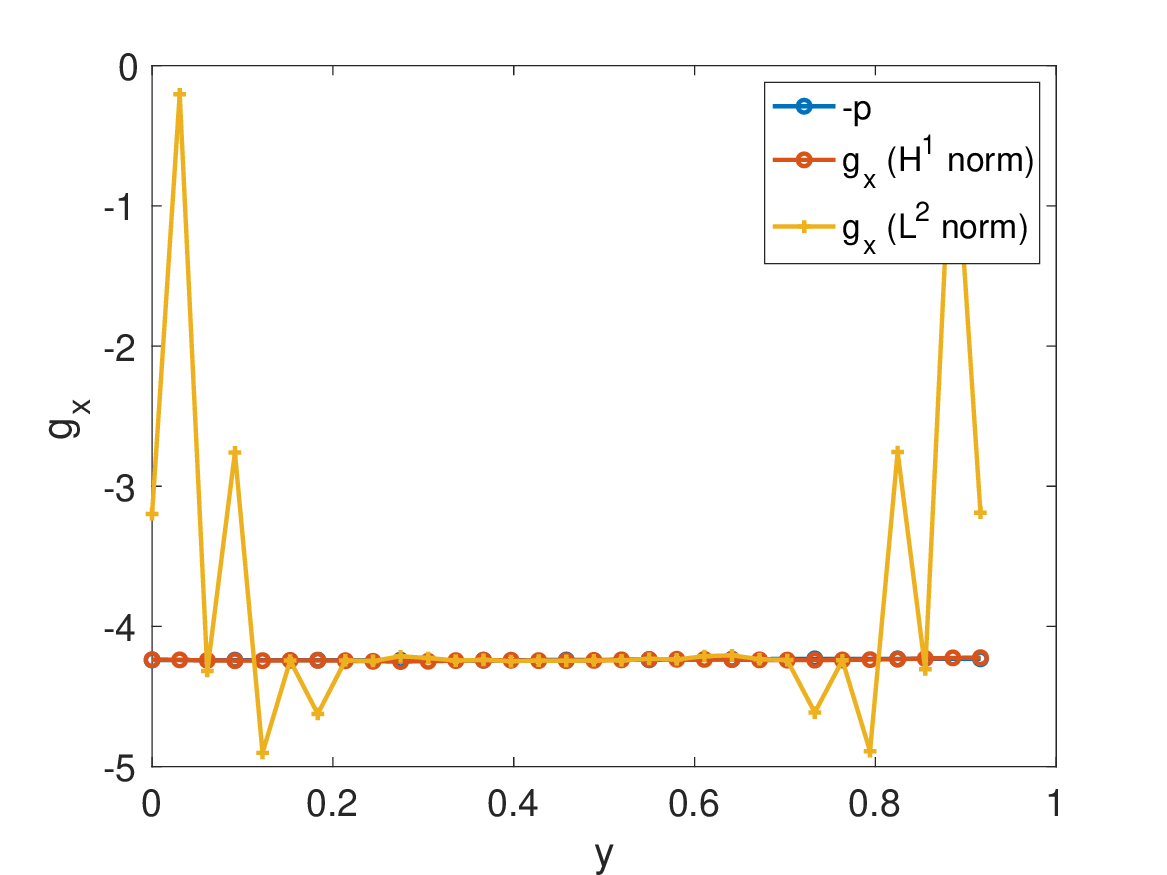}
\centering
\caption{Comparison of the $H^1$ norm and the $L^2$ norm for the regularization term.}
\label{fig:g_H1norm}
\end{figure}

\subsection{MOR procedure for networks of moderate size}
\label{sec:MOR_simple}
We now evaluate the performance of the ROM  introduced in section \ref{sec:rom} for the system 
configuration shown in Figure \ref{fig:instantiation_example}.
Since the total number of degrees of freedom is relatively modest, we can afford to solve the  multi-component generalization of  \eqref{eq:lsq_discr_final_a}  with HF local models and HF control. This enables a rigorous assessment of the results.
For the test cases presented in this section and in section \ref{sec:ROM_res_loc}, we choose the dimension of the original ROB (i.e., without ROB enrichment) for the state $n$ to be equal to the dimension of the ROB for the control $m$.

\subsubsection{Performance for a fixed geometry}
We freeze the value of the geometric parameters and we let   the Reynolds number vary in the domain 
$ \mathcal{P}= [50,150]$. 
We train the local ROMs based on $n_{\rm train}=60$ snapshots with equi-spaced parameters in $\mathcal{P}$, and we assess the performance of the resulting CB-ROM based on $n_{\rm test}=10$ randomly-selected out-of-sample parameters.
We measure performance of the ROMs in terms of the average out-of-sample relative prediction error for the four components:
\begin{equation}
\label{eq:Eavg}
E_{{\rm avg},\,i}:=\frac{1}{n_{\rm test}}\sum\limits_{\mu \in \mathcal{P}_{\rm test}}\frac{\Vert {\mathbf{w}}^{\rm hf}_i(\mu )-
\widehat{\mathbf{w}}_i(\mu ) \Vert_{\mathcal{X}_i}}{\Vert {\mathbf{w}}^{\rm hf}_i(\mu ) \Vert_{  \mathcal{X}_i  }},\quad i=1,\cdots,N_{\rm dd}=4,
\end{equation}
and the three ports:
\begin{equation}
\label{eq:Eavg_port}
E_{{\rm avg},\,j}^{\rm port}:=\frac{1}{n_{\rm test}}\sum\limits_{\mu \in \mathcal{P}_{\rm test}}\frac{\vertiii{ \mathbf{s}_j^{\rm hf}(\mu ) 
-\widehat{\mathbf{s}}_j(\mu ) 
}_{\Gamma_i}}
{\vertiii{\mathbf{s}_j^{\rm hf}}_{\Gamma_i}
}, \quad j=1,\cdots,N_{\rm f}=3.
\end{equation}

Figure \ref{fig:rom1_res} shows the prediction error $E_{{\rm avg},\,i}$ for the state $\mathbf{w}$ associated with three different local ROMs,
 Galerkin, Petrov-Galerkin, and minimum residual,
 for the four components of the network;
 Figure   \ref{fig:rom1_res_gh} shows the prediction error $E_{{\rm avg},\,j}^{\rm port}$ for the control on the three ports for the same choice of the local ROM.
 In this test, we do not perform  the enrichment of the state spaces described in section \ref{sec:enrichment_basis}.
 The Galerkin method exhibits stability issues, while both the minimal residual and the Petrov-Galerkin methods perform equally well in terms of accuracy with a relative error of the order of $10^{-5}$ for  $n=m=20$. 
The prediction  of the control variables is far less accurate: for $n=m=20$, the  relative error is $\mathcal{O}(10^{-1})$ for port $3$ and $\mathcal{O}(10^{-2})$ for the other two ports. 
Nevertheless, we envision that the  results can still be considered satisfactory, as illustrated by the  profiles of the control $\mathbf{g}$ at port $3$ shown in Figure \ref{fig:rom1_res_g_profile}, where $\xi$ is the local coordinate along the port $3$.
 
 \begin{figure}[H]
  \centering
  
  \subfloat[domain 1]{\includegraphics[width=0.4\textwidth]{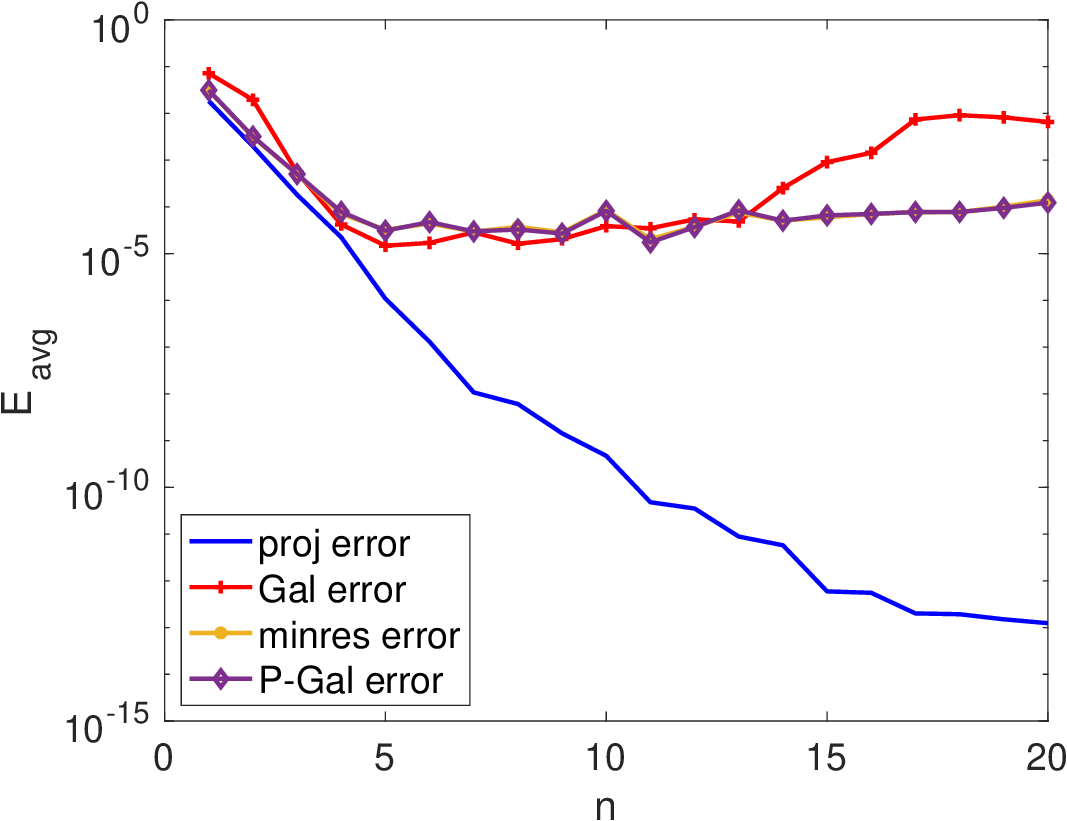}}
  \hfill
  \subfloat[domain 2]{\includegraphics[width=0.4\textwidth]{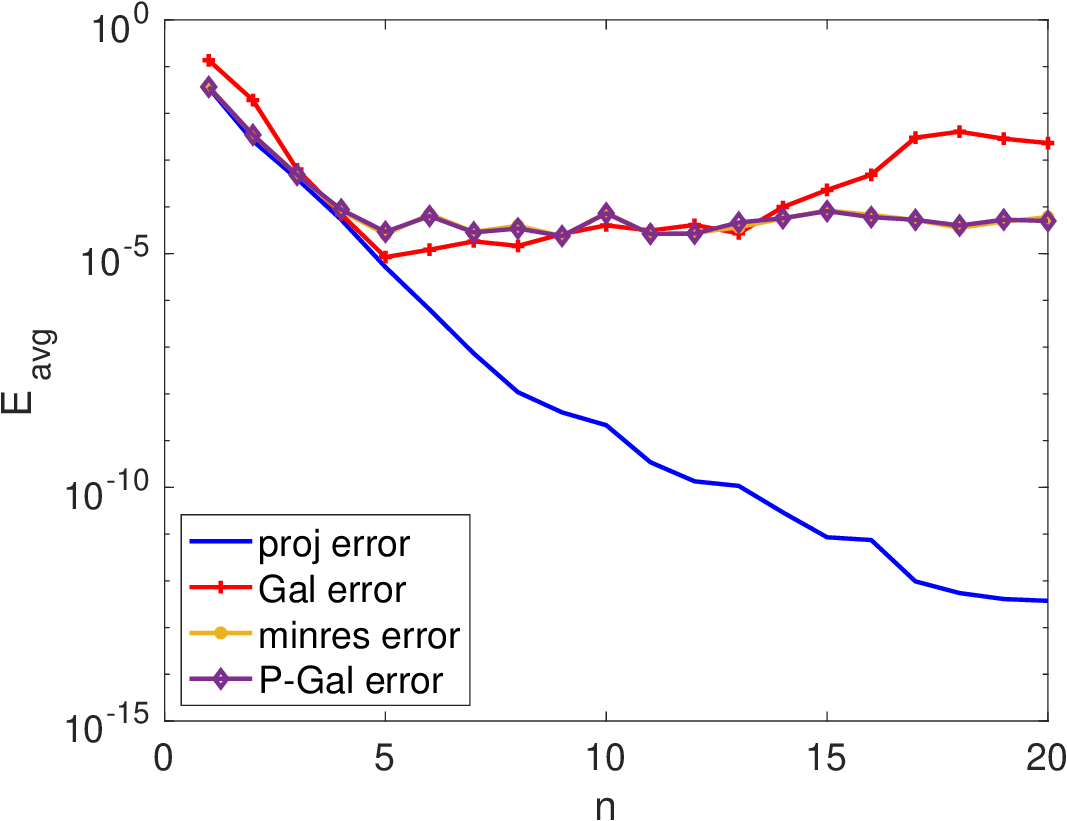}}

  \subfloat[domain 3]{\includegraphics[width=0.4\textwidth]{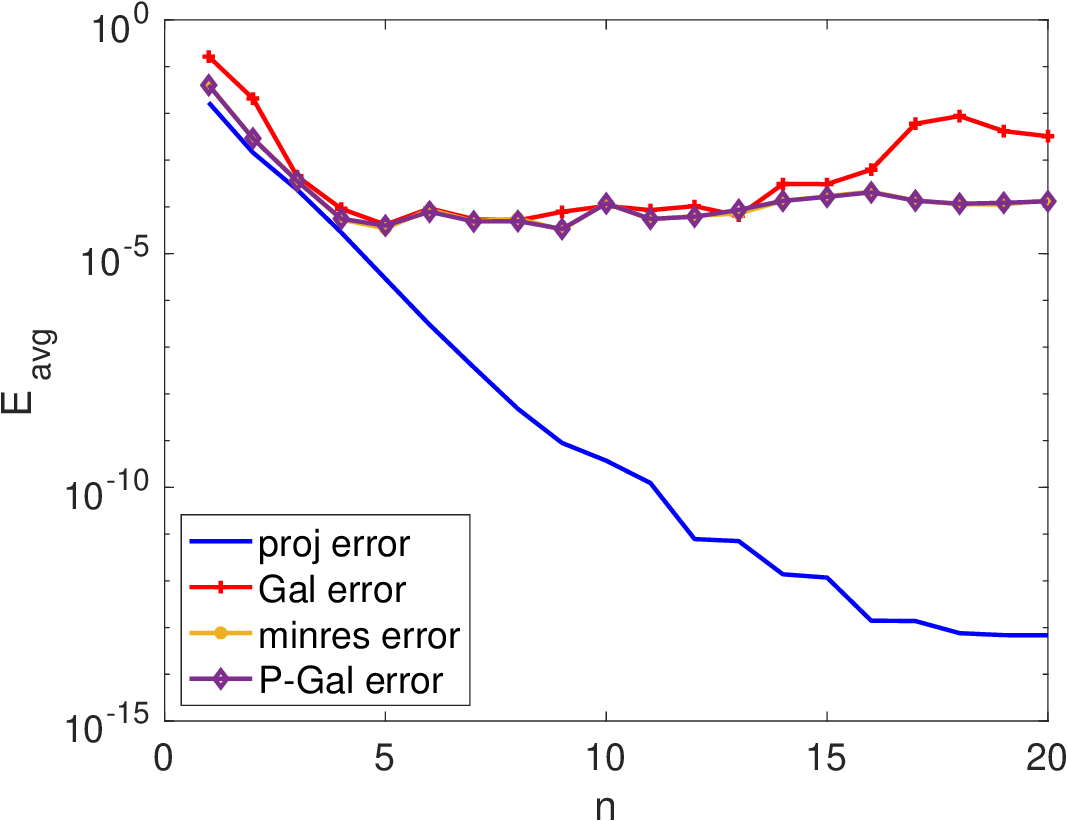}}
  \hfill
  \subfloat[domain 4]{\includegraphics[width=0.4\textwidth]{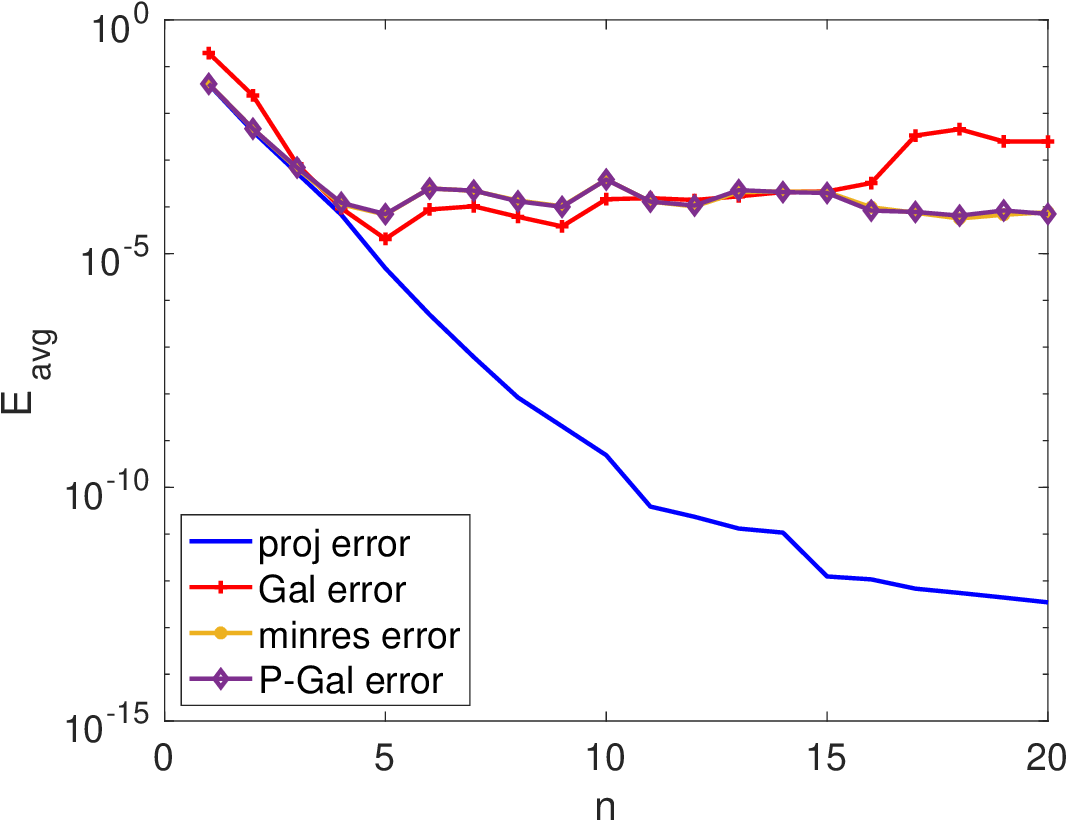}}
  
\caption{performance for a fixed geometry. Behavior of the error \eqref{eq:Eavg} for the subdomains (no enrichment).}
\label{fig:rom1_res}
\end{figure}

\begin{figure}[H]
  \centering
  
  \subfloat[port 1]{\includegraphics[width=0.33\textwidth]{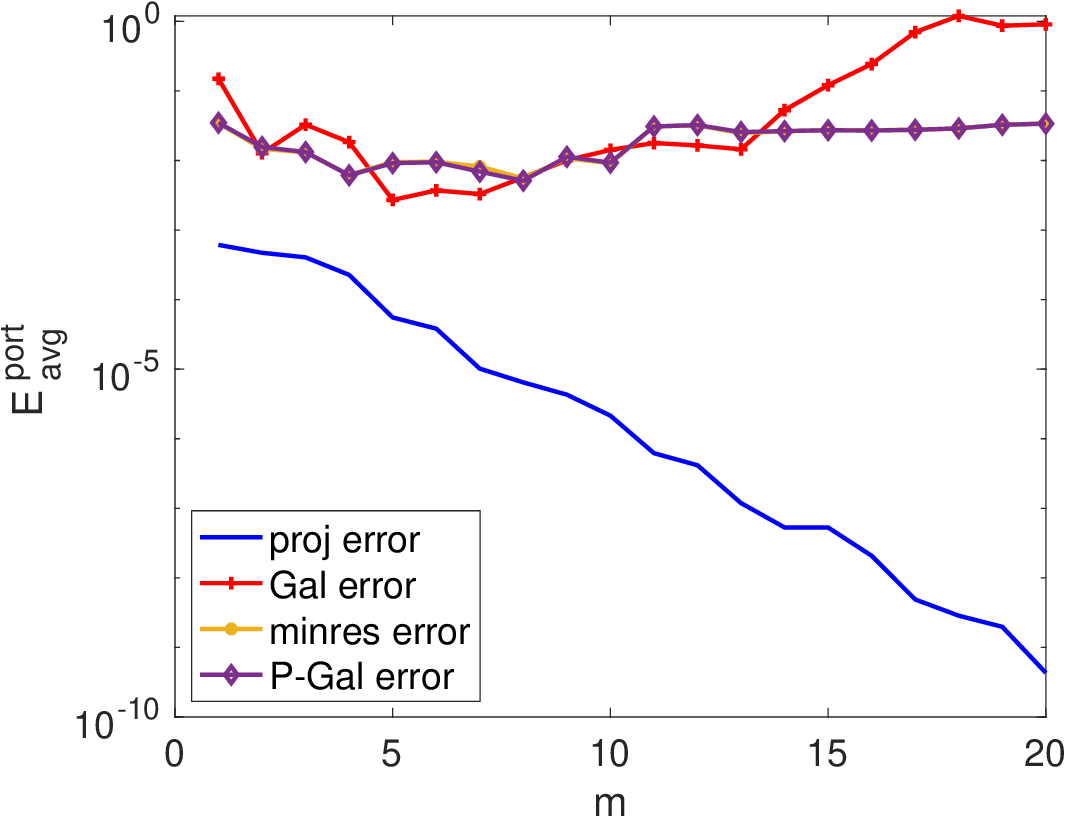}}
  ~~
  \subfloat[port 2]{\includegraphics[width=0.33\textwidth]{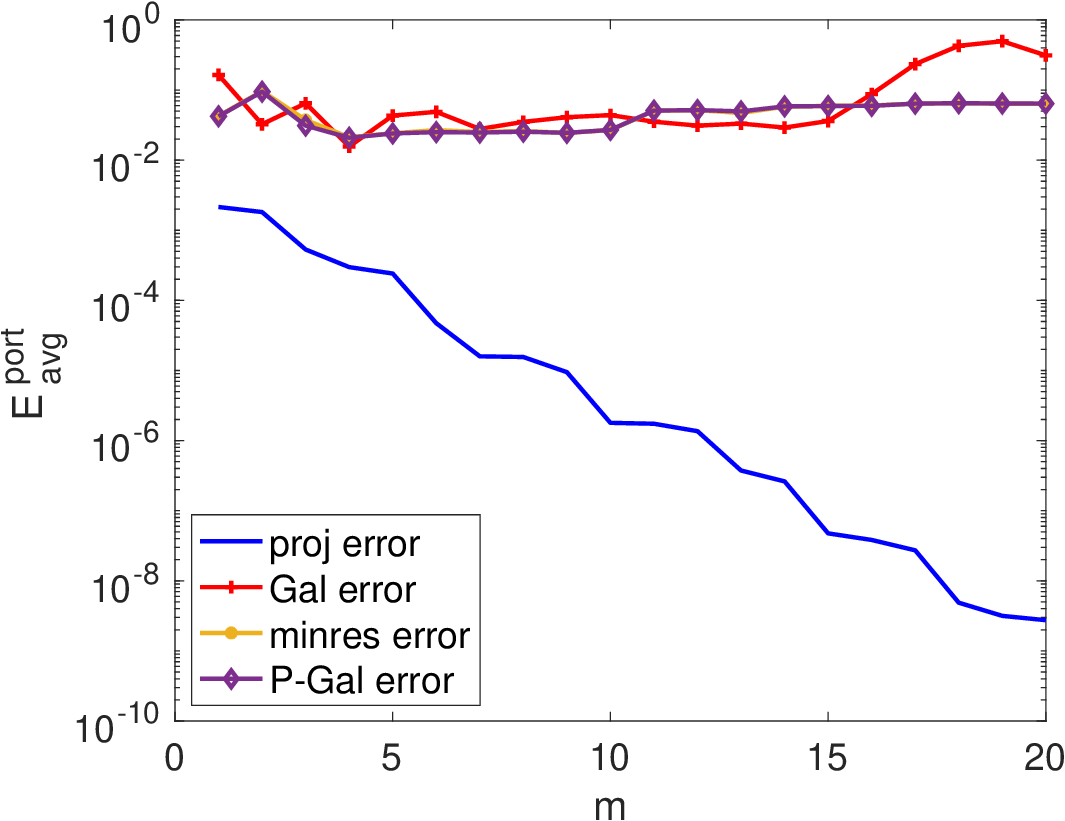}}
~~
  \subfloat[port 3]{\includegraphics[width=0.33\textwidth]{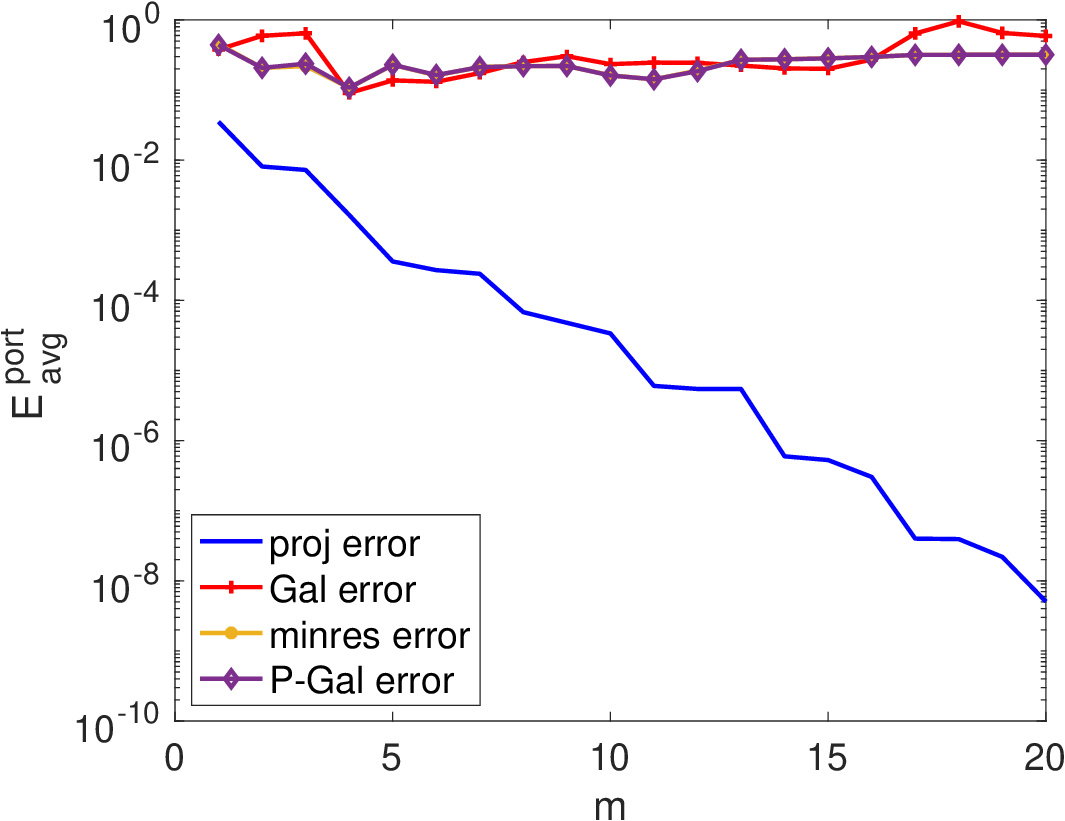}}
  
 \caption{performance for a fixed geometry.
  Behavior of the error \eqref{eq:Eavg_port} for  the  three ports (no enrichment).}
  \label{fig:rom1_res_gh}
\end{figure}

\begin{figure}[H]
  \centering
  
  \subfloat[$g_x$]{\includegraphics[width=0.4\textwidth]{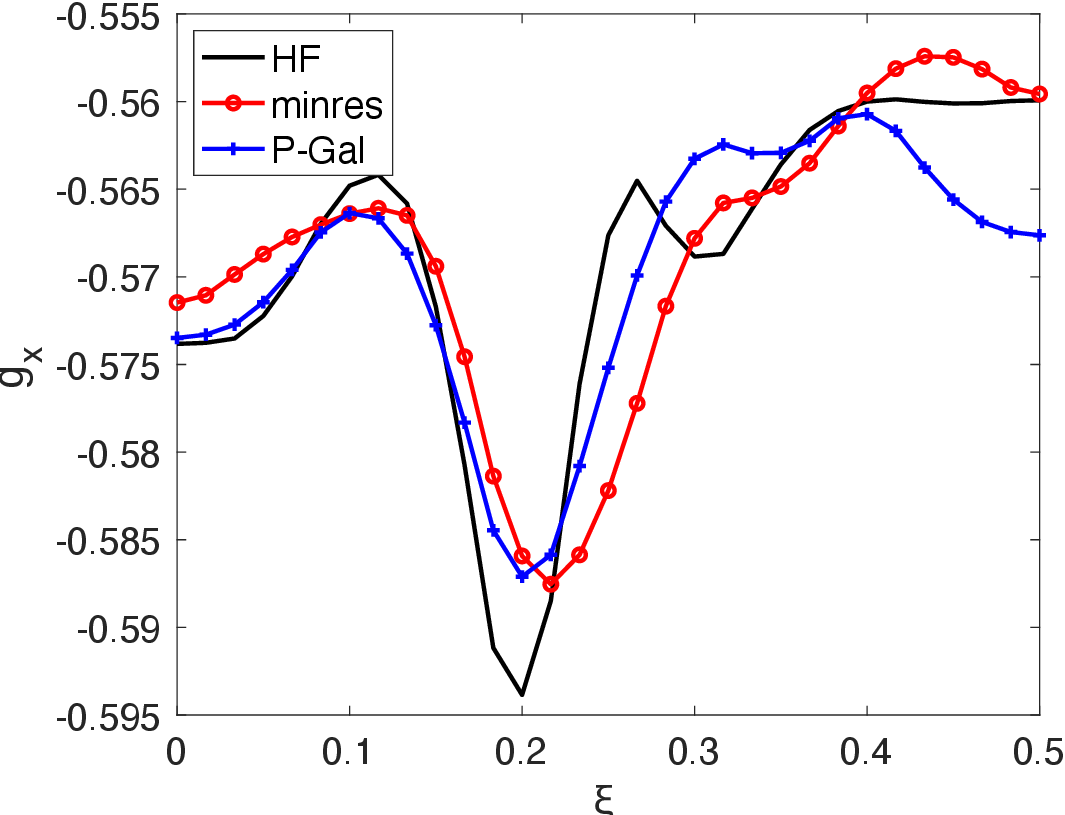}}
  \hfill
  \subfloat[$g_y$]{\includegraphics[width=0.4\textwidth]{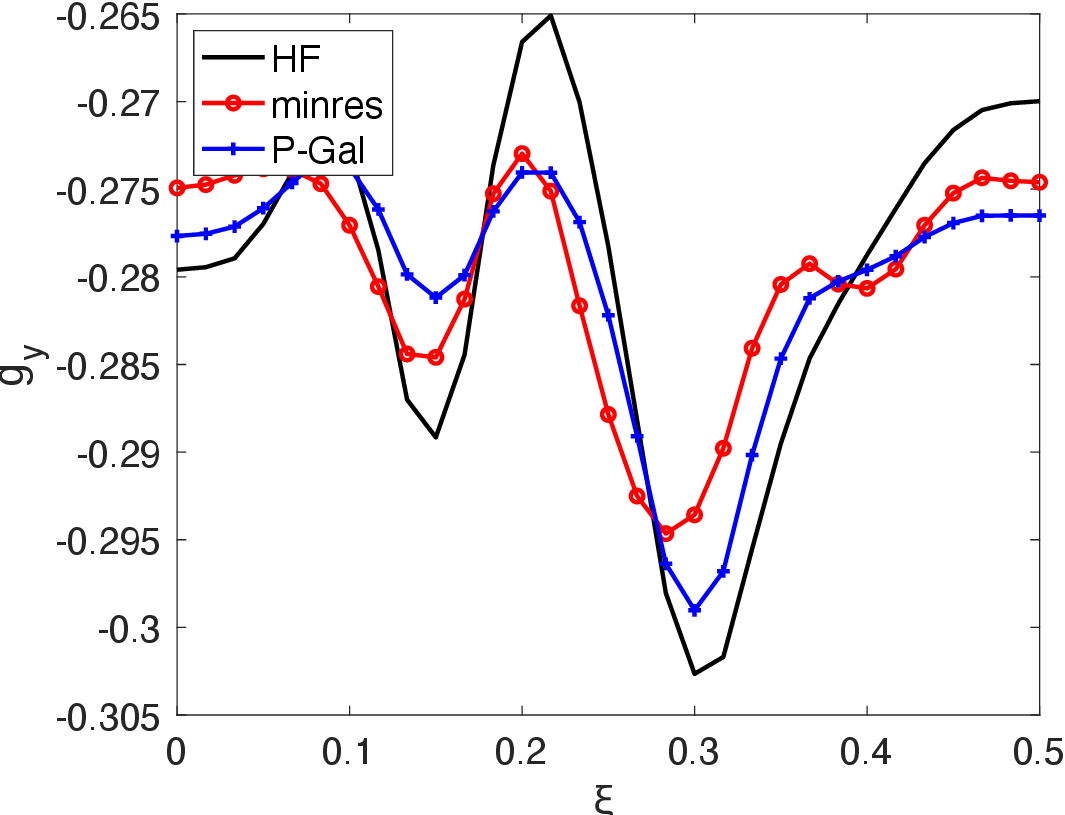}}
    
  \caption{performance for a fixed geometry. 
  Profile of the two components of the control $\mathbf{g}$ for one representative parameter value along the port $3$ (no enrichment).}
  \label{fig:rom1_res_g_profile}
\end{figure}

In Figure \ref{fig:rom1_compr_Fgh}, we illustrate the performance of the ROM when we employ the enrichment strategy discussed in section \ref{sec:enrichment_basis}.
To facilitate comparison, we include dashed lines representing the results obtained without employing ROB enrichment, which corresponds to the data presented in Figure \ref{fig:rom1_res_gh}.
Here, the number of additional modes $n'$ (cf. section \ref{sec:enrichment_basis}) is chosen to be equal to the dimension of the ROB of the control,  $m$. 
The ROB enrichment strategy significantly reduces the prediction error for the control; the state prediction 
achieved with ROB enrichment
is comparable to the case without ROB enrichment and is not provided below.
We further remark that the enrichment 
does not contribute to increase the number of SQP iterations: to provide a concrete reference, for $m=10$,  SQP converges in six iterations  for both cases.

\begin{figure}[H]
  \centering
  
  \subfloat[port 1]{\includegraphics[width=0.33\textwidth]{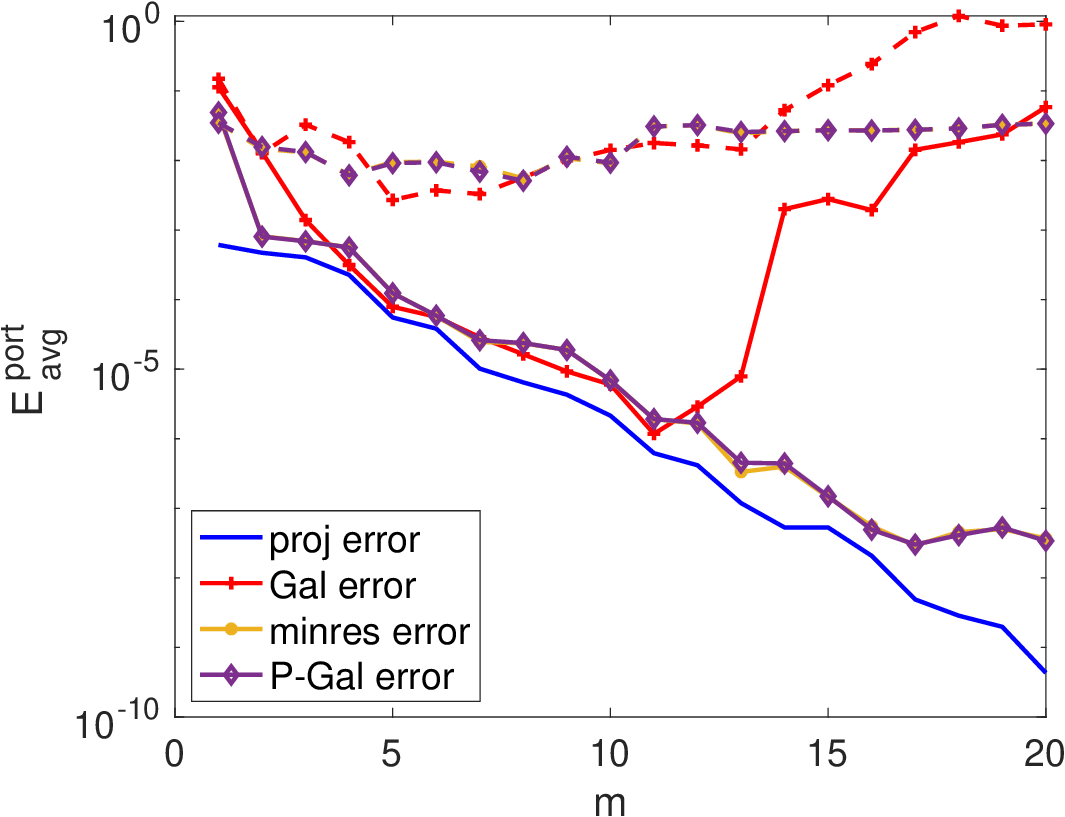}}
~~
  \subfloat[port 2]{\includegraphics[width=0.33\textwidth]{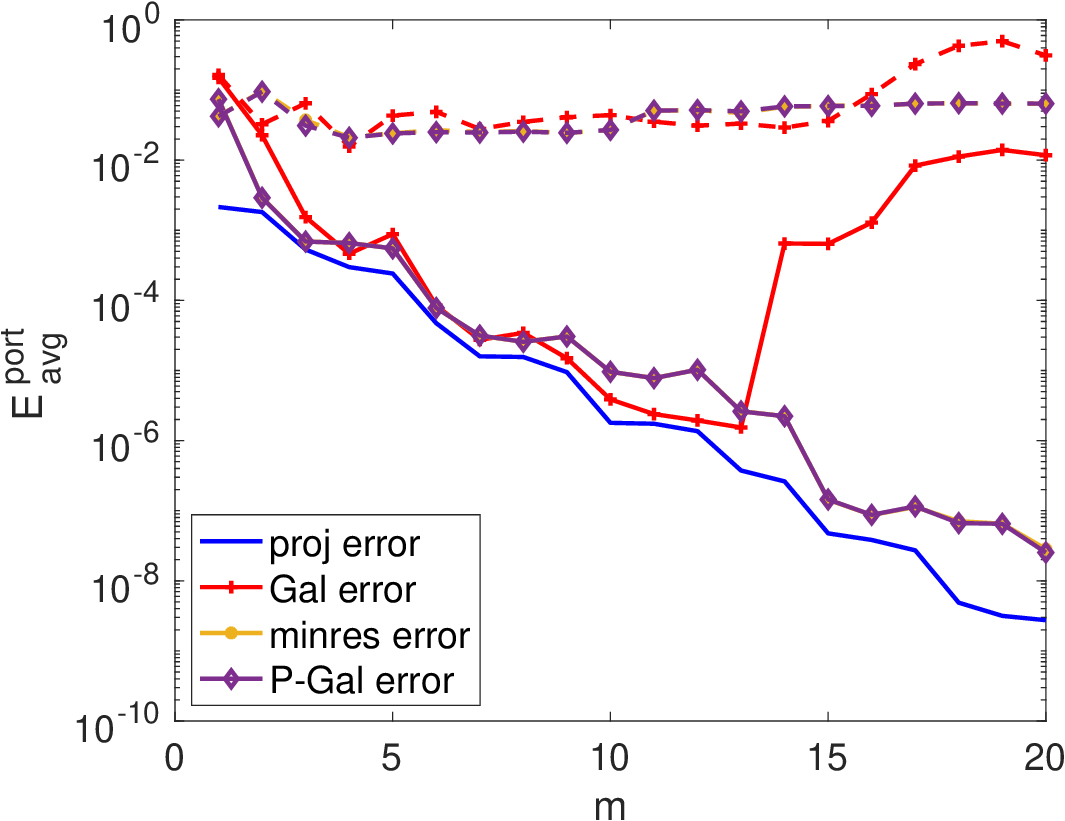}}
~~
  \subfloat[port 3]{\includegraphics[width=0.33\textwidth]{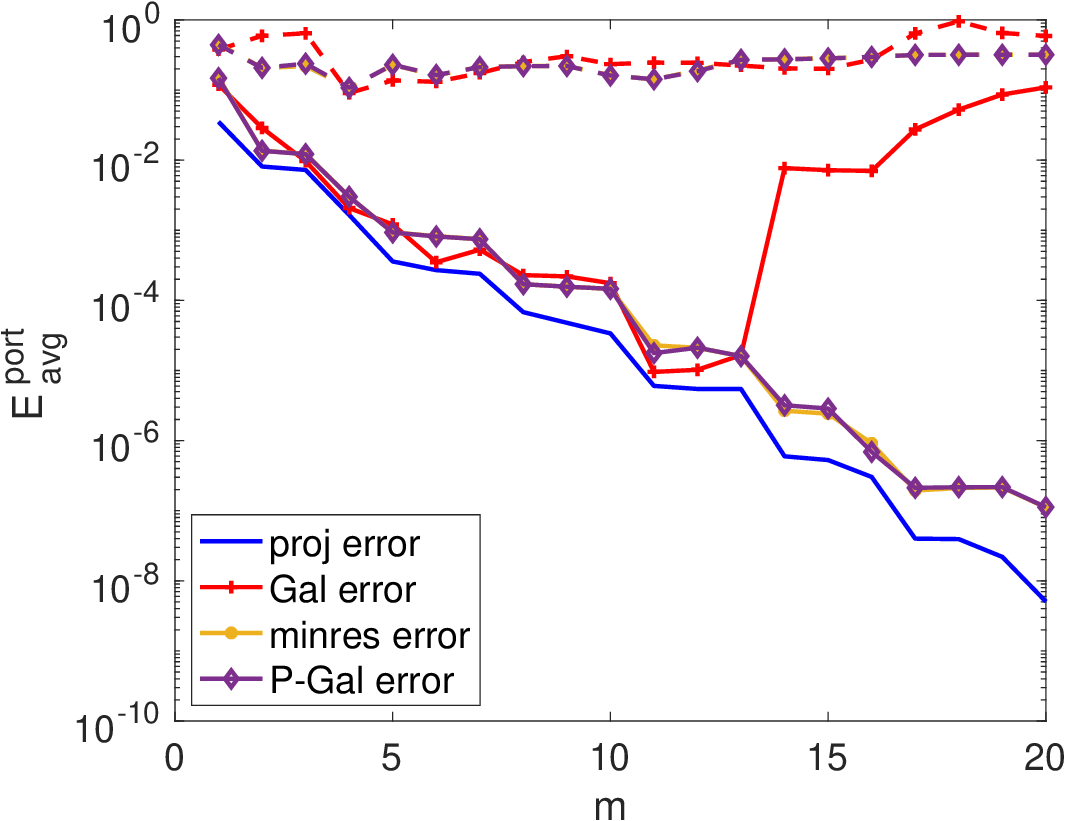}}
  
\caption{performance for a fixed geometry. Behavior of the error \eqref{eq:Eavg_port} for the  three ports (with  enrichment).}
 \label{fig:rom1_compr_Fgh}
\end{figure}

\subsubsection{Performance for a parametric  geometry}
\label{sec:ROM_results2}
We incorporate the geometric parameters described in section \ref{sec:model_pb}, along with the Reynolds number. 
For each junction component in the network, 
we set $\alpha \in  [\frac{\pi}{8},\,\frac{\pi}{6}]$; for each channel component, we set  $h_c\in [0.1,\,0.3]$; finally, we consider
$\text{Re}\in [50,150]$ with $\text{Re}_{\rm ref}=100$.
We train the ROMs based on $n_{\rm train}=120$ snapshots and assess performance based on $n_{\rm test}=10$ randomly-selected out-of-sample parameters.
As for  the previous test, we analyze the prediction error for both $\mathbf{w}$ and $\mathbf{s}$ associated with the different ROMs. Figure \ref{fig:rom2_res} illustrates the prediction error $E_{{\rm avg},\,i}$ for the four components, while Figure \ref{fig:rom2_res_gh} shows the prediction error $E_{{\rm avg},\,k}^{\rm port}$ for the three ports. Interestingly,   the Galerkin method is   as effective as   the minimal residual and the Petrov-Galerkin methods. All three ROMs yield a  state prediction relative error of approximately $\mathcal{O}(10^{-4})$ for  $n=20$; on the other hand,   the  control prediction error
is roughly $\mathcal{O}(10^{-1})$ for the third port, and $\mathcal{O}(10^{-2})$ for the other two ports, for  $n=20$.
In Figure \ref{fig:rom2_compr_Fgh}, we perform a comparison of ROM errors associated to the three ports, with and without the ROB enrichment strategy outlined in section \ref{sec:enrichment_basis}. The dashed lines represent the results obtained in the absence of ROB enrichment, which correspond  to the data shown in Figure \ref{fig:rom2_res_gh}.  
As for the previous test, the ROB enrichment strategy significantly improves the   accuracy of the control prediction. 
Here, the number of additional modes $n'$ (cf. section \ref{sec:enrichment_basis}) is chosen to be twice as large as the dimension of the ROB for the ports $m$.

\begin{figure}[H]
  \centering
  
  \subfloat[domain 1]{\includegraphics[width=0.4\textwidth]{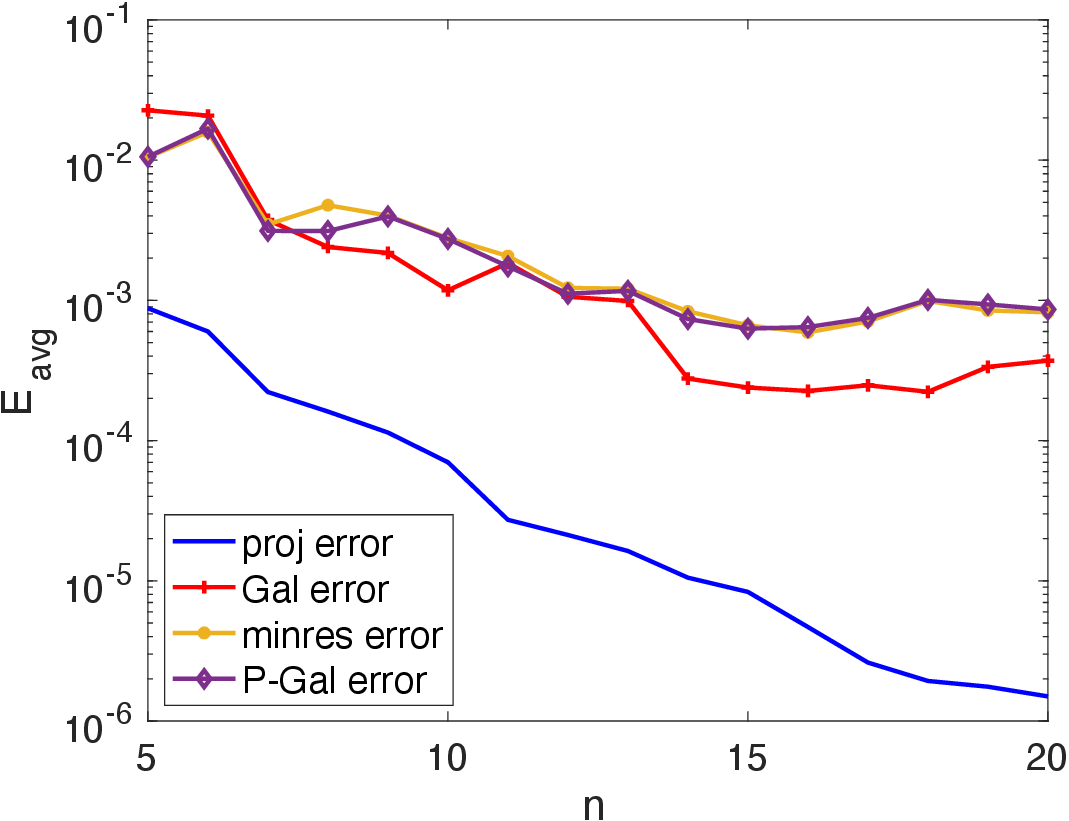}}
  \hfill
  \subfloat[domain 2]{\includegraphics[width=0.4\textwidth]{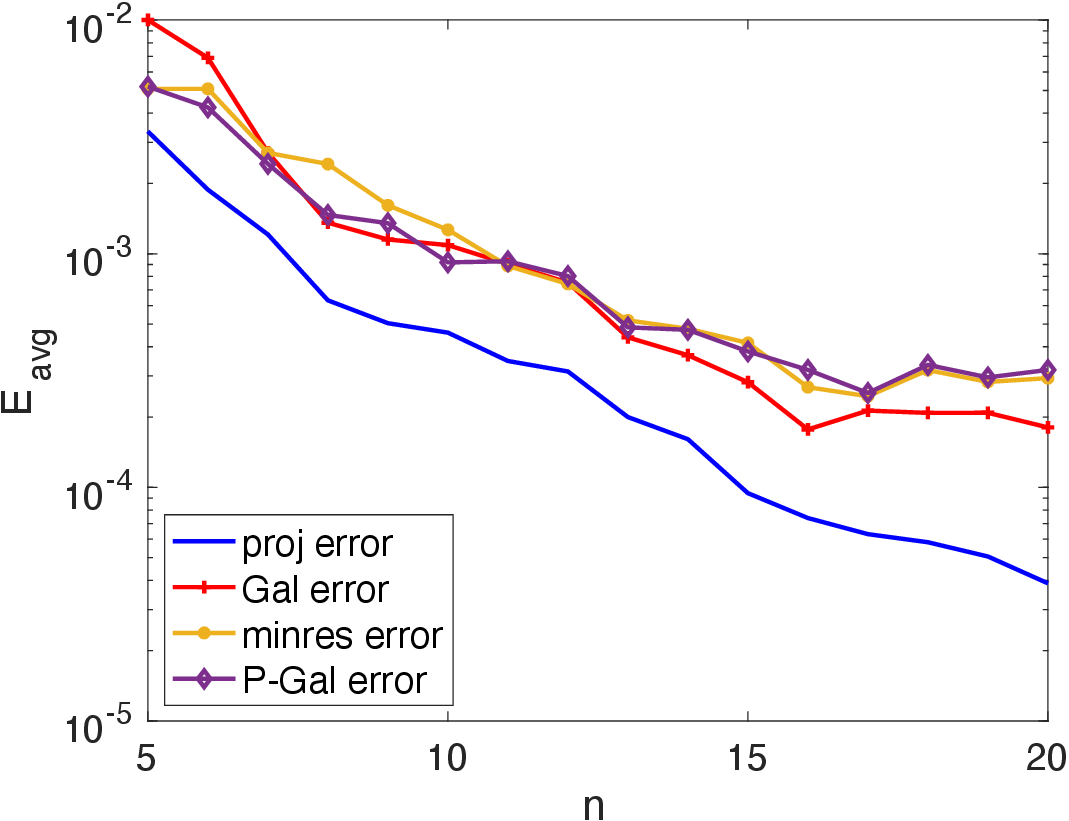}}

  \subfloat[domain 3]{\includegraphics[width=0.4\textwidth]{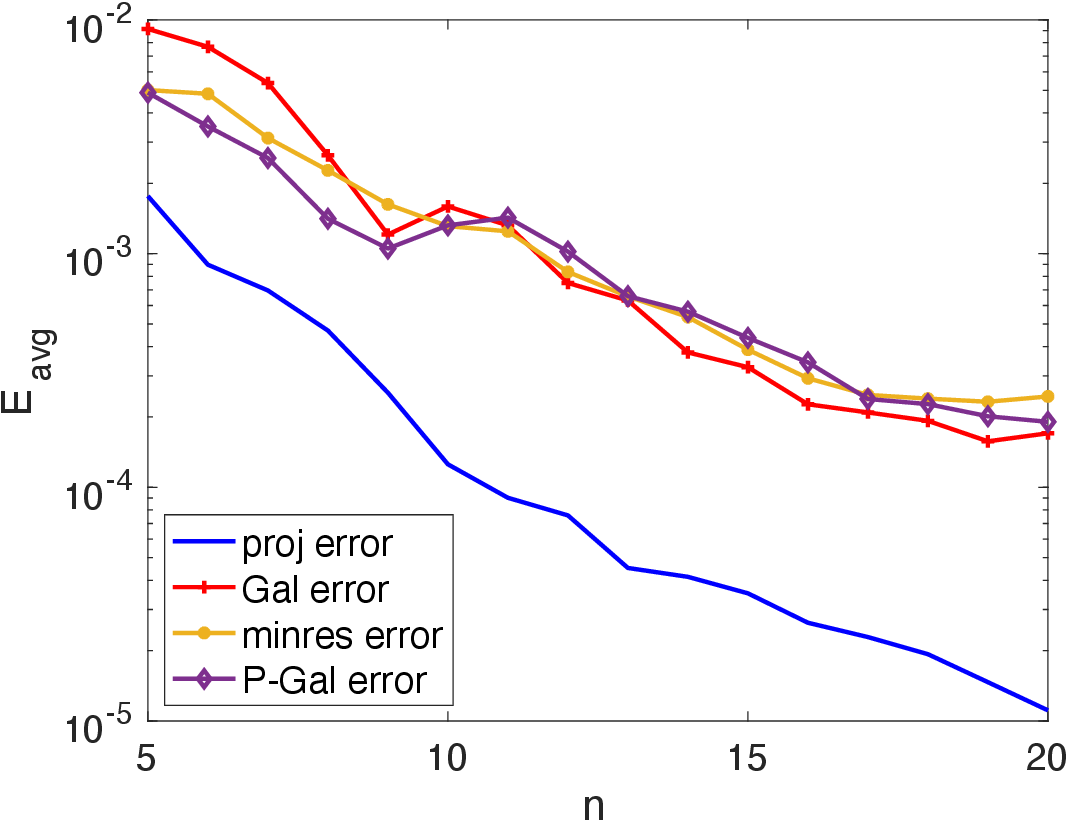}}
  \hfill
  \subfloat[domain 4]{\includegraphics[width=0.4\textwidth]{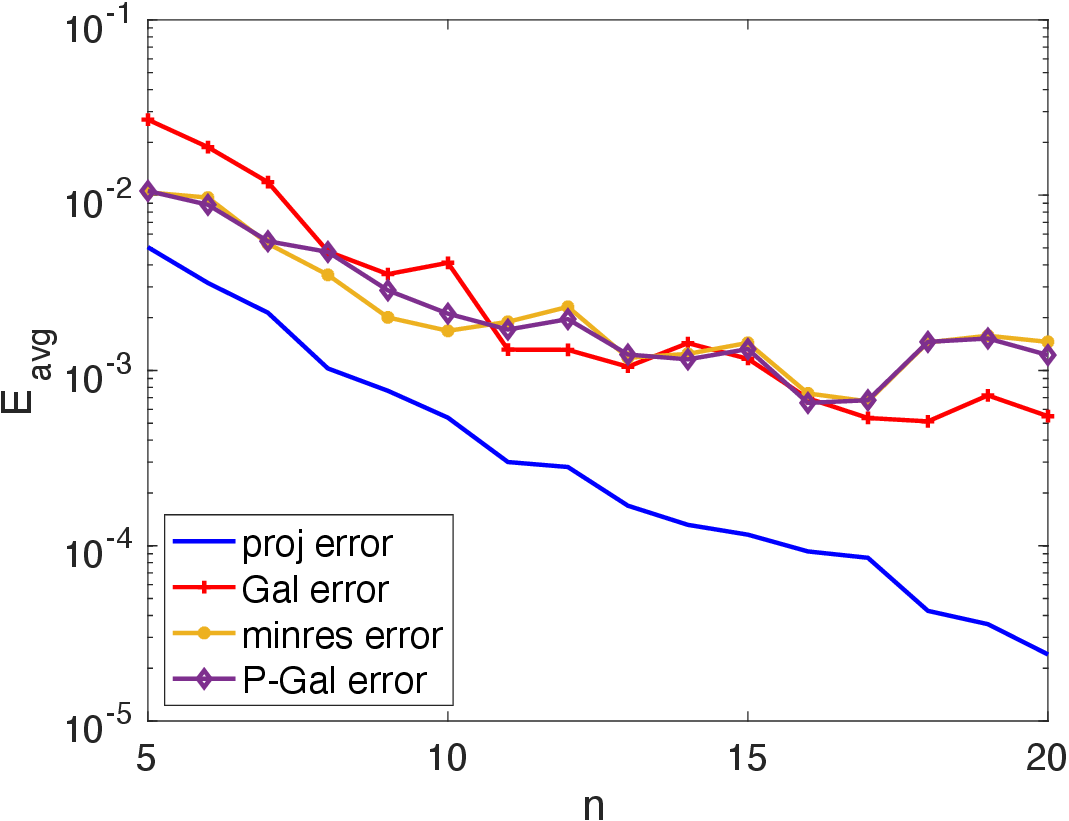}}
  
\caption{performance for a parametric geometry. 
Behavior of the error \eqref{eq:Eavg} for the subdomains (no enrichment).}
  \label{fig:rom2_res}
\end{figure}

\begin{figure}[H]
  \centering
  
  \subfloat[port 1]{\includegraphics[width=0.33\textwidth]{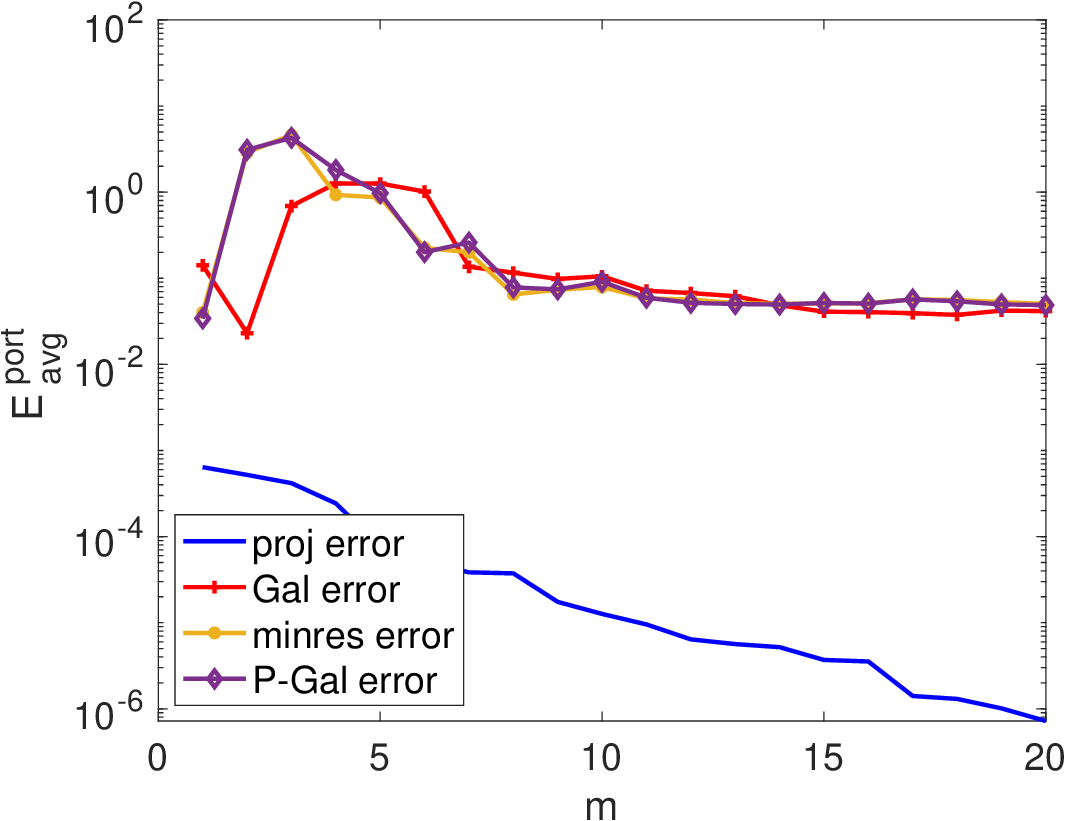}}
~~
  \subfloat[port 2]{\includegraphics[width=0.33\textwidth]{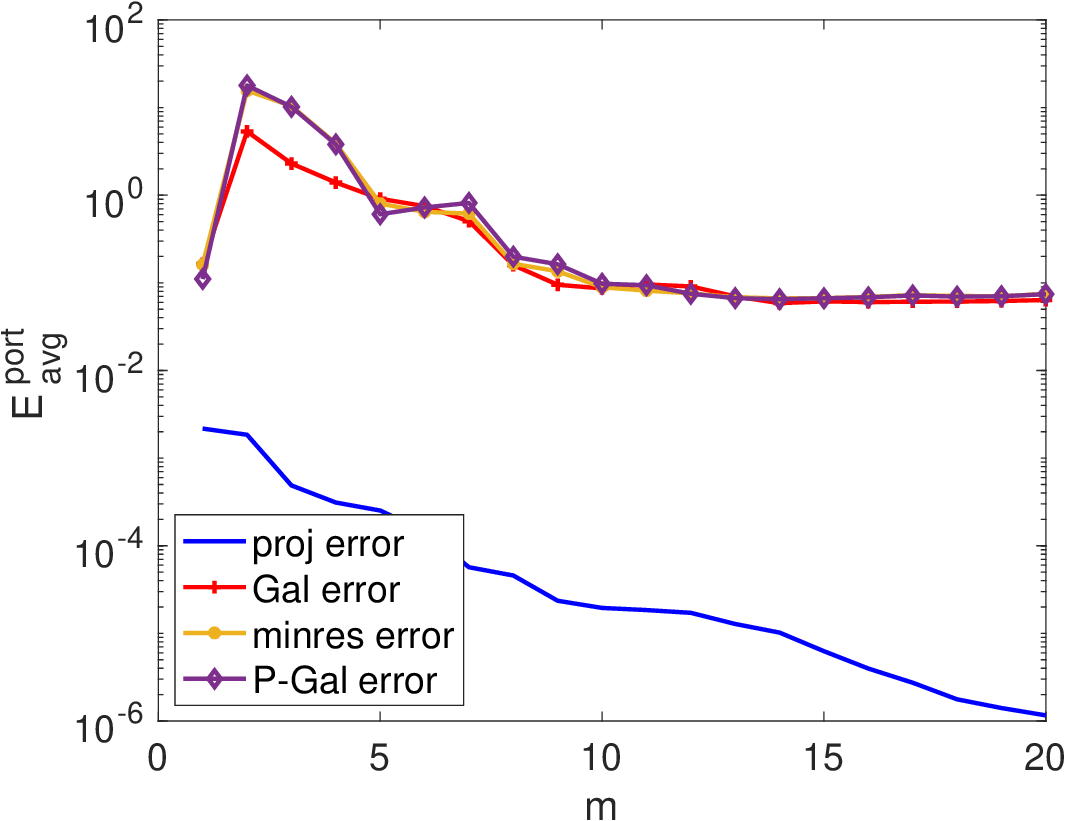}}
~~
  \subfloat[port 3]{\includegraphics[width=0.33\textwidth]{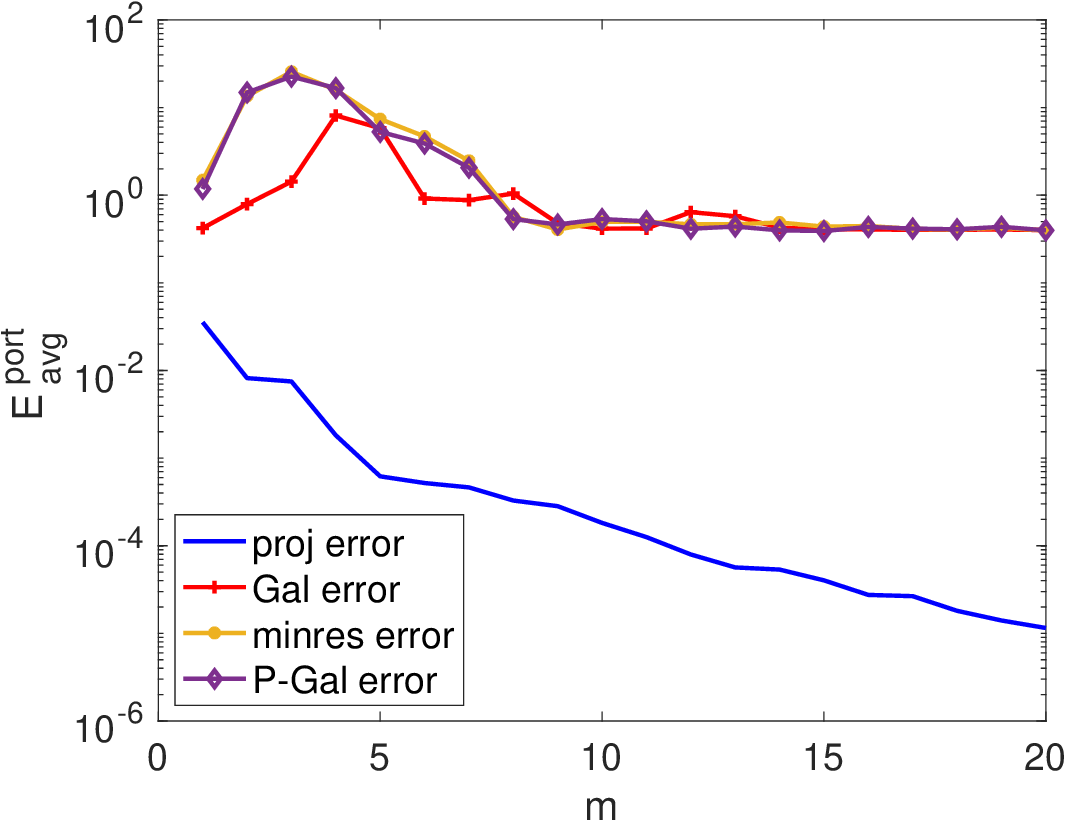}}

  \caption{performance for a parametric geometry.  
Behavior of the error \eqref{eq:Eavg_port} for the three ports (no enrichment).}
  \label{fig:rom2_res_gh}
\end{figure}

\begin{figure}[H]
  \centering
  
  \subfloat[port 1]{\includegraphics[width=0.33\textwidth]{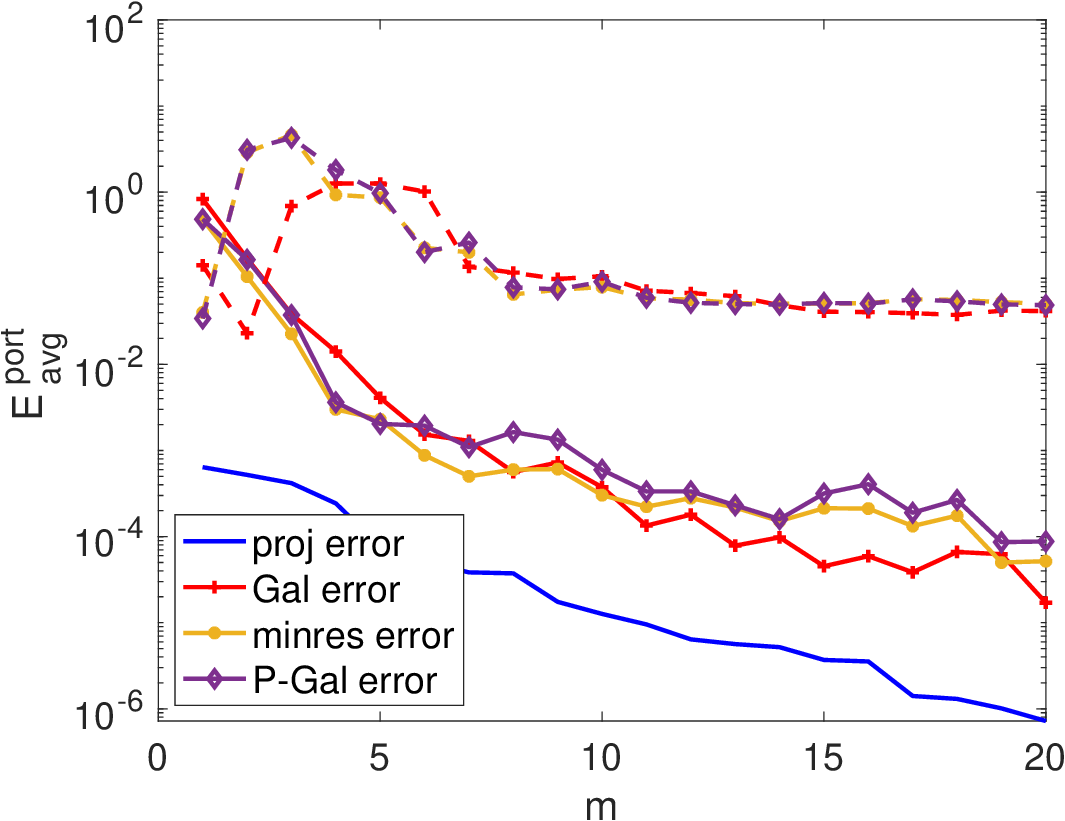}}
~~
  \subfloat[port 2]{\includegraphics[width=0.33\textwidth]{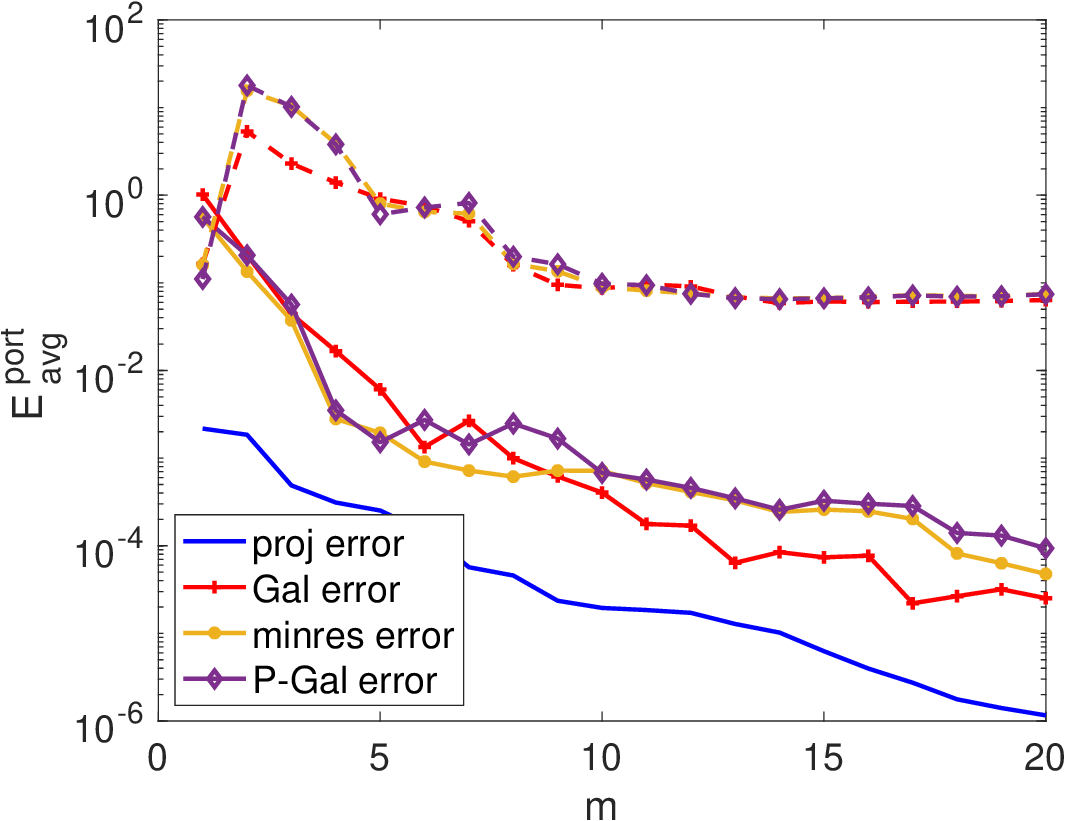}}
~~
  \subfloat[port 3]{\includegraphics[width=0.33\textwidth]{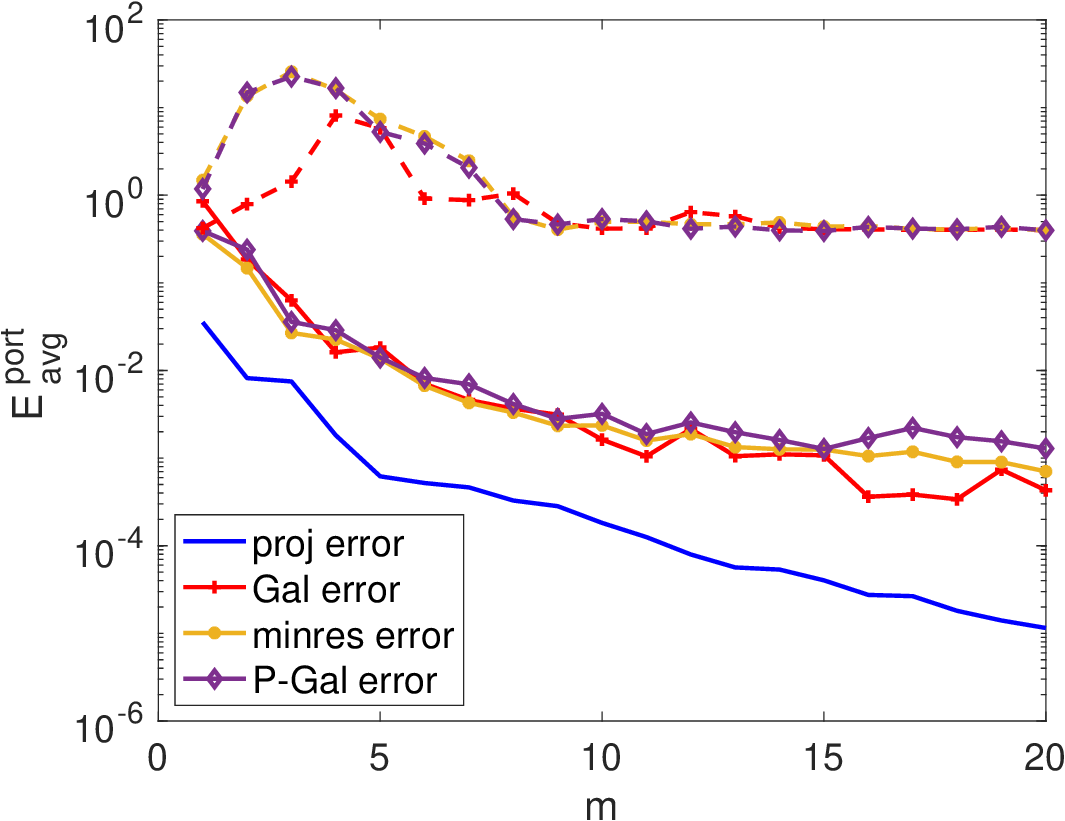}}
  
\caption{performance for a parametric geometry.  
Behavior of the error \eqref{eq:Eavg_port} for the three ports (with enrichment).}
  \label{fig:rom2_compr_Fgh}
\end{figure}

\subsection{Localized training and adaptive enrichment}
\label{sec:ROM_res_loc}
In the previous   test cases, a distinct reduced space is employed for each instantiated component: the same configuration is used for both training and  assessment.
This approach is computationally demanding when dealing with systems that comprise a large number of components; 
it is also unfeasible in the presence of topology changes. 
To address this issue, we apply  the localized training and adaptive enrichment algorithms developed in section \ref{sec:loc_train}.

\subsubsection{Application to networks with four components}
\label{sec:ROM_results_loc_small}
We apply the localized training strategy of sections 
\ref{sec:pairwise_control} and 
\ref{sec:localized_state}, for the same  test set of section \ref{sec:ROM_results2}. 
In order to build the reduced space for the control,
we consider
$60$ randomly selected boundary conditions for each connection described in section \ref{sec:pairwise_control};
on the other hand, 
we generate the reduced space for the state using 
$20$ randomly-sampled networks with four components and the reduced space for the control.

Figure \ref{fig:rom3_res} presents the prediction error $E_{{\rm avg},\,i}$ for the four components, while Figure \ref{fig:rom3_res_gh} shows the prediction error $E_{{\rm avg},\,k}^{\rm port}$ for the three ports; we do not rely on the enrichment of the state space (cf. section \ref{sec:enrichment_basis}).
The results are comparable to those obtained in section \ref{sec:ROM_results2} with slight deterioration in accuracy.
Figure \ref{fig:rom3_compr_Fgh} displays
the ROM errors for the three ports using ROB enrichment ($n'=2m$), as represented by the solid line. The results exhibit significant improvement when compared to those obtained without the use of the state space enrichment, as illustrated  by the dashed lines, which correspond to the data shown in Figure \ref{fig:rom3_res_gh}.

\begin{figure}[H]
\centering
  
  \subfloat[domain 1]{\includegraphics[width=0.4\textwidth]{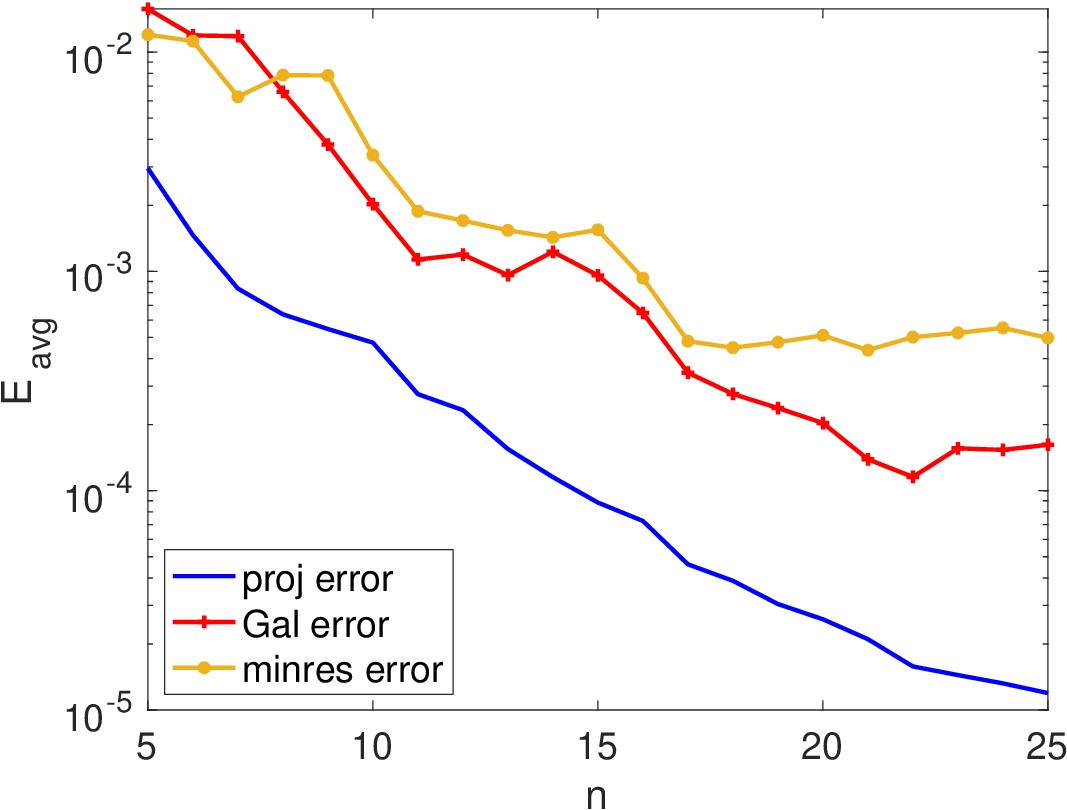}}
  \hfill
  \subfloat[domain 2]{\includegraphics[width=0.4\textwidth]{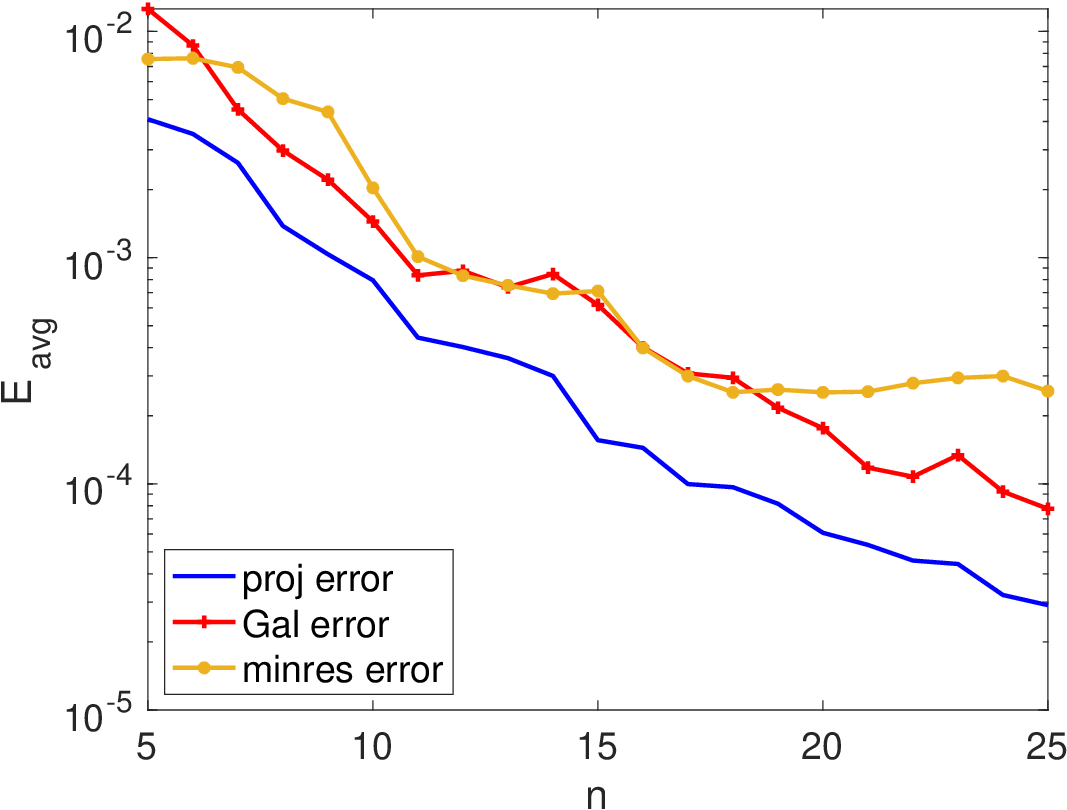}}

  \subfloat[domain 3]{\includegraphics[width=0.4\textwidth]{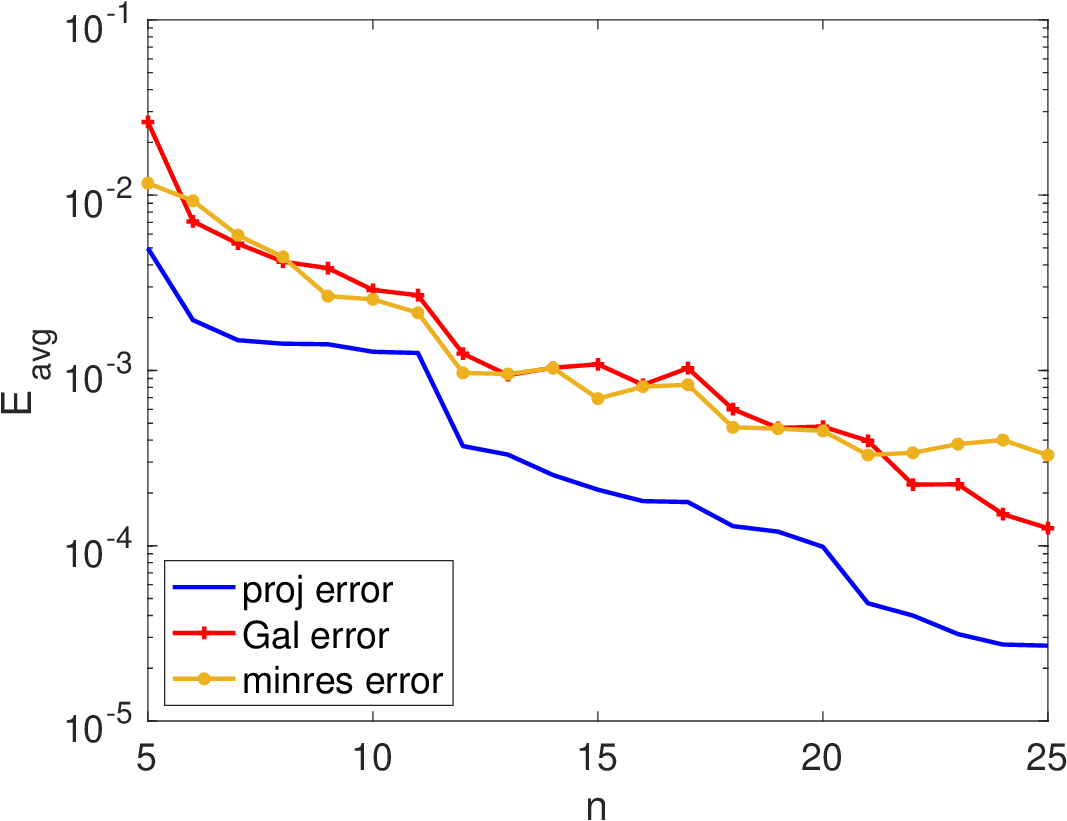}}
  \hfill
  \subfloat[domain 4]{\includegraphics[width=0.4\textwidth]{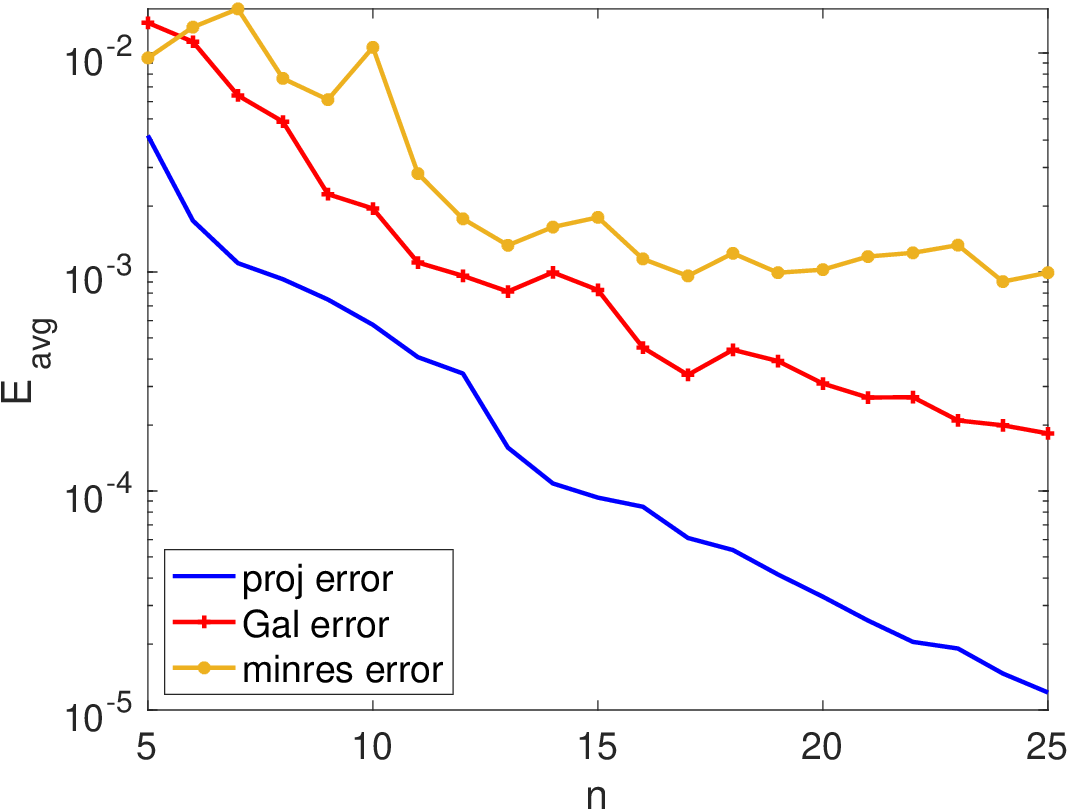}}
  
\caption{localized training for networks with four components. State prediction error for the four sub-domains.}
  \label{fig:rom3_res}
\end{figure}

\begin{figure}[H]
  \centering
  
  \subfloat[port 1]{\includegraphics[width=0.33\textwidth]{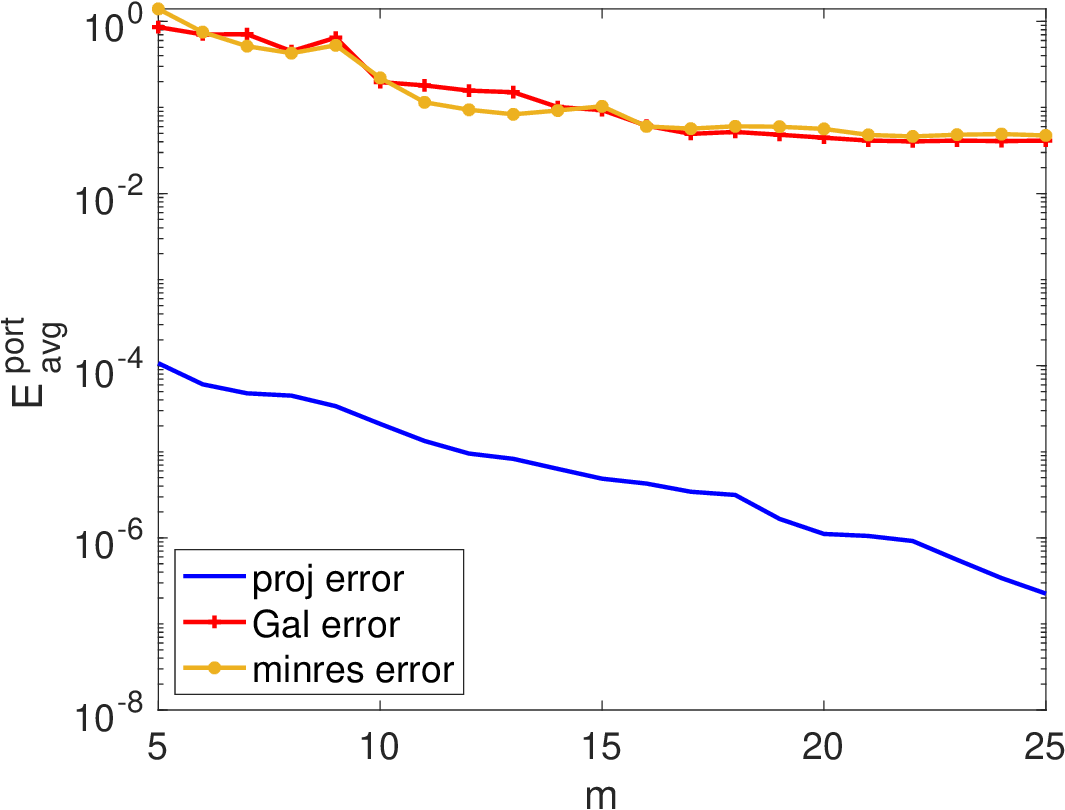}}
~~
  \subfloat[port 2]{\includegraphics[width=0.33\textwidth]{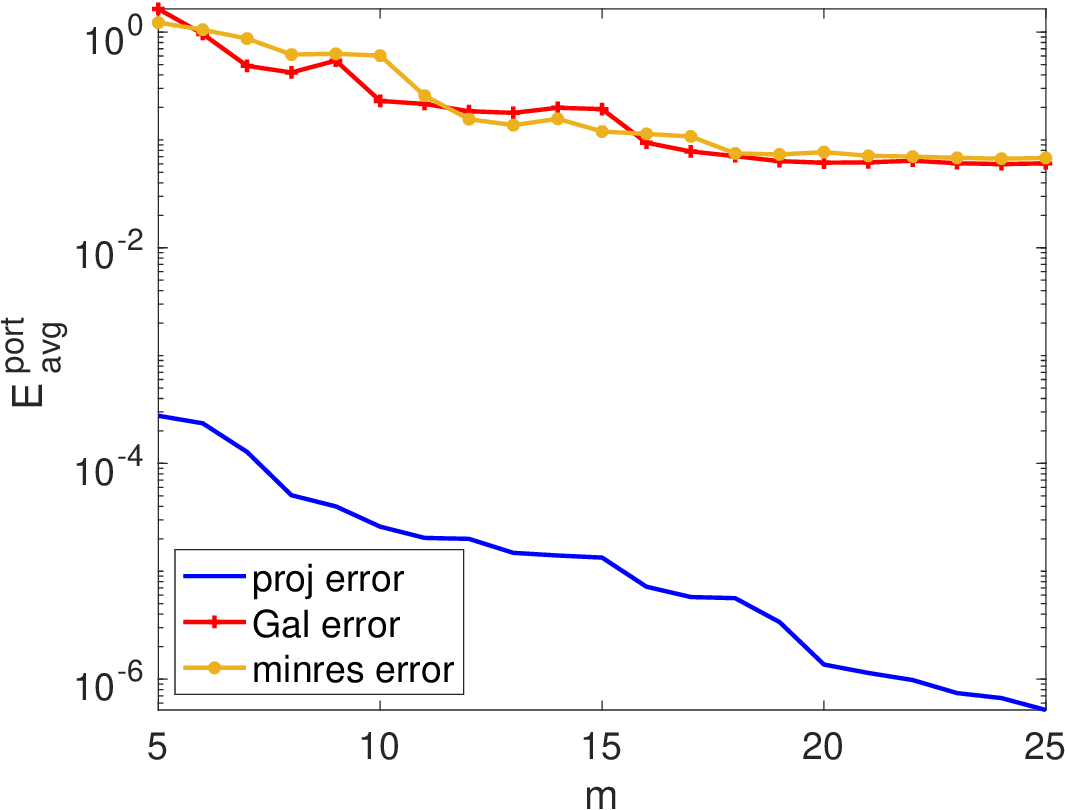}}
~~
  \subfloat[port 3]{\includegraphics[width=0.33\textwidth]{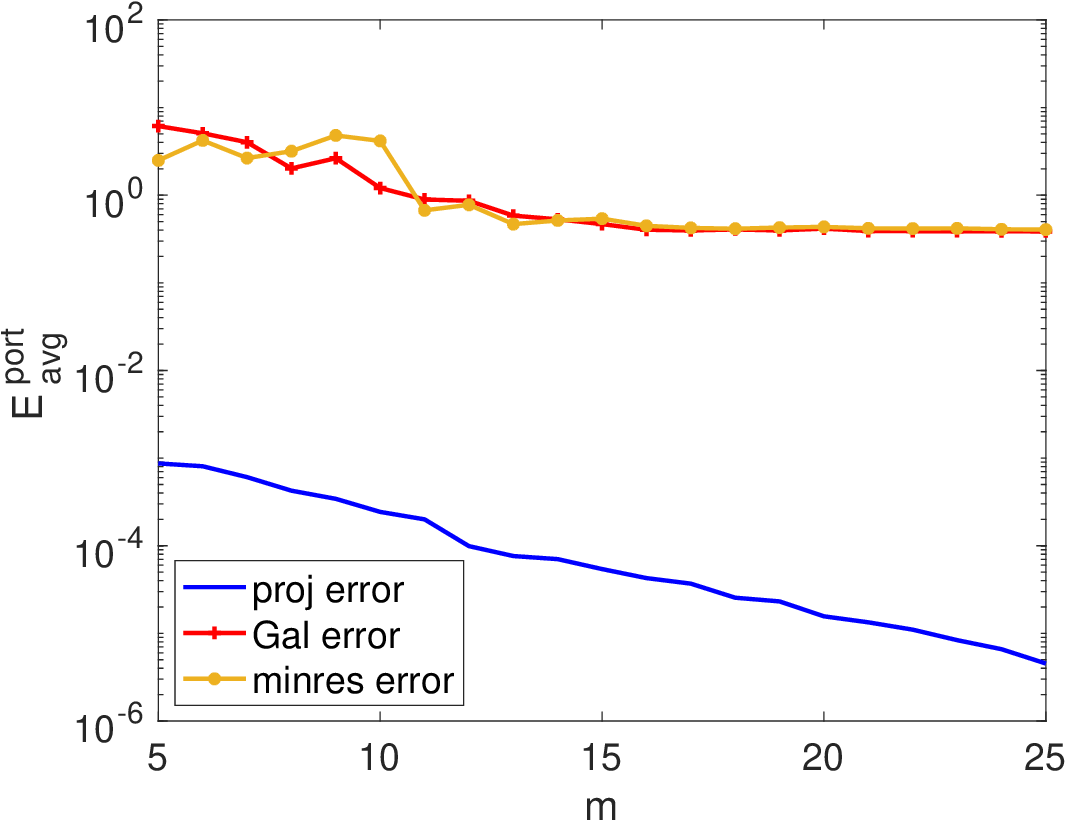}}
  
\caption{localized training for networks with four components. Control prediction error for the three ports (without enrichment).}
\label{fig:rom3_res_gh}
\end{figure}


\begin{figure}[H]
  \centering
  
  \subfloat[port 1]{\includegraphics[width=0.33\textwidth]{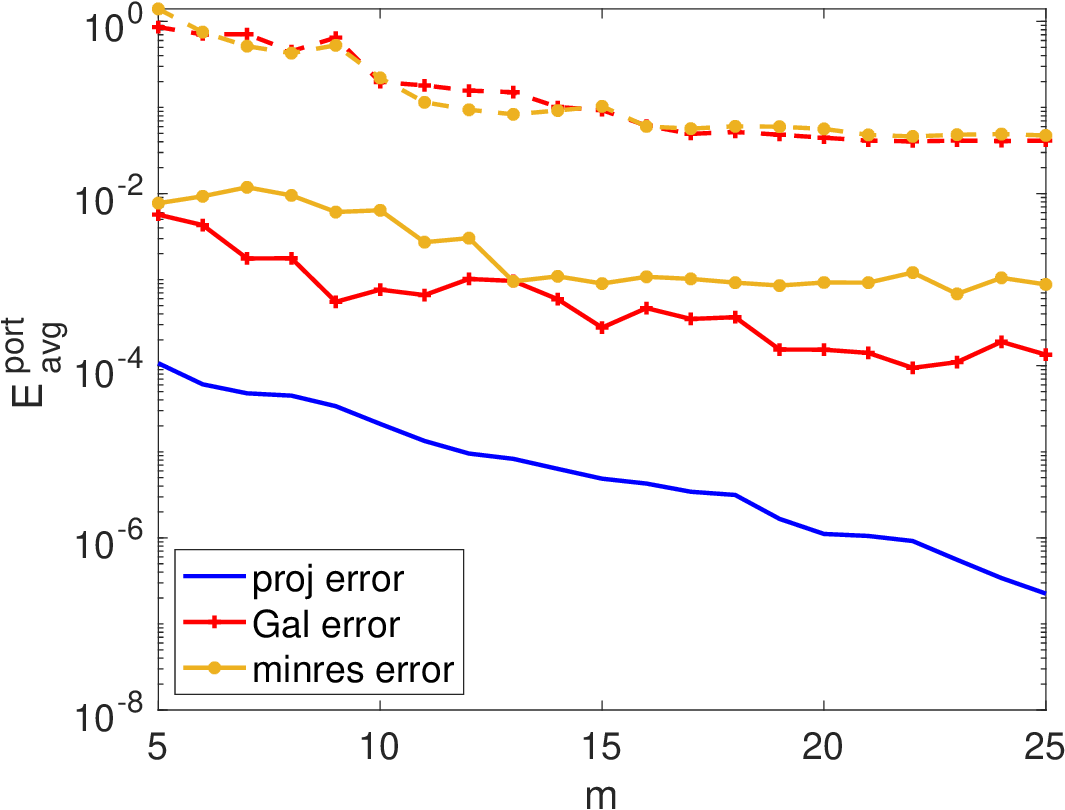}}
~~
  \subfloat[port 2]{\includegraphics[width=0.33\textwidth]{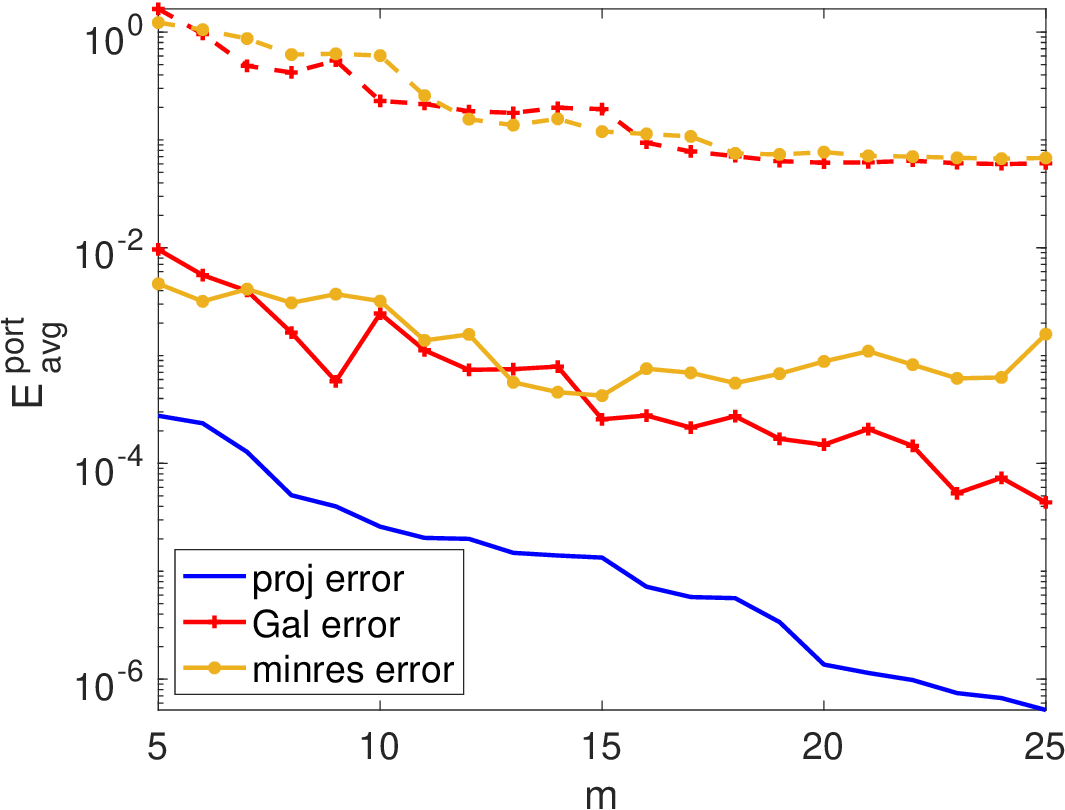}}
~~
  \subfloat[port 3]{\includegraphics[width=0.33\textwidth]{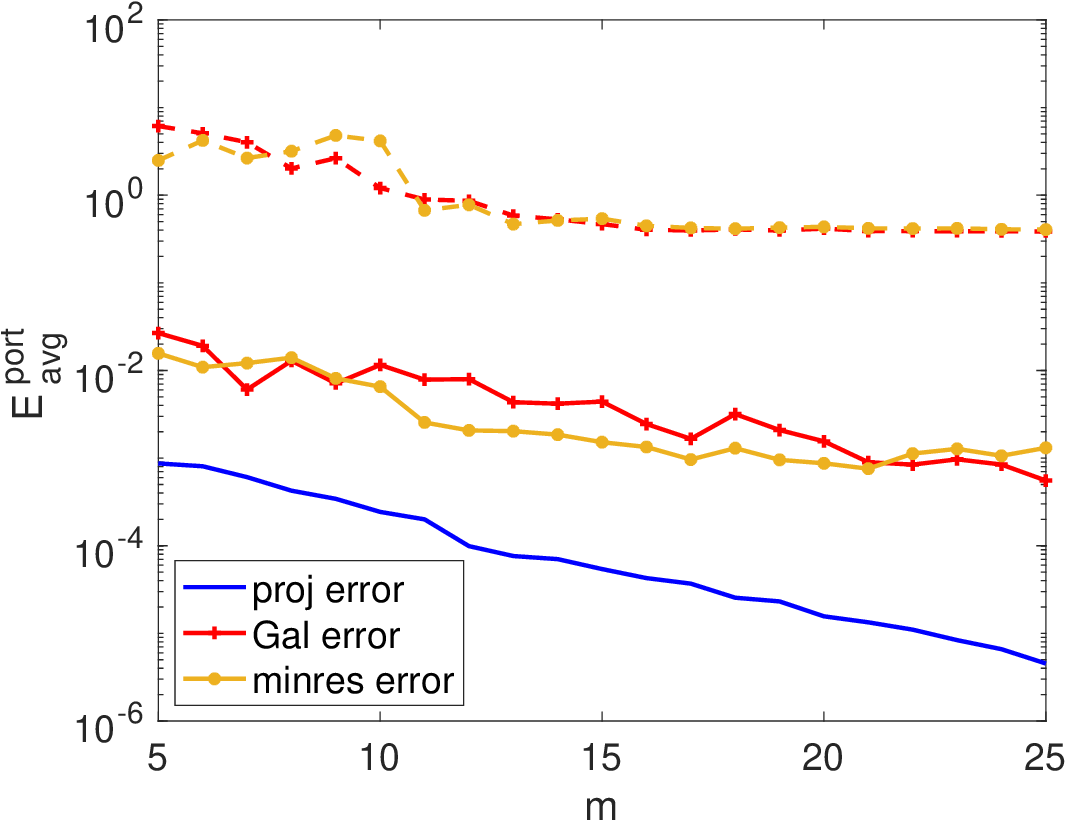}}
  
\caption{localized training for networks with four components. 
Control prediction error for the three ports (with enrichment).}
  \label{fig:rom3_compr_Fgh}
\end{figure}


\subsubsection{Application to networks with ten components}
We apply the full training procedure described in Algorithm \ref{alg:online_enrichment} to 
$n_{\rm test}=10$ randomly selected configurations  with ten components. 
As for the previous test case, we consider independent geometric variations for each instantiated component and we consider ${\rm Re}\in [50,150]$. We only present results for local Galerkin ROMs: the results obtained using minimum residual projection are comparable and are hence omitted.

Figure \ref{fig:boxplot_no_enrichment} shows the local relative error  for the state and for the control, over the test set for the CB-ROM based on   localized training: we use the same dataset considered in 
section \ref{sec:ROM_results_loc_small} with port-based enrichment (cf. section \ref{sec:enrichment_basis}). We observe that the error is roughly $10\%$ for both state and control and does not decrease as we increase the number of modes.

Figure \ref{fig:online_enrichment_boxplot} 
shows the results for the full application of Algorithm \ref{alg:online_enrichment}. We initialize the algorithm with a ROB  of size  $m_0=10$ for the control using localized training;
  we apply the strategy of section \ref{sec:localized_state}, together with 
  port-based enrichment, to find reduced spaces for the state of size $n_0=10+10$ for each component.
 Then, we apply adaptive enrichment:
 we consider $n_{\rm train}^{\rm glo}=50$ global
 randomly-selected  configurations with ten components;
 we mark $m_{\mathbf{s}}=1$ port, and $m_{\mathbf{w}} = 1-3$ components
of each type (specifically, we mark $1$ component with the largest error of each type, along with the $2$ adjacent components of the marked port).
 Then, we augment the bases for state and control with $n^{\rm glo} = m^{\rm glo} =10$ modes.
 We do not  apply the port-based enrichment strategy 
after each iteration of the  adaptive loop (cf. Line 17).
If Figure \ref{fig:online_enrichment_boxplot}, iteration $it$ corresponds to local ROBs of size $m=10 (it+1)$ and $n=10(it+2)$.

We observe that the enrichment strategy clearly enhances the performance of the CB-ROM.
This result empirically demonstrates the importance of adaptive enrichment when dealing with nonlinear PDEs.

\begin{figure}[H]
  \centering
  
  \subfloat[Components.]{\includegraphics[width=0.5\textwidth]{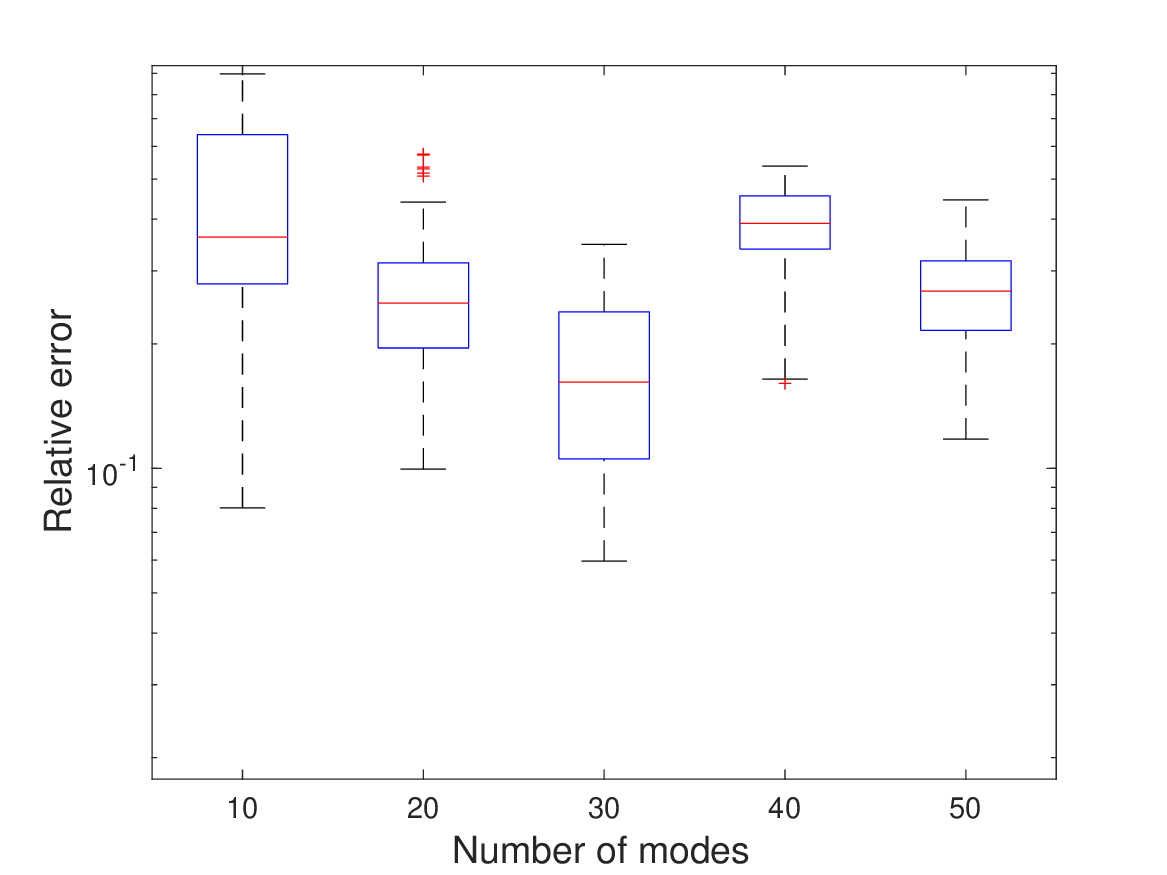}}
  \hfill
  \subfloat[Ports]{\includegraphics[width=0.5\textwidth]{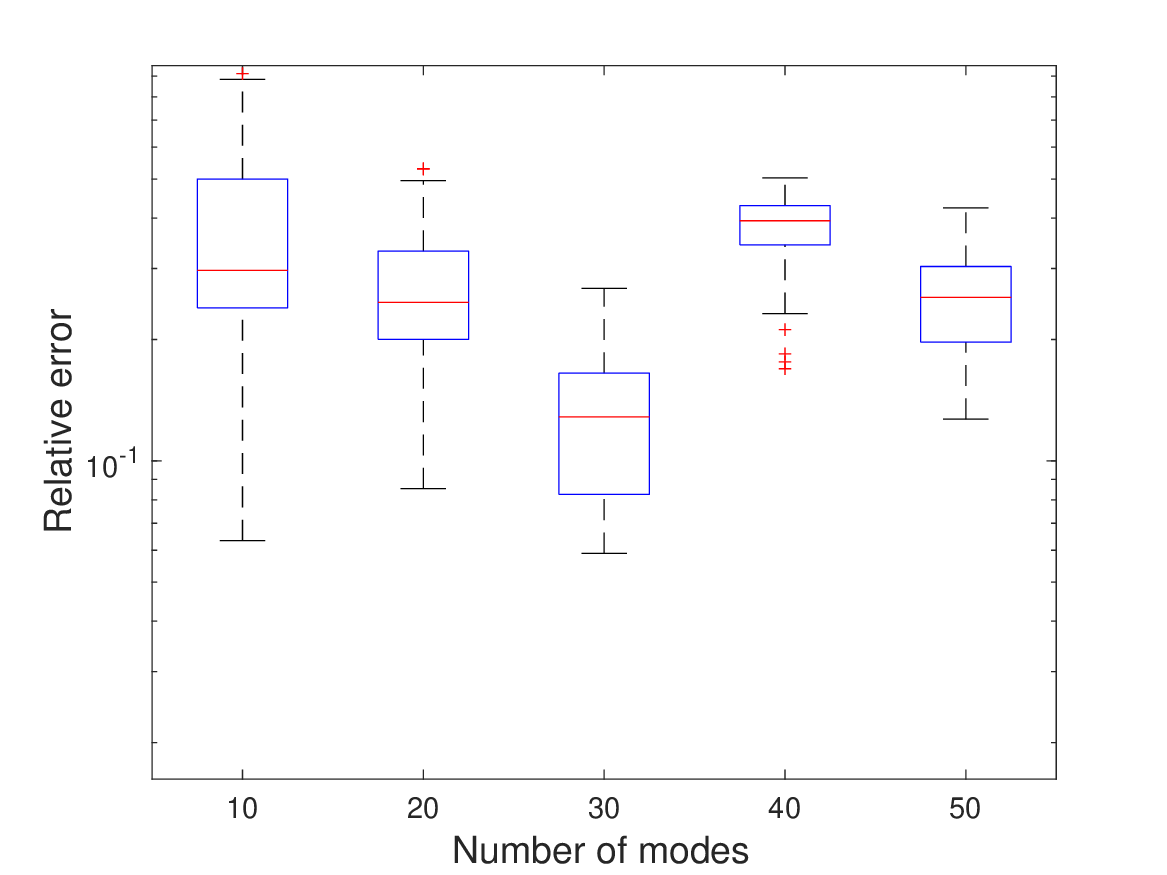}}  
  
\caption{application to networks with ten components. 
Boxplots of the out-of-sample  error
for reduced spaces of several sizes obtained using   localized training  (without adaptive enrichment).}
\label{fig:boxplot_no_enrichment}
\end{figure}

\begin{figure}[H]
  \centering
  
  \subfloat[Components.]{\includegraphics[width=0.5\textwidth]{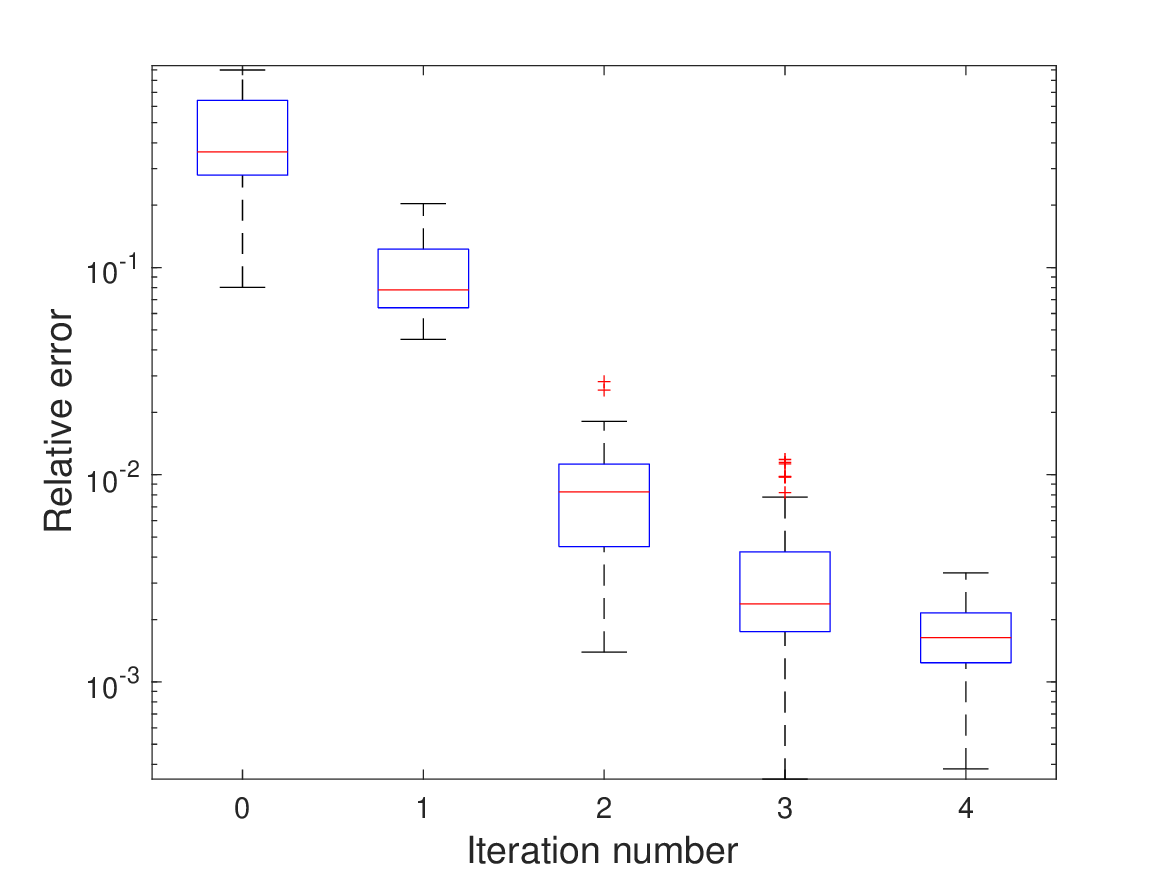}}
  \hfill
  \subfloat[Ports]{\includegraphics[width=0.5\textwidth]{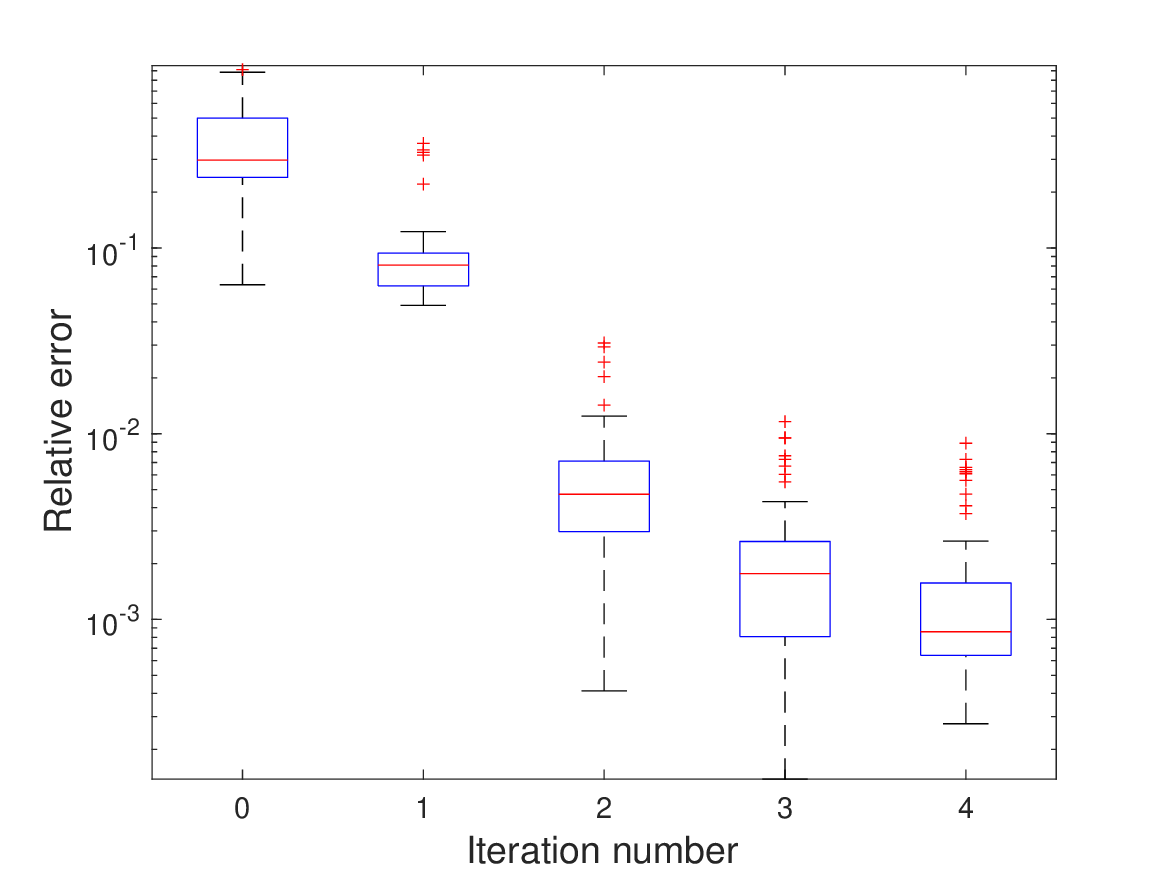}}  
  
\caption{application to networks with ten components. Boxplots of the  out-of-sample  error for several  iterations of Algorithm  \ref{alg:online_enrichment}.}
\label{fig:online_enrichment_boxplot}
\end{figure}

\section{Conclusions}
\label{sec:conclusions}
We developed and numerically validated a component-based model order reduction procedure for incompressible  flows governed by the Navier-Stokes equations. 
Our point of departure is the optimization-based formulation 
of \cite{Gunzburger_Peterson_Kwon_1999}:
we included an additional control variable $h$ for the continuity equation that weakly enforces the continuity of pressure at   interfaces; furthermore, we modified the regularization term to damp spurious oscillations of the control.
We relied on sequential quadratic programming to solve the nonlinear  optimization problem: at each iteration of the procedure, we relied on static condensation of the local degrees of freedom to enable trivial parallelism of the local solves and avoid the introduction of Lagrange multipliers.
We relied on projection-based (Galerkin and Petrov-Galerkin) ROMs to speed up the solution to the local subproblems and we exploited port reduction to reduce the cost of the global problem.
Finally, we adapted the localized training and adaptive enrichment strategy of \cite{Smetana2023} to build the local approximation spaces without the need for expensive global HF solves.

We illustrated the many pieces of our methodology 
for a parametric steady Navier-Stokes problem at moderate ($\mathcal{O}(10^2)$) Reynolds number.
The new DD formulation enables much tighter control of the discrepancy between the FE monolithic solver and the DD solution.
LSPG projection is superior to Galerkin projection in the absence of geometric variability; interestingly, Galerkin and LSPG projection show comparable performance for all the test cases that involve varying geometries.
The port-based enrichment of the state space (cf. section \ref{sec:enrichment_basis}) is key to adequately approximate the control variables.
The localized training strategy discussed in this paper 
leads to poor   reconstructions of the state; adaptive enrichment driven by local error indicators is hence necessary to achieve accurate reconstructions.

In the future, we plan to extend our method to a broader class of problems including multi-physics (fluid-structure interaction) and unsteady problems, and to more challenging (higher-Reynolds, three-dimensional) test cases.
Towards this end, it is of paramount importance to devise effective 
hyper-reduction techniques to speed up local solves and also the assembly of the objective function.
We also plan to combine first-principles models with data-fitted models to enhance the flexibility of the method.

\section*{Acknowledgements}
The work of Lei Zhang is supported by the Fundamental Research Funds for the Central Universities of Tongji University.
\appendix

 \section{Stabilized FE formulation}
\label{sec:stablized_FE}

For completeness, we review the stabilized finite element formulation employed in the numerical results;
we refer to 
\cite{Tezduyar1991,Tezduyar2000} for a thorough  review of stabilized FE methods for incompressible flows.
We denote by $\{  \texttt{D}_k^i \}_k$ the elements of the mesh of $\Omega_i$; 
we further denote by $h_{k,i}$    the size of the $k$-th element of the mesh, and by $r$  the degree of the polynomials.

We consider the residual:
\begin{subequations}
\label{eq:local_problems_stab}
\begin{equation}
\label{eq:local_problems_stab_a}
\mathcal{R}_i^{\rm hf}({\mathbf{u}}_i,\,{{p}}_i, \mathbf{v},\,q )=
\mathcal{R}_i({\mathbf{u}}_i,\,{{p}}_i, \mathbf{v},\,q )
+
\mathcal{R}_i^{\rm supg}({\mathbf{u}}_i,\,{{p}}_i, \mathbf{v} )
+
\mathcal{R}_i^{\rm pspg}({\mathbf{u}}_i,\,{{p}}_i, \mathbf{v} )
+
\mathcal{R}_i^{\rm lsic}({\mathbf{u}}_i, \mathbf{v} ),
\quad
\forall  \, (\mathbf{v},q)\in \mathcal{X}_{i,0}.
\end{equation}
The form $\mathcal{R}_i$ corresponds to the local residual introduced in 
\eqref{eq:local_problems_d}, while the other three terms are designed to improve the stability of the discrete problem.
The form $\mathcal{R}_i^{\rm supg}$ corresponds to the
Streamline upwind Petrov-Galerkin (SUPG,
\cite{brooks1982streamline}) stabilization, which  is designed to handle advection-dominated flows, 
\begin{equation}
\label{eq:local_problems_stab_supg}
\mathcal{R}_i^{\rm supg}({\mathbf{u}},\,{{p}}, \mathbf{v} )=
\sum_k 
\int_{\texttt{D}_k^i}
\left(
\left(
\mathbf{u}\cdot \nabla \right)\mathbf{u}  + \nabla p
 -\nu\Delta \mathbf{u} - \mathbf{f}
\right)
\left( \tau_{\text{supg}}\mathbf{u} \cdot \nabla \mathbf{v} \right)
\, dx;
\end{equation}
the form
$\mathcal{R}_i^{\rm pspg}$ is the Pressure-Stabilized Petrov–Galerkin (PSPG) term \cite{Hughes1986} that is
added to 
the {mass conservation equation}
to eliminate spurious modes in the pressure solution when considering the same polynomial order for pressure and velocity,
\begin{equation}
\label{eq:local_problems_stab_pspg}
\mathcal{R}_i^{\rm pspg}({\mathbf{u}},\,{{p}}, q )
-
\sum_k 
\int_{\texttt{D}_k^i}
\tau_\text{pspg}
\left(
(\mathbf{u}\cdot \nabla)\mathbf{u} + \nabla p
 -\nu\Delta \mathbf{u}
 - \mathbf{f}
\right) \cdot 
\nabla q
\, dx;
\end{equation}
finally, $\mathcal{R}_i^{\rm lsic}$ is the 
least-squares incompressibility constraint (LSIC) stabilization term that is
added to the  momentum equation to improve accuracy and conditioning of the discrete problem \cite{Franca1992, Gelhard2005, Braack2007},
\begin{equation}
\label{eq:local_problems_stab_lsic}
\mathcal{R}_i^{\rm lsic}({\mathbf{u}}, \mathbf{v} )
=
\sum_k 
\int_{\texttt{D}_k^i}
\left(
\nabla\cdot \mathbf{u}
\right)
\tau_{\text{lsic}}\nabla\cdot \mathbf{v}
\, dx.
\end{equation}
\end{subequations}

In the numerical experiments, following
\cite{Peterson2018}, 
we  select the parameters $\tau_{\text{supg}}$, $\tau_{\text{pspg}}$, and $\tau_{\text{lsic}}$   as $\tau_{\text{supg}} = \tau_{\text{pspg}} = \alpha_{\text{supg}} \left[ \left(\frac{2|\mathbf{u}_i|}{h_{k,i}}\right)^2 + 9\left(\frac{4\nu}{h_{k,i}^2}\right)^2 \right]^{-\frac{1}{2}}$, $\tau_{\text{lsic}} = \frac{h_{k,i}^2}{r^2 \tau_{\text{supg}}}$ , where $0\leq \alpha_{\text{supg}} \leq 1$ is a constant that enables the adjustment of $\tau_{\text{supg}}$ for higher-order elements.
In the PTC formulation (cf. \eqref{eq:lsq_discr_final_PTC}), we  
 modify  the coefficients $\tau_{\rm supg}$ and $\tau_{\rm pspg}$  to account for the time step 
 $\tau_{\text{supg}} = \tau_{\text{pspg}} = \alpha_{\text{supg}} \left[ \left(\frac{2}{\Delta t}\right)^2 + \left(\frac{2| \mathbf{u}_i|}{h_{k,i}}\right)^2 + 9\left(\frac{4\nu}{h_{k,i}^2}\right)^2 \right]^{-\frac{1}{2}}$.

\section{Justification of the pressure jump in the minimization formulation}
\label{appendix:pressure_jump}
We consider the configuration depicted in Figure 
\ref{fig:two-subdomains} and we assume that the meshes of $\Omega_1$ and $\Omega_2$ are conforming on $\Gamma_0$. We denote by $\{  \boldsymbol{\phi}_i  \}_{i=1}^{N_{\mathbf{w}}}$ the Lagrangian basis associated with the global space $\mathcal{X}^{\rm hf}$;
we denote by $\mathcal{I}_1, \mathcal{I}_2$ the degrees of freedom associated with the domains $\Omega_1$ and $\Omega_2$, respectively.
We further denote by 
$\mathcal{I}_0 =
\mathcal{I}_1\cap \mathcal{I}_2$ the nodes on the interface $\Gamma_0$;
we introduce the local and global Dirichlet nodes $\mathcal{I}_{1,\rm dir},\mathcal{I}_{2,\rm dir} \subset \{1,\ldots, N_{\mathbf{w}}\}$ and 
 $\mathcal{I}_{\rm dir}=\mathcal{I}_{1,\rm dir} \cap \mathcal{I}_{2,\rm dir}$.
 By construction,  $\mathcal{I}_{\rm dir}\cap \mathcal{I}_0 = \emptyset$ (cf. 
 Figure 
\ref{fig:two-subdomains}).
Finally, we recall the definition of the global  problem
\begin{equation}
\label{eq:global_problem_appendix}
\mathbf{w}^{\rm hf}(1:2)|_{\Gamma_{\rm dir}} = \bs{\Phi}_{\mathbf{u}_{\rm in}},
\quad
\mathcal{R}^{\rm hf}(
\mathbf{w}^{\rm hf},  \mathbf{z} )
= 0
\quad
\forall \,  \mathbf{z} \in \mathcal{X}_{0}^{\rm hf},
\end{equation}
and the two   local problems 
\begin{equation}
\label{eq:local_problems_appendix}
\mathbf{w}_i^{\rm hf}(1:2)|_{\Gamma_{i,\rm dir}} = \bs{\Phi}_{i,\mathbf{u}_{\rm in}},
\quad
\mathcal{R}_i^{\rm hf}(
\mathbf{w}_i^{\rm hf},  \mathbf{z} )
+ (-1)^i \int_{\Gamma_0} \mathbf{s} \cdot    
\mathbf{z}  \, dx = 0
\quad
\forall \,  \mathbf{z} \in \mathcal{X}_{i,0}^{\rm hf},
\quad
i=1,2;
\end{equation}
which depend on the control $\mathbf{s}$.

Since the meshes are conforming, it is possible to verify that 
$$
\mathcal{X}_{i}^{\rm hf}
=
{\rm span} \{ \boldsymbol{\phi}_j|_{\Omega_i}  : j\in   
\mathcal{I}_{i} \},
\quad
i=1,2.
$$
Furthermore, the global residual can be expressed as\footnote{The proof of \eqref{eq:tedious_identity}
exploits the expressions of the residuals
\eqref{eq:local_problems_stab} and 
\eqref{eq:local_problems_d}. We omit the details. We further emphasize that at the right hand side of \eqref{eq:tedious_identity} we should use notation
${\mathcal{R}}_i^{\rm hf}(
\mathbf{w}^{\rm hf}|_{\Omega_i},  \mathbf{z}|_{\Omega_i} )$ for $i=1,2$.
}
\begin{equation}
\label{eq:tedious_identity}
\mathcal{R}^{\rm hf}(
\mathbf{w}^{\rm hf},  \mathbf{z} )
=
{\mathcal{R}}_1^{\rm hf}(
\mathbf{w}^{\rm hf}|_{\Omega_1},  \mathbf{z}|_{\Omega_1} )
+
{\mathcal{R}}_2^{\rm hf}(
\mathbf{w}^{\rm hf}|_{\Omega_2},  \mathbf{z}|_{\Omega_2} )
\quad
\forall \, \mathbf{z} \in \mathcal{X}_{0}^{\rm hf}.
\end{equation}
 Identity \eqref{eq:tedious_identity} implies that 
\begin{equation}
\label{eq:tedious_identity_onemore}
\mathcal{R}_i^{\rm hf}(
\mathbf{w}^{\rm hf},  \boldsymbol{\phi}_j )
=
0
\quad
\forall \, j \in \mathcal{I}_i \setminus \mathcal{I}_0,
\quad
i=1,2;
\end{equation}
therefore, 
since the bilinear form $a(\mathbf{w},  \mathbf{z} ) = \int_{\Gamma_0} \mathbf{w} \cdot   \mathbf{z} \, dx$ is coercive in $\mathcal{Y}:= {\rm span} \{ \boldsymbol{\phi}_j : j\in   
\mathcal{I}_{0} \}$, there exists a unique $\mathbf{s}^{\star} =
{\rm vec}(\mathbf{g}^{\star}, h^{\star})
\in \mathcal{Y}$ such that
\begin{equation}
\label{eq:sstar_appendix}
\mathcal{R}_i^{\rm hf}(
\mathbf{w}^{\rm hf},  \mathbf{z} )
+ (-1)^i \int_{\Gamma_0} \mathbf{s}^{\star}  \cdot    
\mathbf{z}  \, dx = 0
\quad
\forall \,  \mathbf{z} \in \mathcal{X}_{i,0}^{\rm hf},
\end{equation}
for $i=1,2$.

Exploiting the previous discussion, we can prove the following result.
\begin{lemma}
\label{th:tedious_proof}
Let 
$\mathbf{w}^{\rm hf}$ be a  solution to \eqref{eq:global_problem_appendix}. The following hold.
\begin{enumerate}
\item
The triplet 
$(\mathbf{w}^{\rm hf}|_{\Omega_1},
\mathbf{w}^{\rm hf}|_{\Omega_2},
\mathbf{s}^{\star})$ where $\mathbf{s}^{\star}$ satisfies \eqref{eq:sstar_appendix} is a global minimum of 
\eqref{eq:lsq_discr_final_b} for $\delta=0$.
\item
Any global minimum of \eqref{eq:lsq_discr_final_b} for 
$\delta=0$ solves \eqref{eq:global_problem_appendix}; in particular, if the solution 
$\mathbf{w}^{\rm hf}$
to \eqref{eq:global_problem_appendix} is unique, the optimization problem 
\eqref{eq:lsq_discr_final_a} admits a unique solution for $\delta=0$.
\end{enumerate}
\end{lemma}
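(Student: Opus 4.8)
The plan is to exploit that, for $\delta=0$, the objective $\mathcal{F}_\delta(\mathbf{w}_1,\mathbf{w}_2,\mathbf{s}) = \frac12 \|\mathbf{w}_1 - \mathbf{w}_2\|_{L^2(\Gamma_0)}^2$ is nonnegative and vanishes if and only if the traces of $\mathbf{w}_1$ and $\mathbf{w}_2$ (in both velocity and pressure) coincide on $\Gamma_0$. The whole statement then reduces to matching the feasible triplets of zero jump with the solutions of the monolithic problem \eqref{eq:global_problem_appendix}: item one shows the lower bound $0$ is attained, and item two shows that every minimizer arises by gluing a monolithic solution.

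For the first item, I would first check that the candidate triplet is feasible. The restriction $\mathbf{w}^{\rm hf}|_{\Omega_i}$ lies in $\mathcal{X}_i^{\rm hf}$, inherits the Dirichlet datum on $\Gamma_{i,\rm dir}\subset\Gamma_{\rm dir}$, and satisfies the local constraint \eqref{eq:local_problems_appendix} with control $\mathbf{s}^\star$ --- which is exactly \eqref{eq:sstar_appendix}, whose solvability was already established via coercivity of the interface pairing on $\mathcal{Y}$. Since $\mathbf{w}^{\rm hf}\in\mathcal{X}^{\rm hf}$ is a continuous finite element field, its two restrictions share the same trace on $\Gamma_0$, so the jump vanishes and $\mathcal{F}_\delta = 0$ at the triplet. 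As $0$ is a lower bound, the triplet is a global minimizer.

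For the second item, any global minimizer $(\mathbf{w}_1^\star,\mathbf{w}_2^\star,\mathbf{s}^\star)$ attains the value $0$ by item one, hence $\mathbf{w}_1^\star = \mathbf{w}_2^\star$ in $L^2(\Gamma_0)$. Here I would invoke the conforming-mesh assumption ($\Lambda_1^{\rm hf}=\Lambda_2^{\rm hf}$, $\Xi_1^{\rm hf}=\Xi_2^{\rm hf}$): equality of two finite element traces in $L^2(\Gamma_0)$ forces equality of the interface nodal values, so the glued field $\mathbf{w}$ with $\mathbf{w}|_{\Omega_i}=\mathbf{w}_i^\star$ is genuinely continuous across $\Gamma_0$ and belongs to $\mathcal{X}^{\rm hf}$; the global Dirichlet condition follows by patching the local ones. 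To recover the residual equation I would use identity \eqref{eq:tedious_identity} to split $\mathcal{R}^{\rm hf}(\mathbf{w},\mathbf{z})$ into the two local residuals $\mathcal{R}_i^{\rm hf}(\mathbf{w}_i^\star,\mathbf{z}|_{\Omega_i})$ for $\mathbf{z}\in\mathcal{X}_0^{\rm hf}$ (noting $\mathbf{z}|_{\Omega_i}\in\mathcal{X}_{i,0}^{\rm hf}$ since $\Gamma_{i,\rm dir}\subset\Gamma_{\rm dir}$), and then substitute the constraints to replace each local residual by $-(-1)^i\int_{\Gamma_0}\mathbf{s}^\star\cdot\mathbf{z}\,dx$. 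The two interface contributions carry opposite signs, share the same single control $\mathbf{s}^\star$, and are tested against the continuous trace $\mathbf{z}|_{\Gamma_0}$; they therefore cancel, giving $\mathcal{R}^{\rm hf}(\mathbf{w},\mathbf{z})=0$ and showing $\mathbf{w}$ solves \eqref{eq:global_problem_appendix}.

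Finally, for the uniqueness addendum, uniqueness of $\mathbf{w}^{\rm hf}$ pins down the state pair $(\mathbf{w}_1^\star,\mathbf{w}_2^\star)$; the control is then uniquely determined because the local constraints fix $\int_{\Gamma_0}\mathbf{s}^\star\cdot\mathbf{z}\,dx$ for every $\mathbf{z}\in\mathcal{X}_{i,0}^{\rm hf}$, whose traces span $\mathcal{S}^{\rm hf}$ (recall $\mathcal{I}_{\rm dir}\cap\mathcal{I}_0=\emptyset$), and the $L^2(\Gamma_0)$ mass pairing on $\mathcal{S}^{\rm hf}$ is positive definite. I expect the main obstacle to be the gluing step: rigorously passing from the $L^2(\Gamma_0)$ identity of the traces to membership of $\mathbf{w}$ in the global continuous space $\mathcal{X}^{\rm hf}$, and the legitimacy of restricting global test functions subdomain-wise --- both of which hinge on the conformity of the interface meshes. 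The residual manipulation itself is then a short bookkeeping exercise built on \eqref{eq:tedious_identity}.
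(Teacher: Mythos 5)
Your proposal is correct and follows essentially the same route as the paper's own proof: feasibility plus continuity of $\mathbf{w}^{\rm hf}$ gives a zero-objective point for item 1, and for item 2 the zero jump lets you glue the local states into a member of $\mathcal{X}^{\rm hf}$, split the global residual via \eqref{eq:tedious_identity}, cancel the opposite-signed interface terms, and conclude uniqueness of the control from the positive-definiteness of the $L^2(\Gamma_0)$ pairing on the interface trace space --- exactly the ingredients the paper uses. If anything, you are slightly more explicit than the paper on the gluing step (nodal equality from the conforming-mesh assumption) and on why the constraints pin down $\mathbf{s}$, which the paper compresses into citing the uniqueness of the solution to \eqref{eq:sstar_appendix}.
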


\begin{proof}
Equation \eqref{eq:sstar_appendix} implies that
the triplet $(\mathbf{w}^{\rm hf}|_{\Omega_1},
\mathbf{w}^{\rm hf}|_{\Omega_2},
\mathbf{s}^{\star})$ satisfies the constraints of \eqref{eq:lsq_discr_final_a}
(cf. \eqref{eq:local_problems_appendix});
since $\mathbf{w}^{\rm hf}$ is continuous, the objective function of \eqref{eq:lsq_discr_final_a}
(cf. \eqref{eq:lsq_discr_final_b})
 is equal to zero for $\delta=0$. Since the function  \eqref{eq:lsq_discr_final_b}
 is non-negative, we conclude that $(\mathbf{w}^{\rm hf}|_{\Omega_1},
\mathbf{w}^{\rm hf}|_{\Omega_2},
\mathbf{s}^{\star})$ is a global minimum of  \eqref{eq:lsq_discr_final_a}.

Exploiting the first part of the proof, we find that any
 global minimum
$(\mathbf{w}_1,
\mathbf{w}_2, \mathbf{s})$ of \eqref{eq:lsq_discr_final_a}
 satisfies $\mathcal{F}_{\delta=0}\left(
 {\mathbf{w}}_1, {\mathbf{w}}_2,\mathbf{s}
 \right)$
$\mathcal{F}_{\delta=0} (\mathbf{w}^{\rm hf}|_{\Omega_1},
\mathbf{w}^{\rm hf}|_{\Omega_2},
\mathbf{s}^{\star})  =0$. This implies that the function
 $\mathbf{w}:\Omega \to \mathbb{R}^3$ such that
 $\mathbf{w}|_{\Omega_1}={\mathbf{w}}_1$ and 
  $\mathbf{w}|_{\Omega_2}={\mathbf{w}}_2$ is continuous and belongs to $\mathcal{X}^{\rm hf}$.
Recalling
\eqref{eq:local_problems_appendix}, we have that $\mathbf{w}$ satisfies 
 $\mathbf{w}(1:2)|_{\Gamma_{\rm dir}} = \bs{\Phi}_{\mathbf{u}_{\rm in}}$.
 Furthermore, 
 since $ \mathbf{z}|_{\Omega_i} \in \mathcal{X}_{i,0}$ for any
  $\mathbf{z}\in \mathcal{X}^{\rm hf}$, 
  we have that
$$
\mathcal{R}^{\rm hf}(
\mathbf{w},  \mathbf{z} )
\overset{\eqref{eq:tedious_identity}}{=}
\mathcal{R}_1^{\rm hf}(
\mathbf{w}_1,   \mathbf{z}|_{\Omega_1} )
+
\mathcal{R}_2^{\rm hf}(
\mathbf{w}_2,   \mathbf{z}|_{\Omega_2} )
\overset{\eqref{eq:local_problems_appendix}}{=}
-\int_{\Gamma_0} \mathbf{s}\cdot    \mathbf{z} \, dx
+\int_{\Gamma_0} \mathbf{s}\cdot    \mathbf{z} \, dx
=0,
$$
which is \eqref{eq:global_problem_appendix}.
We conclude that $\mathbf{w}$ solves \eqref{eq:global_problem_appendix}.
If the solution to \eqref{eq:global_problem_appendix} is unique, exploiting the previous argument, 
any solution 
$(\mathbf{w}_1,\mathbf{w}_2, \mathbf{s})$ should satisfy 
 $\mathbf{w}_1={\mathbf{w}}^{\rm hf}|_{\Omega_1}$ and 
  $\mathbf{w}_2={\mathbf{w}}^{\rm hf}|_{\Omega_2}$. Furthermore, since the solution to \eqref{eq:sstar_appendix} is unique, we also find $\mathbf{s}=\mathbf{s}^\star$. In conclusion,   \eqref{eq:lsq_discr_final_a} has a unique global minimum.
\end{proof}

Lemma \ref{th:tedious_proof} illustrates the connection between the monolithic problem and the solution to the optimization problem 
\eqref{eq:lsq_discr_final_a}; the well-posedness analysis in \cite{Gunzburger_Lee_2000} shows that in the continuous limit (i.e., $N_{\mathbf{w}}\to \infty$)  $h^{\star}=0$; nevertheless, in general  $h^{\star}\neq 0$ for finite-dimensional discretizations. To illustrate this fact, we consider the solution to the Stokes problem (see Figure \ref{fig:two-subdomains} for the definitions of the boundary subdomains)
$$
\left\{
\begin{array}{ll}
-  \Delta \mathbf{u}   + \nabla p
= 
{\rm vec}(x_1, \cos(x_2^2) )
& {\rm in} \; \Omega=(0,1)^2, \\
\nabla \cdot \mathbf{u} = 0 
& {\rm in} \; \Omega, \\
\mathbf{u}|_{\Gamma_{\rm dir}}={\rm vec}((1-x_2)x_2, 0 ),
\;\;
\mathbf{u}|_{\Gamma_{\rm dir}^0}=0,
\;\;
\left(
\nabla \mathbf{u} - p\mathbf{I}
\right) \mathbf{n}
|_{\Gamma_{\rm neu}}=0,
&
\\
\end{array}
\right.
$$
based on a P3-P2 Taylor-Hood discretization for three meshes of increasing size.
Figure \ref{fig:stokes_model_problem}(a) shows the final mesh used for computations  whereas the blue dots indicate the  interface $\Gamma_0$;
Figure \ref{fig:stokes_model_problem}(b) shows the behavior of $h^{\star}$ for three meshes with $4^2, 8^2, 16^2$ global elements: as expected, as we increase the size of the mesh, the magnitude of 
$h^{\star}$  decreases.

\begin{figure}[H]
\centering
\subfloat[]{
\includegraphics[width=8.3cm]{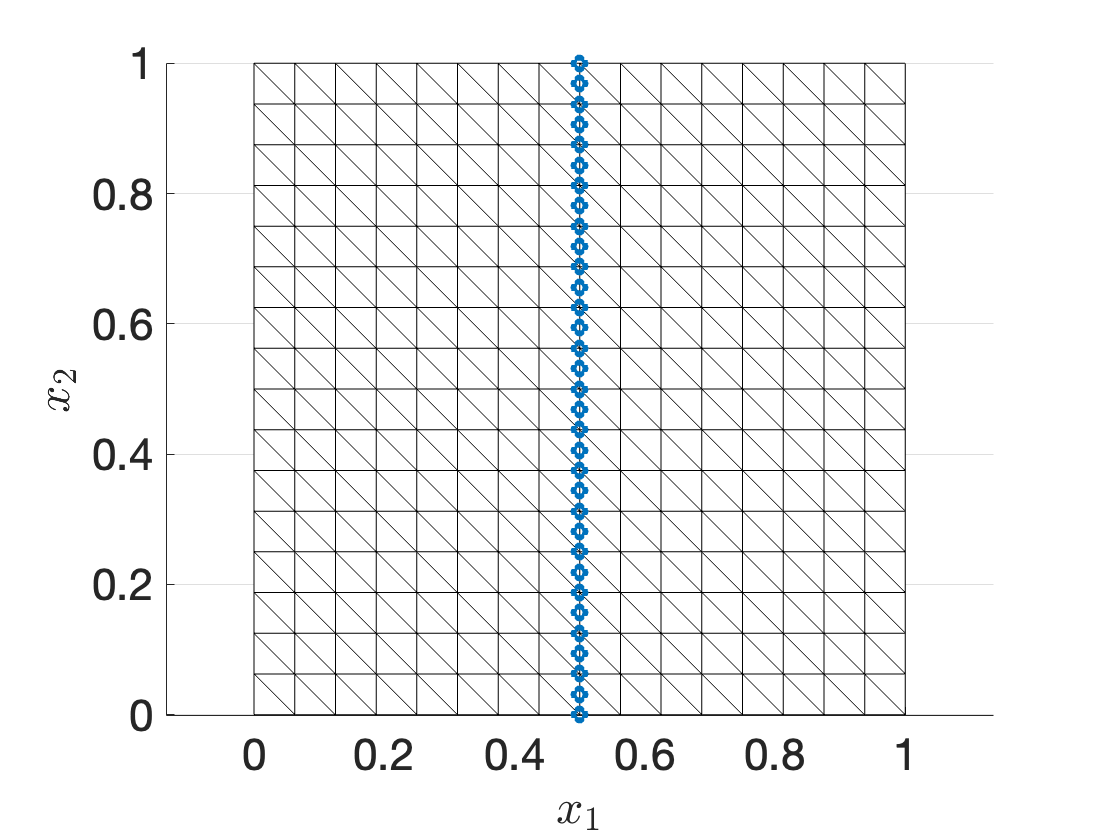}}
~~
\subfloat[]{
\includegraphics[width=7.3cm]{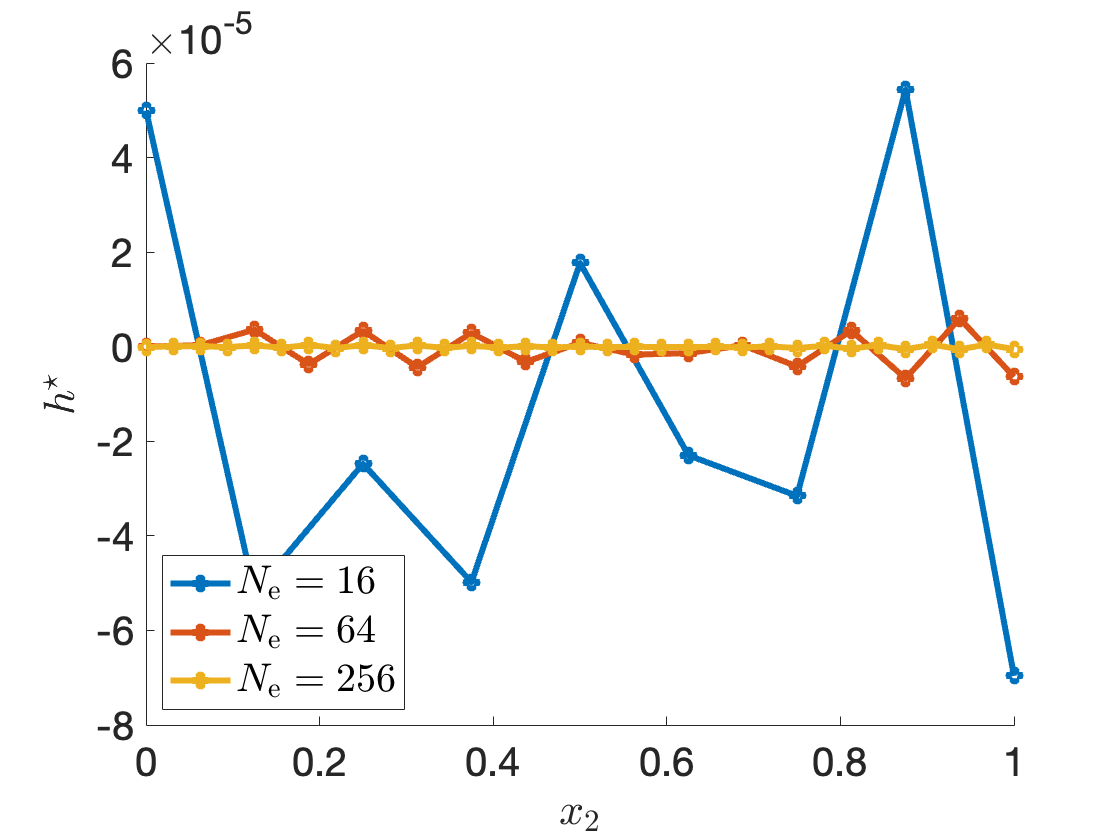}}

\caption{justification of the pressure jump; Stokes model problem.}
\label{fig:stokes_model_problem}
\end{figure}

\section{Justification of the enrichment strategy}
\label{appendix:enrichment}

We consider the algebraic problem:
\begin{equation}
\label{eq:toy_appendix1}
\min_{\mathbf{w}\in \mathbb{R}^N, \mathbf{s}\in \mathbb{R}^M} \Big| \mathbf{C} \mathbf{w} - \mathbf{b}  \Big|
\quad
{\rm s.t.} \;\;
\mathbf{A} \mathbf{w} +
\mathbf{B} \mathbf{s}  =
  \mathbf{f},
\end{equation}
with $\mathbf{A}\in \mathbb{R}^{N\times N}$, $\mathbf{B}\in \mathbb{R}^{N\times M}$, $\mathbf{C}\in \mathbb{R}^{M\times N}$,
$ \mathbf{b} \in \mathbf{R}^M$, 
$ \mathbf{f} \in \mathbf{R}^N$  
 and $N>M$.
 If $\mathbf{A}$ is full rank,
any solution $(\mathbf{w}^{\star}, \mathbf{s}^{\star})$ to
\eqref{eq:toy_appendix1}
 satisfies
 $\mathbf{w}^{\star} = \mathbf{A}^{-1} \left(
 \mathbf{f}  - \mathbf{B} 
 \mathbf{s}^{\star} 
 \right)$ and 
 $ \mathbf{s}^{\star}  = {\rm arg}
 \min_{\mathbf{s}\in \mathbb{R}^M} \Big| \mathbf{D} \mathbf{s} - \mathbf{c}  \Big|$,
 with 
 $\mathbf{D}= \mathbf{C} \mathbf{A}^{-1} \mathbf{B}$ and 
 $\mathbf{c} = \mathbf{b} - \mathbf{A}^{-1}   \mathbf{f} $. Therefore, provided that $\mathbf{C}$ is full rank, \eqref{eq:toy_appendix1} is well-posed if and only if $\mathbf{A}^{-1} \mathbf{B}$
is full rank.

Let $\mathbf{Z}
=[\boldsymbol{\zeta}_1,\ldots,\boldsymbol{\zeta}_n]
\in \mathbb{R}^{N\times n}$,   $\mathbf{W}
=[\boldsymbol{\eta}_1,\ldots,\boldsymbol{\eta}_m]
\in \mathbb{R}^{M\times m}$ and
$\mathbf{Y}\in \mathbb{R}^{N\times n}$ be orthogonal matrices with $n<N$ and $m<M$; exploiting these definitions, we define the projected problem
 \begin{equation}
\label{eq:toy_appendix1_projected}
\min_{\bs{\alpha}\in \mathbb{R}^n, \bs{\beta}\in \mathbb{R}^m} \Big| \overline{\mathbf{C}} \bs{\alpha} - \mathbf{b}  \Big|
\quad
{\rm s.t.} \;\;
\overline{\mathbf{A}}  \bs{\alpha} +
\overline{\mathbf{B}}  \bs{\beta}  =
\overline{\mathbf{f}},
\end{equation}
 with $\overline{\mathbf{C}} = \mathbf{C} \mathbf{Z}$, 
 $\overline{\mathbf{A}} =\mathbf{Y}^\top \mathbf{A} \mathbf{Z}$, 
  $\overline{\mathbf{B}} =\mathbf{Y}^\top \mathbf{B} \mathbf{W}$ and 
   $\overline{\mathbf{f}} =\mathbf{Y}^\top \mathbf{f}$. It is straightforward to prove the following result:
   here, ${\rm col}(\mathbf{X})$ denotes the linear space spanned by the columns of the matrix $\mathbf{X}$, while 
   ${\rm orth}(\mathbf{X})$ is the orthogonal matrix that is obtained by orthogonalizing the columns of $\mathbf{X}$.
   
\begin{lemma}
\label{th:silly_result}
Let $ {\rm col}(\mathbf{A}^{-1} 
\mathbf{B} \mathbf{W}   )    \subset   {\rm col}(\mathbf{Z})$ and let $\mathbf{Y} = {\rm orth}(  \mathbf{A} \mathbf{Z}    )$. Then, 
$\overline{\mathbf{A}}^{-1} \overline{\mathbf{B}}$ is full rank, and \eqref{eq:toy_appendix1_projected} is well-posed.
\end{lemma}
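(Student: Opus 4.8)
The plan is to reduce the statement to two observations: that the choice $\mathbf{Y} = {\rm orth}(\mathbf{A}\mathbf{Z})$ forces the reduced state matrix $\overline{\mathbf{A}}$ to be invertible, and that the inclusion ${\rm col}(\mathbf{A}^{-1}\mathbf{B}\mathbf{W}) \subset {\rm col}(\mathbf{Z})$ forces the reduced sensitivity $\overline{\mathbf{A}}^{-1}\overline{\mathbf{B}}$ to coincide with the coordinate representation of the enriching columns in the basis $\mathbf{Z}$. Once these are in hand, well-posedness of \eqref{eq:toy_appendix1_projected} follows by applying, verbatim, the reduction already carried out for \eqref{eq:toy_appendix1} to the projected matrices.

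First I would prove that $\overline{\mathbf{A}} = \mathbf{Y}^\top \mathbf{A}\mathbf{Z}$ is invertible. Since $\mathbf{A}$ is full rank and $\mathbf{Z}$ has orthonormal columns (hence full column rank $n$), the matrix $\mathbf{A}\mathbf{Z}$ has full column rank $n$; thus $\mathbf{Y} = {\rm orth}(\mathbf{A}\mathbf{Z})$ is an orthonormal basis of ${\rm col}(\mathbf{A}\mathbf{Z})$, and the orthogonalization yields a factorization $\mathbf{A}\mathbf{Z} = \mathbf{Y}\mathbf{R}$ with $\mathbf{R}\in\mathbb{R}^{n\times n}$ invertible. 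Substituting and using $\mathbf{Y}^\top\mathbf{Y} = \mathbf{I}_n$ gives $\overline{\mathbf{A}} = \mathbf{Y}^\top\mathbf{Y}\mathbf{R} = \mathbf{R}$, so $\overline{\mathbf{A}}$ is invertible. This is the single place where the particular test-space choice enters, and I regard it as the crux of the result.

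Next I would exploit the inclusion hypothesis. Since ${\rm col}(\mathbf{A}^{-1}\mathbf{B}\mathbf{W}) \subset {\rm col}(\mathbf{Z})$ and $\mathbf{Z}$ has full column rank, there exists a unique $\mathbf{K}\in\mathbb{R}^{n\times m}$ with $\mathbf{A}^{-1}\mathbf{B}\mathbf{W} = \mathbf{Z}\mathbf{K}$, equivalently $\mathbf{B}\mathbf{W} = \mathbf{A}\mathbf{Z}\mathbf{K}$. Left-multiplying by $\mathbf{Y}^\top$ gives $\overline{\mathbf{B}} = \mathbf{Y}^\top\mathbf{A}\mathbf{Z}\mathbf{K} = \overline{\mathbf{A}}\mathbf{K}$, and therefore $\overline{\mathbf{A}}^{-1}\overline{\mathbf{B}} = \mathbf{K}$. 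Because $\mathbf{Z}$ is injective, the rank of $\mathbf{K}$ equals the rank of $\mathbf{Z}\mathbf{K} = \mathbf{A}^{-1}\mathbf{B}\mathbf{W}$, which is $m$ since $\mathbf{A}^{-1}$ is invertible and $\mathbf{W}$ has full column rank (this is precisely the full-rank property inherited from well-posedness of the unreduced problem). Hence $\overline{\mathbf{A}}^{-1}\overline{\mathbf{B}} = \mathbf{K}$ has full column rank.

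Finally, to conclude well-posedness of \eqref{eq:toy_appendix1_projected}, I would mimic the reduction used for \eqref{eq:toy_appendix1}: invertibility of $\overline{\mathbf{A}}$ lets me eliminate $\bs{\alpha} = \overline{\mathbf{A}}^{-1}(\overline{\mathbf{f}} - \overline{\mathbf{B}}\bs{\beta})$ from the constraint, reducing the objective to an unconstrained least-squares problem in $\bs{\beta}$ governed by $\overline{\mathbf{D}} = \overline{\mathbf{C}}\,\overline{\mathbf{A}}^{-1}\overline{\mathbf{B}} = \overline{\mathbf{C}}\mathbf{K}$; provided $\overline{\mathbf{C}}$ is full rank, the full column rank of $\mathbf{K}$ makes $\overline{\mathbf{D}}$ full column rank, so the reduced least-squares problem has a unique minimizer. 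The main obstacle is conceptual rather than computational: one must recognize that the two hypotheses of the lemma are tailored exactly so that $\overline{\mathbf{A}}$ inherits invertibility and $\overline{\mathbf{A}}^{-1}\overline{\mathbf{B}}$ collapses to the injective coordinate map $\mathbf{K}$; the remaining algebra is routine.
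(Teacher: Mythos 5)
Your proof is correct, but it follows a genuinely different route from the paper's. The paper proves both facts by contradiction using orthogonality alone: if $\overline{\mathbf{A}}\bs{\alpha}=0$, then writing $\mathbf{A}\mathbf{Z}\bs{\alpha}=\mathbf{Y}\bs{\beta}$ gives $0=\bs{\beta}^\top\overline{\mathbf{A}}\bs{\alpha}=|\mathbf{A}\mathbf{Z}\bs{\alpha}|^2$, contradicting invertibility of $\mathbf{A}$; the same device applied to $\overline{\mathbf{B}}\bs{\beta}=0$ (using the inclusion hypothesis to write $\mathbf{B}\mathbf{W}\bs{\beta}=\mathbf{A}\mathbf{Z}\bs{\alpha}'=\mathbf{Y}\bs{\alpha}$) yields $|\mathbf{B}\mathbf{W}\bs{\beta}|^2=0$, contradicting full rank of $\mathbf{B}$. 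You instead argue constructively: the QR-type factorization $\mathbf{A}\mathbf{Z}=\mathbf{Y}\mathbf{R}$ gives $\overline{\mathbf{A}}=\mathbf{R}$ at once, and the inclusion hypothesis produces an explicit coordinate matrix $\mathbf{K}$ with $\mathbf{A}^{-1}\mathbf{B}\mathbf{W}=\mathbf{Z}\mathbf{K}$, whence $\overline{\mathbf{B}}=\overline{\mathbf{A}}\mathbf{K}$ and $\overline{\mathbf{A}}^{-1}\overline{\mathbf{B}}=\mathbf{K}$, whose rank equals that of $\mathbf{A}^{-1}\mathbf{B}\mathbf{W}$. Both arguments rest on the same implicit hypotheses (invertibility of $\mathbf{A}$, full column rank of $\mathbf{B}$ inherited from well-posedness of \eqref{eq:toy_appendix1}, and full rank of the projected observation matrix for the final static-condensation step), so neither is more general; what your version buys is interpretability: the identity $\overline{\mathbf{A}}^{-1}\overline{\mathbf{B}}=\mathbf{K}$ exhibits the reduced sensitivity operator as exactly the coordinate representation of the full-order sensitivities $-\mathbf{A}^{-1}\mathbf{B}\mathbf{W}$ in the enriched trial basis, which makes the link with the enrichment snapshots \eqref{eq:enriching_modes} of section \ref{sec:enrichment_basis} transparent, whereas the paper's symmetric contradiction argument is slightly shorter and avoids invoking the factorization underlying ${\rm orth}(\cdot)$.
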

\begin{proof}
We first  prove that $\overline{\mathbf{A}}\in \mathbb{R}^{n\times n}$ is invertible.
By contradiction, there exists $\bs{\alpha}\in \mathbb{R}^n$ such that 
$\overline{\mathbf{A}}\bs{\alpha}=0$. Since $\mathbf{Y} = {\rm orth}(  \mathbf{A} \mathbf{Z}    )$, there exists $\bs{\beta}\in \mathbb{R}^n$ such that
$\mathbf{A}  \mathbf{Z} \bs{\alpha} =  \mathbf{Y} \bs{\beta}$.
We hence find 
$$
0 = \bs{\beta}^\top \overline{\mathbf{A}} \bs{\alpha}
=
( \mathbf{Y} \bs{\beta} )^\top
 \mathbf{A} \mathbf{Z} 
 \bs{\alpha}
=
| \mathbf{A} \mathbf{Z}   \bs{\alpha} |^2.
$$
The latter implies that $\mathbf{Z}   \bs{\alpha}$  is a non-trivial element of  the kernel   of $\mathbf{A}$: this is in contradiction with the hypothesis that $\mathbf{A}$ is invertible.

Exploiting the same  argument, we prove that $\overline{\mathbf{B}}$ is full rank.
By contradiction, there exists $\bs{\beta}\in \mathbb{R}^m$ such that 
$\overline{\mathbf{B}}\bs{\beta}=0$.
Since ${\rm col}(\mathbf{Y}) = {\rm col}(\mathbf{A} \mathbf{Z})$ and 
${\rm col}(\mathbf{A}^{-1}\mathbf{B} \mathbf{W})  \subset {\rm col}( \mathbf{Z})$, there exist 
 $\bs{\alpha}, \bs{\alpha}' \in \mathbb{R}^n$ such that
$\mathbf{B}  \mathbf{W} \bs{\beta} = 
\mathbf{A}  \mathbf{Z} \bs{\alpha}' =  \mathbf{Y} \bs{\alpha}$.
We hence find 
$$
0 = \bs{\alpha}^\top \overline{\mathbf{B}} \bs{\beta}
=
( \mathbf{Y} \bs{\alpha} )^\top
 \mathbf{B} \mathbf{W} 
\bs{\beta}
=
| \mathbf{B} \mathbf{W}   \bs{\beta} |^2.
$$
The latter implies that $\mathbf{W}   \bs{\beta} $  is a non-trivial element of  the kernel sof  $\mathbf{B}$: this is in contradiction with the hypothesis that $\mathbf{B}$ is  full-rank.
\end{proof}
 
Lemma \ref{th:silly_result} provides a rigorous justification of the enrichment strategy in section \ref{sec:enrichment_basis}. 
The matrix $\widetilde{\mathbf{W}} =  - 
\mathbf{A}^{-1} \mathbf{B} \mathbf{W}$ corresponds to the derivative of the state $\mathbf{w}$ with respect to the control $\widehat{\mathbf{s}} = \mathbf{W} \bs{\beta}$; the columns 
$\widetilde{\mathbf{w}}_1,\ldots,  \widetilde{\mathbf{w}}_m$
of the matrix 
 $\widetilde{\mathbf{W}}$ satisfy 
 $$
 \mathbf{A} \widetilde{\mathbf{w}}_k + \mathbf{B} \bs{\eta}_k = 0,
 \quad
 k=1,\ldots,m, 
 $$
 which corresponds to \eqref{eq:enriching_modes}. Similarly, as discussed in 
 \cite{taddei2021space,taddei2021discretize}, the choice of the test space in  Lemma \ref{th:silly_result}  is consistent with 
\eqref{eq:test_space_a}.
 
\bibliographystyle{abbrv}	
\bibliography{references}

\end{document}